\newtheorem{theorem}{Theorem}
\newtheorem{proposition}[theorem]{Proposition}
\newtheorem{corollary}[theorem]{Corollary}
\newtheorem{lemma}[theorem]{Lemma}
\newtheorem{remark}[theorem]{Remark}
\newtheorem*{definition*}{Defintion}
\definecolor{orcidlogocol}{HTML}{A6CE39}
\tikzset{
	orcidlogo/.pic={
		\fill[orcidlogocol] svg{M256,128c0,70.7-57.3,128-128,128C57.3,256,0,198.7,0,128C0,57.3,57.3,0,128,0C198.7,0,256,57.3,256,128z};
		\fill[white] svg{M86.3,186.2H70.9V79.1h15.4v48.4V186.2z}
		svg{M108.9,79.1h41.6c39.6,0,57,28.3,57,53.6c0,27.5-21.5,53.6-56.8,53.6h-41.8V79.1z M124.3,172.4h24.5c34.9,0,42.9-26.5,42.9-39.7c0-21.5-13.7-39.7-43.7-39.7h-23.7V172.4z}
		svg{M88.7,56.8c0,5.5-4.5,10.1-10.1,10.1c-5.6,0-10.1-4.6-10.1-10.1c0-5.6,4.5-10.1,10.1-10.1C84.2,46.7,88.7,51.3,88.7,56.8z};
	}
}
\newcommand\orcidicon[1]{\href{https://orcid.org/#1}{\mbox{\scalerel*{
				\begin{tikzpicture}[yscale=-1,transform shape]
				\pic{orcidlogo};
				\end{tikzpicture}
			}{|}}}}
\DeclareMathOperator*{\argmin}{\arg\,min\,}
\DeclarePairedDelimiter{\set}{\{}{\}}
\DeclarePairedDelimiter{\roundbr}{(}{)}
\DeclarePairedDelimiter{\floor}{\lfloor}{\rfloor}
\DeclarePairedDelimiter{\inprod}{\langle}{\rangle}
\DeclarePairedDelimiter{\norm}{\lVert}{\rVert}
\newcommand{\R}{\mathbb{R}}
\newcommand{\Z}{\mathbb{Z}}
\newcommand{\conv}{\operatorname{conv}}
\newcommand{\aff}{\operatorname{aff}}
\newcommand{\tr}{\operatorname{tr}}
\newcommand{\diag}{\operatorname{diag}}
\newcommand{\Diag}{\operatorname{Diag}}
\newcommand{\xs}{\bar x}
\newcommand{\zs}{\bar z}
\newcommand{\ys}{\bar y}
\newcommand{\Ys}{\bar Y}
\newcommand{\xm}{\widetilde x}
\newcommand{\ym}{\widetilde y}
\newcommand{\Xm}{\widetilde X}
\newcommand{\Ym}{\widetilde Y}
\newcommand{\Cm}{\widetilde C}
\newcommand{\dm}{\widetilde d}
\newcommand{\Lm}{\widetilde L}
\newcommand{\laplacian}{L}
\newcommand{\Lb}{L}
\newcommand{\Cb}{C}
\newcommand{\db}{d}
\newcommand{\xb}{x}
\newcommand{\yb}{y}
\newcommand{\Yb}{Y}
\newcommand{\Yxl}{\widetilde Y}
\newcommand{\Lxl}{\widetilde L}
\newcommand{\symmmat}[1]{\mathcal{S}^{#1}}
\newcommand{\projpsd}{\mathcal{P}_{\succeq 0}}
\newcommand{\projnsd}{\mathcal{P}_{\preceq 0}}
\newcommand{\dualvarone}{\nu}
\newcommand{\dualvartwo}{\mu}
\title{Connectivity via convexity: Bounds on the edge expansion in graphs}
\author{
Timotej Hrga\footnote{Institut f\"ur Mathematik,
  Alpen-Adria-Universit\"at Klagenfurt, Universit\"atstra{\ss}e 65-67,
  9020 Klagenfurt,
  \href{mailto:timotej.hrga@aau.at}{timotej.hrga@aau.at}, \href{mailto:melanie.siebenhofer@aau.at}{melanie.siebenhofer@aau.at}, \href{mailto:angelika.wiegele@aau.at}{angelika.wiegele@aau.at}}
\footnote{Corresponding Author: \href{mailto:timotej.hrga@aau.at}{timotej.hrga@aau.at}}  
~\orcidicon{0000-0002-4852-1986} 
\and Melanie Siebenhofer$^{*}$\orcidicon{0000-0002-9101-834X}
\and Angelika Wiegele$^{*}$\orcidicon{0000-0003-1670-7951}
}
 \date{\today{}}
\begin{document}

\maketitle

\begin{abstract}
  Convexification techniques have gained increasing interest over 
  the past decades. In this work, we apply a recently developed
  convexification technique for fractional programs by He, Liu and
  Tawarmalani~(2024) to the problem of determining the edge expansion
  of a graph.
  Computing the edge expansion of a graph is a well-known, difficult combinatorial
  problem that seeks to partition the graph into two sets such that a
  fractional objective function is minimized.
  
  We give a formulation of the edge expansion as a completely positive
  program and propose a relaxation as a doubly non-negative program,
  further strengthened by cutting planes.
  Additionally, we develop an augmented Lagrangian algorithm to solve the
  doubly non-negative program, obtaining lower bounds on the
  edge expansion.
  Numerical results confirm that this relaxation yields strong bounds
  and is computationally efficient, even for graphs with several hundred
  vertices. 
  \bigskip
  
  \textbf{Keywords:} edge expansion, Cheeger constant, completely
  positive program, doubly non-negative relaxation, augmented Lagrangian method
\end{abstract}

\paragraph{Acknowledgements}
This research was funded in part by the Austrian Science Fund (FWF) [10.55776/DOC78]. For open access purposes, the authors have applied a CC BY public copyright license to any author-accepted manuscript version arising from this submission.
This research was initiated through discussions with Mohit Tawarmalani at the workshop “Mixed-integer Nonlinear Optimization: a hatchery for modern mathematics” held at Mathematisches Forschungsinstitut Oberwolfach (MFO) in 2023.

\section{Introduction}\label{sec:intro}

Let $G = (V,E)$ be a simple graph on $n \ge 3$ vertices.
The edge expansion of $G$ is defined as
\begin{equation*}
  h(G) = \min_{\emptyset \neq S \subset V}
  \frac{\lvert \partial S \rvert}%
  {\min \{ \lvert S \rvert, \lvert V \setminus S \rvert\}},
\end{equation*}
where $\partial S = \big\{ \{i,j\}  \in E : i \in S, j \notin S \big\}$
is the cut induced by the set $S$.
This graph parameter is also known under the name \emph{Cheeger constant}
or \emph{isoperimetric number} or \emph{sparsest cut}.

This constant is positive if and only if the graph is connected.
A graph with $h(G) \ge c$, for some constant $c > 0$,
is called a $c$-expander.
A graph with $h(G) < 1$ is said to have a bottleneck since
there are not too many edges across it.
The famous conjecture of
Mihail-Vazirani~\cite{mihail1992expansion,feder92matroids} in
polyhedral combinatorics claims that the graph (1-skeleton) of any 
0/1-polytope has edge expansion at least~1.
This has been proven to be true for several combinatorial 
polytopes~\cite{mihail1992expansion,kaibel2004expansion} and bases-exchange
graphs of matroids~\cite{Anari2019}, and a weaker form was established
recently for random 0/1-polytopes~\cite{leroux2023randompolytopes}.  
The edge expansion problem arises in several applications. For
references and for relations to similar problems, we refer the reader
to the recent paper~\cite{gupte2024journal}.

The edge expansion problem belongs to the class of combinatorial fractional programming
problems.
Fractional programming has been studied at least since the sixties of
the $20^\text{th}$ century~\cite{dinkelbach1967nonlinear,charnes1962fractional}. 
It finds applications in fields like economics, engineering, and
telecommunications.
While there are fractional programs that can be solved in polynomial
time~\cite{charnes1962fractional}, the edge expansion problem and many
other fractional programs do not admit polynomial time algorithms. For
more details on applications and on known results of fractional
programming, we refer to the comprehensive literature review
in~\cite{he2024convexification}.  

In~\cite{gupte2024journal} two exact algorithms based on semidefinite
programming to compute $h(G)$ have been developed.
In the first algorithm,
the problem is split into several $k$-bisection problems. Limiting the
number of candidates to be considered for $k$ as well as computing the
$k$-bisection is done using strong bounds from semidefinite
programming.
The other algorithm in~\cite{gupte2024journal} uses an idea of Dinkelbach
and computes the edge expansion by considering several parametrized
optimization problems, which can all be solved using a max-cut solver.

We now take a different direction, and instead of transforming the
problem into several instances of known combinatorial optimization
problems, we want to obtain strong lower bounds 
on the edge expansion itself by using a doubly non-negative relaxation. 
This relaxation results from applying a convexification technique for
fractional programs, developed recently by He, Liu and
Tawarmalani~\cite{he2024convexification}.

The main contributions of this paper are as follows.
\begin{itemize}
\item We convexify the edge expansion problem and write it as a completely positive program.
\item We relax this completely positive program to a doubly non-negative program and strengthen it with additional inequalities.
\item We develop and implement an augmented Lagrangian algorithm
  tailored for the derived relaxation.
\item The quality of the bounds produced by the relaxation and the
  effectiveness of our algorithm are illustrated by numerical
  results. 
\item All code and data sets are publicly available as open source.
\end{itemize}

\paragraph{Outline}
This paper is structured as follows. We next formulate the problem as a binary fractional optimization problem and derive a basic doubly non-negative relaxation.
In Section~\ref{sec:theorems-fromhe2024}
we state the main result of~\cite{he2024convexification} that we apply in
Section~\ref{sec:convexification} to give a formulation of the edge
expansion as a completely positive program.
In Section~\ref{sec:dnn}, we then relax the completely positive program
to a doubly non-negative programming problem, propose a facial reduction approach,
and prove that the new bound is at least as good as the one from the basic relaxation. 
An augmented Lagrangian algorithm for solving the relaxation is
presented in Section~\ref{sec:auglag}.
Section~\ref{sec:numericalresults} shows that, indeed, the relaxation
is improving over the basic relaxation significantly
and demonstrates the efficiency of the bounding routine. We conclude the paper in Section~\ref{sec:conclusion}. 

\paragraph{Notation}
The set of $n\times n$ real symmetric matrices is denoted by $\symmmat{n}$.
The positive semidefiniteness condition for $X \in \symmmat{n}$ is written as $X \succeq 0$,
and we denote by~$\projpsd(X)$ the projection of~$X$ onto the cone of positive semidefinite matrices.
We write $X \ge 0$ to denote that a matrix $X$ is entry wise non-negative and $X \in \text{DNN}$ if $X \succeq 0$ and $X \ge 0$.
The trace of $X$ is written as $\tr(X)$ and defined as the sum of its diagonal elements. 
The trace inner product for~$ X,Y \in \symmmat{n}$ is defined as
$\inprod{X, Y} = \tr (XY)$, and we denote the Frobenius norm of $X$ by $\norm{X} = \sqrt{\inprod{X,X}}$.
The operator $\diag(X)$ returns the main diagonal of matrix
$X$ as a vector.
The operator $\Diag \colon \R^n \to \R^{n\times n}$ yields the diagonal matrix with its diagonal
equal to the vector and
$\Diag(M_1,M_2,\dots,M_k)$ returns the block-diagonal matrix constructed by the input matrices
and by the diagonal matrices formed from all the arguments, which are vectors.
The symbol $\otimes$ denotes the Kronecker product.
The vector of all ones of size $n$ is denoted by $e_n$, 
the vector of all zeros of size $n$ is denoted by $0_n$, and $I_n$ is
the $n \times n$ identity matrix. By~$E_n$, we denote the matrix
of all ones, and~$u_i$ is the standard unit vector with entry~$i$ equal to~1.
All the subscripts are omitted if the size is clear from the context.

\section{A basic DNN relaxation for edge expansion}\label{sec:dnn-basic}
The edge expansion problem can be formulated as the binary fractional
optimization problem
\begin{equation}
	\label{eq:fracprobl-edgeexp}
\begin{aligned}
h(G) = \min        \quad & \frac{\xs^\top \laplacian \xs}{e^\top \xs} \\
\text{s.t.} \quad & 1 \leq e^\top \xs \leq \Big\lfloor \frac n 2 \Big\rfloor\\
				  & \xs \in \{0,1\}^n,
\end{aligned}
\end{equation}
where~$\laplacian$ denotes the Laplacian matrix of~$G$ that is defined
as 
\begin{equation*}
	\laplacian_{ij} = 
	\begin{cases}
		-1, 		& \text{if } \{i,j\} \in E,\\
		\deg(i), & \text{if } i = j \text{ and}\\
		0,		& \text{otherwise.}
	\end{cases}
\end{equation*}
To obtain a doubly non-negative relaxation (DNN) for \eqref{eq:fracprobl-edgeexp}, one can linearize the objective function by introducing a matrix variable, which results in a matrix lifting relaxation.
Let $X = \xs \xs^\top$ and $\rho = \frac{1}{e^\top \xs}$. Then the matrix $\Ys = \rho X$ and the vector $\ys = \rho \xs$ satisfy the following conditions. First, we have
\[e^\top\ys = e^\top(\rho \xs) = \frac{e^\top \xs}{e^\top \xs} = 1.\]
Then, because of the constraints $1 \leq e^\top \xs \leq \lfloor \frac n 2 \rfloor$, it follows that \[\frac{1}{\lfloor \frac n 2 \rfloor} \leq \rho \leq 1.\]
From $\langle E, \Ys \rangle = \tr\left(ee^\top(\rho \xs\xs^\top)\right) = \frac{(e^\top \xs)^2}{e^\top \xs}  = e^\top \xs$ we get \[1 \leq \langle E, \Ys \rangle \leq \Big\lfloor \frac n 2 \Big\rfloor.\]
For $\xs_i \in \{0,1\}$, we have $\xs_i^2 = \xs_i$ and hence, \[\diag(\Ys) = \diag(\rho\xs \xs^\top) = \rho \xs = \ys.\]
The non-convex constraint $X - \xs \xs^\top = 0$ is relaxed to $X - \xs \xs^\top \succeq 0$, which is equivalent to $\bigg(\begin{matrix}
X &  \xs \\
\xs^\top &  1
\end{matrix}\bigg) \succeq 0$ by the well-known Schur complement. Multiplying by $\rho > 0$ gives \[\bigg(\begin{matrix}
\Ys &  \ys \\
\ys^\top &  \rho
\end{matrix}\bigg) \succeq 0.\]
Furthermore, the objective function can be written as
\[
\frac{\xs^\top L \xs}{e^\top \xs} = \frac{\langle L, X \rangle}{e^\top \xs} = \langle L, \rho X \rangle = \langle L, \Ys \rangle.
\]
After putting these constraints together and adding $\bigg(\begin{matrix}
\Ys &  \ys \\
\ys^\top &  \rho
\end{matrix}\bigg) \ge 0$,
we arrive at the following basic DNN relaxation for the edge expansion problem.
\begin{equation}
\label{eq:sdp-relax-np1}
\tag{DNN\textsubscript{n+1}}
\begin{aligned}
\min        \quad & \langle \laplacian , \Ys \rangle \\
\text{s.t.} \quad & e^\top \ys = 1\\
& \frac{1}{\lfloor \frac n 2 \rfloor} \leq \rho \leq 1\\
&  1 \leq \langle E, \Ys \rangle \leq \Big\lfloor \frac n 2 \Big\rfloor\\
& \operatorname{diag}(\Ys) = \ys \\
& \bigg(\begin{matrix}
\Ys &  \ys \\
\ys^\top &  \rho
\end{matrix}\bigg)\in \text{DNN}.
\end{aligned}
\end{equation}
We call this a basic DNN relaxation since it
does not contain additional cutting planes such as triangle inequalities, etc.
Adding a rank-constraint to~\eqref{eq:sdp-relax-np1} results in computing $h(G)$.
We will later relate~\eqref{eq:sdp-relax-np1} to the relaxation which we
are going to derive in the next section. 

\section{Convexification of the edge expansion}\label{seq:theoretical-background}
In this section, we apply the recent convexification results of
He, Liu, and Tawarmalani in~\cite{he2024convexification} to reformulate the
edge expansion problem.
For this, we start by presenting the main results needed from that paper.
\subsection{A convexification technique for fractional programs}
\label{sec:theorems-fromhe2024}
The first important result on the convexification for fractional programs is the following.
\begin{theorem}[Theorem 2 of~\cite{he2024convexification}]
	\label{thm:thm2-he2023}
	Let $f \colon \mathcal X \subseteq \R^n \to \R^m$ with
	$f(x) = (f_1(x),\dots,f_m(x))^\top $ be a vector of base functions.
	And let another vector be obtained from the base functions by
	dividing each of them with a linear form of $f$.
	We then define the following two sets.
	\begin{align*}
		\mathcal F &= \{f(x) : x \in \mathcal X\} \\
		\mathcal G &= \bigg\{\frac{f(x)}{\sum_{i\in [m]} \alpha_i f_i(x)} : x \in \mathcal X \bigg\}
	\end{align*}
	We assume that $\mathcal F$ is bounded and there exists~$\varepsilon > 0$
	such that $\sum_{i\in [m]} \alpha_i f_i(x) > \varepsilon$ for all~$x \in \mathcal X$.
	Then,
	\begin{equation}
		\conv(\mathcal G) = \{ g \in \R^m : g \in \rho \conv(\mathcal F),\ \alpha^\top g = 1,\ \rho \geq 0 \}
	\end{equation}
	and if $f_1(x) = 1$, then
		\begin{equation}
	\conv(\mathcal F) = \{ f \in \R^m : f \in \sigma \conv(\mathcal G),\ f_1 = 1,\ \sigma \geq 0 \}.
	\end{equation}
\end{theorem}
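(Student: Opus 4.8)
The plan is to exploit the homogenization (perspective) correspondence between $\mathcal F$ and $\mathcal G$. The map $x \mapsto f(x)/\bigl(\sum_{i\in[m]}\alpha_i f_i(x)\bigr)$ is well defined on $\mathcal X$ precisely because of the hypothesis $\alpha^\top f(x) > \varepsilon > 0$, and composing it with $f$ shows that $\mathcal G$ is the image of $\mathcal X$. I would first record two elementary observations: every $g\in\mathcal G$ satisfies $\alpha^\top g = 1$, and it can be written as $g = \rho\, f(x)$ with $\rho = 1/\alpha^\top f(x) > 0$ and $f(x)\in\mathcal F$. Hence $\mathcal G$ is contained in the set $\mathcal H := \{g\in\R^m : g\in\rho\,\conv(\mathcal F),\ \alpha^\top g = 1,\ \rho\ge 0\}$ on the right-hand side of the first identity, and likewise every $f(x)\in\mathcal F$ equals $\bigl(\alpha^\top f(x)\bigr)\,g$ for some $g\in\mathcal G$.

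For the first identity I would prove the two inclusions separately. For $\conv(\mathcal G)\subseteq\mathcal H$ it suffices, given the observation above, to check that $\mathcal H$ is convex: if $g_1,g_2\in\mathcal H$ with associated scalars $\rho_1,\rho_2$, then $\alpha^\top g_j = 1$ forces $\rho_j>0$ (otherwise $g_j=0$, contradicting $\alpha^\top g_j=1$), so for $t\in[0,1]$ one puts $\rho := t\rho_1+(1-t)\rho_2>0$ and rewrites $t g_1+(1-t)g_2$ as $\rho$ times a convex combination of $g_1/\rho_1,\,g_2/\rho_2\in\conv(\mathcal F)$, while $\alpha^\top(t g_1+(1-t)g_2)=1$ is immediate. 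For $\mathcal H\subseteq\conv(\mathcal G)$, take $g\in\mathcal H$; then $\rho>0$ as before and, by Carathéodory, $g=\rho\sum_k\lambda_k f(x_k)$ for finitely many $x_k\in\mathcal X$ and convex weights $\lambda_k$. Writing $\beta_k := \alpha^\top f(x_k)>\varepsilon$, the constraint $\alpha^\top g = 1$ reads $\sum_k \rho\lambda_k\beta_k = 1$, so the numbers $\mu_k := \rho\lambda_k\beta_k$ are non-negative and sum to one, and
\[
 g \;=\; \rho\sum_k\lambda_k f(x_k) \;=\; \sum_k \mu_k\,\frac{f(x_k)}{\beta_k}
\]
displays $g$ as a convex combination of points of $\mathcal G$.

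For the second identity, under the extra hypothesis $f_1\equiv 1$, I would run the same rescaling in the opposite direction. If $f\in\conv(\mathcal F)$, write $f=\sum_k\lambda_k f(x_k)$ with convex weights; then $f_1 = \sum_k\lambda_k f_1(x_k) = \sum_k \lambda_k = 1$, and $\sigma := \sum_k\lambda_k\beta_k>0$ expresses $f$ as $\sigma$ times the convex combination $\sum_k(\lambda_k\beta_k/\sigma)\,(f(x_k)/\beta_k)$ of points of $\mathcal G$. Conversely, if $f\in\sigma\,\conv(\mathcal G)$ with $\sigma\ge 0$ and $f_1=1$, write $f=\sigma\sum_k\mu_k g_k$ with $g_k = f(y_k)/\alpha^\top f(y_k)$ and convex weights $\mu_k$; then $f=\sum_k\lambda_k f(y_k)$ with $\lambda_k := \sigma\mu_k/\alpha^\top f(y_k)\ge 0$, and the hypotheses $f_1=1$ and $f_1(y_k)=1$ force $\sum_k\lambda_k = f_1 = 1$, so $f\in\conv(\mathcal F)$.

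I expect the only delicate points, rather than genuine obstacles, to be bookkeeping: ensuring that the slacks written as ``$\rho\ge 0$'' and ``$\sigma\ge 0$'' are in fact strictly positive on the relevant sets (this is exactly where the normalizations $\alpha^\top g=1$ and $f_1=1$ enter), that all convex representations may be taken finite via Carathéodory, and pinning down the role of the standing assumptions that $\mathcal F$ is bounded and $\alpha^\top f>\varepsilon$ — these keep $\rho$ bounded away from $0$ and $\infty$ and ensure that the identities hold for $\conv$ itself, with no closure operation needed. Beyond that, nothing more than the definition of the convex hull and elementary rescaling is required.
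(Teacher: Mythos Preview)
Your argument is correct and is precisely the standard homogenization (perspective) proof one would expect: the reweighting $\mu_k = \rho\lambda_k\beta_k$ (and its inverse $\lambda_k = \sigma\mu_k/\beta_k$) is the heart of the matter, and you handle the degenerate cases $\rho=0$, $\sigma=0$ correctly by observing that the affine normalizations $\alpha^\top g = 1$ and $f_1 = 1$ rule them out.

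There is, however, nothing to compare against here: the paper does not prove this statement. It is quoted verbatim as Theorem~2 of~\cite{he2024convexification} and used as a black box in Section~\ref{sec:convexification}. So your proposal stands on its own as a valid self-contained proof of the cited result, but the present paper offers no alternative argument.
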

This theorem is used to reformulate quadratic fractional binary
optimization problems to optimization problems over the completely
positive cone. 
Let $\mathcal{X} \subseteq \R^n$ and $\mathcal L$ be an affine set of $\R^n$.
We consider the following minimization problem
\begin{equation}
	\label{eq:general-quadr-frac-min}
	\min_{x \in \mathcal X \cap \mathcal L} \ 
	\frac{x^\top B x + b^\top x + b_0}{x^\top A x + a^\top x + a_0}
\end{equation}
and set $q(x) \coloneqq x^\top A x + a^\top x + a_0$ to be the denominator
of the objective function.
We assume that $\mathcal X \cap \mathcal L$ is non-empty, bounded and that
for all elements $x \in \mathcal X \cap \mathcal L$ it holds~$q(x) > 0$.
Let further
\begin{equation*}
	\mathcal G' =
	\bigg\{ \frac{(1,x,xx^\top)}{q(x)} : x \in \mathcal X \cap \mathcal L \bigg\},
\end{equation*}
then~\eqref{eq:general-quadr-frac-min} is equivalent to
\begin{equation}
	\min \big\{\langle B, Y \rangle + b^\top y + b_0 \rho
		    : (\rho, y, Y) \in \mathcal G' \big\}.
\end{equation}
One could equivalently also optimize the linear objective over the convex
hull~$\conv(\mathcal G')$.
To do so, the authors of~\cite{he2024convexification} showed an equivalence
between the convex hull of $\mathcal G'$ and the convex hull of the
set $\mathcal F$ with
\begin{equation*}
	\mathcal F = \{(1,x, X) : x \in \mathcal X, X = xx^\top \}.
\end{equation*}
%\textcolor{gray}{
Note, that in~\cite{he2024convexification} they omit the constant~$1$ and write
$\mathcal F = \{(x, xx^\top) : x \in \mathcal X\}$. For a better
understanding, we stick to the notations of \Cref{thm:thm2-he2023} with
our definition of~$\mathcal{F}$, including the constant~$1$.%}

Since the feasible set~$\mathcal X$ in~$\mathcal G'$ is, in contrast to~$\mathcal F$,
also intersected with the affine set~$\mathcal L$,
\Cref{thm:thm2-he2023} can not be directly applied but needs some facial decomposition.
\begin{proposition}[Proposition 3 in~\cite{he2024convexification}]
	\label{prop:conv-intersecwith-linear-eq}
	Assume that $q(x) > 0$ over a non-empty and bounded set
	$\mathcal X \cap \mathcal L \subseteq \R^n$ and suppose
	that $\mathcal L = \{x \in \R^n : Cx = d\}$ for some
	$C \in \R^{p \times n}$ and $d \in \R^p$.
	Then,
	\begin{align*}
		\conv(\mathcal G') = \Big\{ (\rho, y, Y) :\ 
				&(\rho, y, Y) \in \rho \conv(\mathcal F),\\
				&\langle A, Y\rangle + a^\top y + a_0\rho = 1,\ \rho \geq 0,\\
				&\tr(CYC^\top - Cyd^\top - dy^\top C^\top + \rho dd^\top) = 0 \Big\}.
	\end{align*}
	For $\mathcal L = \R^n$, additionally 
	$\conv(\mathcal F) = \big\{ (1,x,X) :
	(1,x,X) \in \sigma \conv(G'), \ \sigma \geq 0\big\}$ holds.
\end{proposition}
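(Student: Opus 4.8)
The plan is to reduce the statement to \Cref{thm:thm2-he2023} applied over the domain $\mathcal X\cap\mathcal L$, and then to recognise the convex hull it produces as the exposed face of $\conv(\mathcal F)$ cut out by the lifted valid inequality $\norm{Cx-d}^2\geq 0$. Concretely, I would first apply \Cref{thm:thm2-he2023} with base functions the constant $1$, the coordinates of $x$, and the entries of $xx^\top$, over the domain $\mathcal X\cap\mathcal L$, and with the denominator linear form chosen so that $\sum_i\alpha_i f_i(x)=\langle A,xx^\top\rangle+a^\top x+a_0=q(x)$. Then the set $\mathcal G$ of \Cref{thm:thm2-he2023} is precisely $\mathcal G'$, its set $\mathcal F$ is $\mathcal F'\coloneqq\{(1,x,xx^\top):x\in\mathcal X\cap\mathcal L\}$, and the hypotheses hold: $\mathcal X\cap\mathcal L$ is non-empty and bounded, so $\mathcal F'$ is bounded, and $q$, being a polynomial positive on $\mathcal X\cap\mathcal L$, is bounded below there by some $\varepsilon>0$. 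Hence
\[\conv(\mathcal G')=\{(\rho,y,Y):(\rho,y,Y)\in\rho\conv(\mathcal F'),\ \langle A,Y\rangle+a^\top y+a_0\rho=1,\ \rho\geq 0\},\]
and, since $1$ is among the base functions, also $\conv(\mathcal F')=\{(1,x,X):(1,x,X)\in\sigma\conv(\mathcal G'),\ \sigma\geq 0\}$.

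The remaining work — the facial decomposition — is to replace $\conv(\mathcal F')$ by $\conv(\mathcal F)$. I would introduce the linear functional
\[\ell(t,x,X)=\tr\big(CXC^\top-Cxd^\top-dx^\top C^\top+t\,dd^\top\big)\]
on $\R\times\R^n\times\symmmat{n}$ and observe that on $\mathcal F$, where $t=1$ and $X=xx^\top$, it equals $\norm{Cx-d}^2$. Thus $\ell\geq 0$ holds on $\mathcal F$, hence on $\conv(\mathcal F)$, and $\ell$ vanishes at a point of $\mathcal F$ exactly when $Cx=d$, i.e.\ exactly on $\mathcal F'$. By the elementary fact that, for a linear functional $\ell$ non-negative on a set $S$, one has $\{p\in\conv(S):\ell(p)=0\}=\conv(\{p\in S:\ell(p)=0\})$ — in a convex combination equal to a point with $\ell=0$, every summand of positive weight already has $\ell=0$ — it follows that $\conv(\mathcal F')=\conv(\mathcal F)\cap\{\ell=0\}$.

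Finally I would combine the two steps. Since $\ell$ is linear and homogeneous, for $\rho>0$ one has $(\rho,y,Y)\in\rho\,[\conv(\mathcal F)\cap\{\ell=0\}]$ iff $(\rho,y,Y)\in\rho\conv(\mathcal F)$ and $\ell(\rho,y,Y)=0$; moreover the normalisation constraint $\langle A,Y\rangle+a^\top y+a_0\rho=1$ already excludes $\rho=0$, so that case needs no separate treatment. Computing $\ell(\rho,y,Y)=\tr(CYC^\top-Cyd^\top-dy^\top C^\top+\rho dd^\top)$ and substituting $\conv(\mathcal F')=\conv(\mathcal F)\cap\{\ell=0\}$ into the description of $\conv(\mathcal G')$ from the first step yields exactly the claimed formula. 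For $\mathcal L=\R^n$ one may take $C=0$ and $d=0$, so that $\ell\equiv 0$, $\mathcal F'=\mathcal F$, and the second identity of the first step becomes $\conv(\mathcal F)=\{(1,x,X):(1,x,X)\in\sigma\conv(\mathcal G'),\ \sigma\geq 0\}$, as asserted.

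I expect the main obstacle to be the careful bookkeeping that makes the perspective/homogenisation operation $S\mapsto\{\rho s:s\in S,\ \rho\geq 0\}$ commute with cutting out the face $\{\ell=0\}$: one must check that $\ell$ homogenises with the constant term $d^\top d$ attached to the $\rho$-coordinate, and confirm that no boundary phenomenon at $\rho=0$ is lost, in addition to verifying that the regularity hypotheses of \Cref{thm:thm2-he2023} (boundedness of $\mathcal F'$ and the uniform lower bound on $q$) genuinely hold here. The conceptual point — that $\conv(\mathcal G')$ is the face of the larger convexification exposed by the lifted constraint $\norm{Cx-d}^2\geq 0$ — is then essentially immediate.
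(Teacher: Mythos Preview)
The paper does not supply its own proof of this proposition; it is quoted from \cite{he2024convexification} and the text explicitly defers the details there. So there is nothing in the present paper to compare against.

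That said, your plan is correct and is exactly the intended argument: apply \Cref{thm:thm2-he2023} over $\mathcal X\cap\mathcal L$ to obtain the description of $\conv(\mathcal G')$ in terms of $\conv(\mathcal F')$, then identify $\conv(\mathcal F')$ as the face $\{\ell=0\}$ of $\conv(\mathcal F)$ exposed by the lifted valid inequality $\ell(1,x,xx^\top)=\norm{Cx-d}^2\ge 0$, and finally use homogeneity of $\ell$ to pull the face condition through the perspective map. Your treatment of the $\rho=0$ boundary is fine: under either description the normalisation $\langle A,Y\rangle+a^\top y+a_0\rho=1$ forces $(\rho,y,Y)\neq 0$, hence $\rho>0$.

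One genuine technical point to flag: the step ``$q$ positive on the bounded set $\mathcal X\cap\mathcal L$ implies $q\ge\varepsilon$ there'' is not valid without closedness of $\mathcal X\cap\mathcal L$, whereas \Cref{thm:thm2-he2023} requires the uniform lower bound. The proposition as stated here only assumes $q>0$, so strictly speaking either an additional compactness hypothesis is needed, or one must argue (as in the source) that the conclusion still holds by a closure/limit argument. This is a wrinkle in the hypotheses rather than in your strategy, but you should not present the $\varepsilon$-bound as automatic.
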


Depending on~$\mathcal X$, it still remains to characterize the
convex hull of~$\mathcal F$.
In the case of~$\mathcal X = \R_{\geq 0}^n$ for example, we have that
$ \mathcal F = \{(1, x, X) : x \in \R_{\geq 0}^n,\ X = xx^\top \}$
and hence the convex hull of $\mathcal F$ is
\begin{equation*}
	\conv(\mathcal F) = \bigg\{ (1, x, X) :
	\roundbr[\bigg]{\begin{matrix}
	X & x\\
	x^\top & 1
	\end{matrix} }
	\in \mathcal{CP}^{n+1}\bigg\},
\end{equation*}
where $\mathcal{CP}^{n+1}$ denotes the cone of completely
positive matrices of dimension $(n+1)\times(n+1)$.
This immediately allows us to state the following corollary.

\begin{corollary}[Corollary 3 in~\cite{he2024convexification}]
        If $\mathcal X$ is the non-negative orthant,
        and suppose
	that $\mathcal L = \{x \in \R^n : Cx = d\}$ for some
	$C \in \R^{p \times n}$ and $d \in \R^p$,
	then~\eqref{eq:general-quadr-frac-min} can be formulated as
	\begin{equation}
	\begin{aligned}
	\min \quad & \langle B, Y \rangle + b^\top y + b_0 \rho \\
	\text{s.t.} \quad & \langle A, Y \rangle + a^\top y + a_0 \rho = 1\\
	&\tr(CYC^\top - Cyd^\top - dy^\top C^\top + \rho dd^\top) = 0  \\
	& \roundbr[\bigg]{ \begin{matrix}
		Y & y\\
		y^\top & \rho
	  \end{matrix} }
	  \in \mathcal{CP}^{n+1}.
	\end{aligned}
	\end{equation} 
\end{corollary}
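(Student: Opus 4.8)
The plan is to derive the corollary by combining \Cref{prop:conv-intersecwith-linear-eq} with the known completely positive description of $\conv(\mathcal F)$ in the case $\mathcal X = \R_{\geq 0}^n$. First I would observe that the objective of~\eqref{eq:general-quadr-frac-min} is a linear function of the triple $(\rho, y, Y)$ when evaluated on $\mathcal G'$: indeed, by construction of $\mathcal G'$ and the equivalence recorded just before \Cref{prop:conv-intersecwith-linear-eq}, minimizing the fractional objective over $\mathcal X \cap \mathcal L$ is the same as minimizing $\langle B, Y\rangle + b^\top y + b_0\rho$ over $(\rho,y,Y)\in\mathcal G'$. Since a linear function attains its minimum over a set at an extreme point, this in turn equals the minimum of the same linear function over $\conv(\mathcal G')$; this is the standard linearization step and requires only that $\conv(\mathcal G')$ be a closed convex set whose extreme points lie in the closure of $\mathcal G'$, which follows from boundedness of $\mathcal X\cap\mathcal L$ and the hypothesis $q(x)>\varepsilon$.

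Next I would substitute the explicit formula for $\conv(\mathcal G')$ from \Cref{prop:conv-intersecwith-linear-eq}. This gives three groups of constraints on $(\rho,y,Y)$: the membership $(\rho,y,Y)\in\rho\conv(\mathcal F)$ together with $\rho\geq 0$; the normalization $\langle A,Y\rangle + a^\top y + a_0\rho = 1$; and the trace constraint $\tr(CYC^\top - Cyd^\top - dy^\top C^\top + \rho dd^\top) = 0$. The last two already match the constraints stated in the corollary verbatim, so the only remaining work is to rewrite the first group. Here I would invoke the specific structure $\mathcal X = \R_{\geq 0}^n$: the excerpt states that $\conv(\mathcal F) = \{(1,x,X) : \left(\begin{smallmatrix} X & x\\ x^\top & 1\end{smallmatrix}\right)\in\mathcal{CP}^{n+1}\}$. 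Scaling by $\rho\geq 0$ and using that $\mathcal{CP}^{n+1}$ is a convex cone (so it is closed under non-negative scaling, and $\rho\left(\begin{smallmatrix} X & x\\ x^\top & 1\end{smallmatrix}\right) = \left(\begin{smallmatrix} \rho X & \rho x\\ \rho x^\top & \rho\end{smallmatrix}\right)$), the condition $(\rho,y,Y)\in\rho\conv(\mathcal F)$ with $\rho\geq 0$ becomes exactly $\left(\begin{smallmatrix} Y & y\\ y^\top & \rho\end{smallmatrix}\right)\in\mathcal{CP}^{n+1}$, where $Y = \rho X$ and $y = \rho x$. Note that $\rho\geq 0$ is then subsumed, since the bottom-right entry of a completely positive matrix is automatically non-negative.

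Assembling these three pieces yields precisely the program stated in the corollary, and its optimal value equals~\eqref{eq:general-quadr-frac-min}. I expect the main subtlety — rather than a genuine obstacle — to be the careful handling of the cone-scaling identification $\rho\conv(\mathcal F) \leftrightarrow \mathcal{CP}^{n+1}$ at the boundary case $\rho = 0$: one should check that the set $\{(\rho,y,Y) : (\rho,y,Y)\in\rho\conv(\mathcal F),\ \rho\geq 0\}$ is genuinely closed and coincides with $\{(\rho,y,Y):\left(\begin{smallmatrix} Y & y\\ y^\top & \rho\end{smallmatrix}\right)\in\mathcal{CP}^{n+1}\}$ even when $\rho=0$, using that $\mathcal{CP}^{n+1}$ is a closed cone and that a completely positive matrix with a zero diagonal entry has the corresponding row and column equal to zero. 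Everything else is a direct substitution of the hypotheses of \Cref{prop:conv-intersecwith-linear-eq} and the displayed description of $\conv(\mathcal F)$, so no further computation is needed.
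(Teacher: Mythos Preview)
Your proposal is correct and matches the paper's approach exactly: the paper does not give a separate proof but simply states that the corollary follows ``immediately'' from \Cref{prop:conv-intersecwith-linear-eq} together with the displayed completely positive description of $\conv(\mathcal F)$ when $\mathcal X = \R_{\geq 0}^n$. Your write-up is in fact more careful than the paper's one-line justification, since you explicitly flag the $\rho = 0$ boundary case in the identification $\rho\conv(\mathcal F)\leftrightarrow\mathcal{CP}^{n+1}$.
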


To handle binary variables in this setting, one can introduce
non-negative slack variables $\zs_i$ for each original variable $\xs_i$ and require $\xs_i + \zs_i = 1$.
We collect all these variables in a vector $x^\top = \roundbr{\begin{matrix}\xs^\top & \zs^\top \end{matrix}}$.
Let $k$ be the index of $\zs_i$ in the variable vector~$x$.
Since we are working with completely positive matrices, it can
be shown that by adding the constraint $X_{ik} = 0$, we get an exact reformulation,
as every extreme ray belonging to the face implied by this constraint
satisfies the binary condition, see Remark~4 in~\cite{he2024convexification}.

We are now ready to apply the theory of~\cite{he2024convexification} presented in this section
to the problem of computing the edge expansion of a graph.

\subsection{Applying the convexification techniques to the edge expansion}\label{sec:convexification}
In analogy to the general notation for the objective function of quadratic fractional problems
from \Cref{sec:theorems-fromhe2024} above, 
we get that for the edge expansion problem $B = \laplacian$, $b = 0$, $b_0 = 0$, and
$A = 0$, $a = e_n$, $a_0 = 0$.
In the next two subsections, we give two different ways to
reformulate~\eqref{eq:fracprobl-edgeexp}. The first one is
to apply~\Cref{thm:thm2-he2023} directly and the second one
is to formulate the problem in such a way that
\Cref{prop:conv-intersecwith-linear-eq} can be used. 
\subsubsection{Reformulation with \Cref{thm:thm2-he2023}}
Referring to the notation from before, we define
\begin{equation*}
\begin{aligned}
\mathcal G' &= \bigg\{ \frac{(1,\xs,\xs\xs^\top)}{e^\top\xs} :
		 \xs \in \{0,1\}^n,\ 1 \leq e^\top \xs \leq \Big\lfloor \frac n 2 \Big\rfloor \bigg\}, \text{ and}\\
		 \mathcal F' &=\bigg\{ (1,\xs,\xs\xs^\top) : 
		 \xs \in \{0,1\}^n,\ 1 \leq e^\top \xs \leq \Big\lfloor \frac n 2 \Big\rfloor \bigg\}.
\end{aligned}
\end{equation*}
With \Cref{thm:thm2-he2023} we get that
\begin{equation}
	\conv(\mathcal{G}') = \Big\{ (\rho, \ys, \Ys) :
						(\rho,\ys,\Ys) \in \rho \conv (\mathcal F'),\ 
						 e^\top \ys = 1,\ \rho \geq 0 \Big\}
\end{equation}
and $h(G) = \min \{ \langle L, \Ys \rangle :
					(\rho, \ys, \Ys) \in \conv(\mathcal G') \}$
holds.
Consequently, we are now interested in describing $\conv(\mathcal F')$.
Motivated by the proof of \Cref{prop:conv-intersecwith-linear-eq}
in~\cite{he2024convexification}, we introduce the slack variables
$s$ and $t$ to rewrite
the linear inequalities as the equations
\begin{equation*}
	\underbrace{
		\begin{pmatrix}
			e^\top_n & 1 & 0 \\
			e^\top_n & 0 & -1
		\end{pmatrix}
	}_{\Cm}
	\cdot
	\underbrace{
		\begin{pmatrix}
			\xs \\
			s \\
			t 
		\end{pmatrix}
	}_{\xm}
	=
	\underbrace{
		\begin{pmatrix}
			\big \lfloor \frac n 2 \big \rfloor \\
			1
		\end{pmatrix}
	}_{\dm}.
\end{equation*}
It then holds that
\begin{equation*}
	\conv(\mathcal F') = \Big\{ (1, \xm, \Xm) :
			\tr(\Cm\Xm \Cm^\top - \Cm \xm \dm^\top - \dm\xm^\top \Cm^\top + \dm\dm^\top) = 0,\ 
			(1,\xm, \Xm) \in \conv(\mathcal F)  \Big\}
\end{equation*}
with
\begin{equation*}
	\mathcal F = \Big\{ (1,\xm, \Xm) :  \Xm = \xm \xm^\top,\ 
					  \xm^\top = (\begin{matrix}\xs^\top & s & t \end{matrix}),
					  \ \xs \in \{0,1\}^n,\ s, t \in \R_{\geq 0}  
					    \Big\},
\end{equation*}
where the details of the proof can be found in~\cite{he2024convexification}.
Let \[\Lm = \operatorname{Diag}(\laplacian, 0_2),\]
then~\eqref{eq:fracprobl-edgeexp} is equivalent to
\begin{equation}
	\begin{aligned}
h(G) =	\min        \quad & \langle \Lm , \Ym \rangle \\
	    \text{s.t.} \quad & (\begin{matrix} e_n^\top & 0_2^\top \end{matrix}) \ym = 1\\
	& \tr(\Cm\Ym\Cm^\top - \Cm \ym \dm^\top - \dm \ym^\top \Cm^\top + \rho \dm\dm^\top) = 0\\
	& (\rho,\ym, \Ym) \in \rho \conv(\mathcal F)\\
	& \rho \geq 0.
	\end{aligned}
\end{equation}
The constraint
$\tr(\Cm\Ym \Cm^\top - \Cm \ym \dm^\top - \dm\ym^\top \Cm^\top + \rho\dm\dm^\top) = 0$
can also be written as the linear equality constraint
\[\inprod[\bigg]{\roundbr[\bigg]{\begin{matrix} \Cm^\top \\ - \dm^\top \end{matrix}}
			 (\begin{matrix} \Cm &  -\dm  \end{matrix}) ,
			 \roundbr[\bigg]{\begin{matrix}
			 	\Ym & \ym \\
			 	\ym^\top & \rho
			 \end{matrix}}
			   } = 0\]
in the matrix variable and is, therefore, easy to handle.
But still, the problem remains to optimize over the convex hull of $\mathcal F$.

In the next subsection, we use the more general Proposition~\ref{prop:conv-intersecwith-linear-eq}
to rewrite~\eqref{eq:fracprobl-edgeexp} as a completely positive optimization problem.

\subsubsection{Reformulation applying \Cref{prop:conv-intersecwith-linear-eq}}
Additionally to the slack variables $s, t \in \R_{\geq 0}$ for the two
inequalities of~\eqref{eq:fracprobl-edgeexp}, 
we now introduce for $\xs \in \R_{\geq 0}^n$ the slack variables $\zs \in \R_{\geq 0}^n$.
We denote by $\xb$ the vector collecting all variables, namely  
$\xb^\top = (\begin{matrix}\xs^\top & \zs^\top & s & t\end{matrix})$,
and set 
\begin{equation*}
	\Cb = \begin{pmatrix}
			e_n^\top & 0_n^\top & 1 & 0 \\
			e_n^\top & 0_n^\top & 0 & -1 \\
			I_n      & I_n & 0_n & 0_n
	\end{pmatrix}\in \R^{(n+2)\times (2n+2)} \hspace*{0.5cm} \text{and} \hspace*{0.5cm}\ 
	\db = \begin{pmatrix}
		\lfloor \frac n 2 \rfloor \\
		1  \\
		e_n
	\end{pmatrix} \in \R^{n+2}
\end{equation*}
to formulate the affine set as
$
	\mathcal L = \{ \xb \in \R^{2n + 2} : \Cb \xb = \db \}.
$
We further set the matrix variable~$\Yb$ and the vector variable~$\yb$
to be of the form
\begin{equation*}
\Yb =
\begin{pmatrix}
\Yb^{11} & \Yb^{12} & \Yb^{13} & \Yb^{14} \\
\Yb^{21} & \Yb^{22} & \Yb^{23} & \Yb^{24} \\
\Yb^{31} & \Yb^{32} & \Yb^{33} & \Yb^{34} \\
\Yb^{41} & \Yb^{42} & \Yb^{43} & \Yb^{44}
\end{pmatrix}
\in \symmmat{2n + 2}
\hspace*{0.5cm}
\text{and}
\hspace*{0.5cm}
\yb = \left(\begin{matrix} \yb^1 \\ \yb^2 \\ \yb^3 \\ \yb^4 \end{matrix}\right) \in \R^{2n+2},
\end{equation*}
that is $\Yb$ is a matrix given in block format with $\Yb^{11}, \Yb^{22} \in \symmmat{n}$
and $\Yb^{33}, \Yb^{44} \in \R$ and the vector~$\yb$ consists of $\yb^1, \yb^2 \in \R^n$
and $\yb^3, \yb^4 \in \R$.
For convenience of referring to specific parts of the matrix/vector, the structure
of~$\Yb \in \symmmat{2n + 2}$ and $\yb \in \R^{2n+2}$ is implicitly consistent throughout the rest
of this paper.

By Proposition~\ref{prop:conv-intersecwith-linear-eq} and
the remark after it holds that
\begin{equation}
	\label{eq:exact-form-2np3}
	\begin{aligned}
		h(G) =	\min        \quad & \langle \Lb , \Yb \rangle \\
		\text{s.t.} \quad & (\begin{matrix}
		e^\top_n & 0^\top_n & 0^\top_2
		\end{matrix}) \yb = 1\\
		&  \tr(\Cb \Yb \Cb^\top - \Cb \yb \db^\top - \db \yb^\top \Cb^\top + \rho \db\db^\top) = 0\\
		& \operatorname{diag}(\Yb^{12}) = 0 \\
		& \bigg(\begin{matrix}
		\Yb &  \yb \\
		\yb^\top & \rho
		\end{matrix}\bigg)\in \mathcal{CP}^{2n + 3},
	\end{aligned}
\end{equation}
where $\Lb = \operatorname{Diag}(\laplacian, 0_{n+2})$ or $\Lb = \frac 1 2
\operatorname{Diag}(I_2 \otimes \laplacian, 0_{2})$.
Note the similarity to the vector lifting procedure applied
in~\cite{wolkowicz-zhao1999} to the bisection problem. There, two
vectors, each indicating the vertices in the first and second
partition, respectively, are lifted into the space of $(2n + 1) \times
(2n + 1)$.

In the following sections of this paper, we are going
to consider the exact reformulation~\eqref{eq:exact-form-2np3}.

\section{A DNN relaxation of the reformulated problem}\label{sec:dnn}
To optimize over the cone of completely positive matrices is NP-hard.
But we can use the fact that every completely positive matrix is 
doubly non-negative, i.e., it is positive semidefinite and its entries are
non-negative, to derive a DNN relaxation of~\eqref{eq:exact-form-2np3}, which
is
\begin{equation}
\label{eq:sdp-relax-2np3}
\tag{DNN-P}
		\begin{aligned}
	\min        \quad & \langle \Lb , \Yb \rangle \\
	\text{s.t.} \quad & (\begin{matrix} e^\top_n & 0^\top_{n+2}
										\end{matrix}) \yb = 1\\
			&  \tr(\Cb \Yb \Cb^\top - \Cb \yb \db^\top - \db \yb^\top \Cb^\top + \rho \db\db^\top) = 0\\
			& \operatorname{diag}(\Yb^{12}) = 0 \\
			& \Yxl = \bigg(\begin{matrix}
			\Yb &  \yb \\
			\yb^\top & \rho
			\end{matrix}\bigg)\in \text{DNN}.
	\end{aligned}
\end{equation}

Let $M = (\begin{matrix}\Cb & - \db \end{matrix})$,
then we can rewrite the trace constraint in~\eqref{eq:sdp-relax-2np3} as
\begin{equation*}
	\tr(\Cb\Yb \Cb^\top - \Cb \yb \db^\top - 
	\db \yb^\top \Cb^\top + \rho \db \db^\top) = 
	\tr\big(
	M
	\Yxl
	M^\top
	\big) = 
	\tr\big(
		M^\top
		M
		\Yxl
		\big)
	= 0.
\end{equation*}
One can easily see that there exists no positive definite
matrix~$\Yxl$ fulfilling the trace constraint, as for any positive definite matrix~$\Yxl$ we have $\tr(M \Yxl M^\top) > 0$.
Hence,~\eqref{eq:sdp-relax-2np3} has no Slater point.
To obtain strict feasibility and reduce the dimension of the matrix variable~$\Yxl$, we perform a
facial reduction.

\subsection{Facial reduction}
In this section, we apply facial reduction to~\eqref{eq:sdp-relax-2np3} in order
to obtain a formulation with a Slater feasible point.
For this, we first state the following helpful proposition, which is a variant of Proposition~1
in~\cite{Burer2010} but with~$\rho$ in the bottom-right corner of~$\Yxl$ instead of~1.
\begin{proposition}%[Variant of Proposition 1 in~\cite{Burer2010}]
	\label{prop:MYMt}
	Let $M = ( \begin{matrix} \Cb & - \db \end{matrix} )$ and
	\begin{equation*}
		\Yxl = \bigg( \begin{matrix}
			\Yb & \yb \\
			\yb^\top & \rho
		\end{matrix} \bigg) \succeq 0,
	\end{equation*}
	then the following three statements are equivalent.
	\begin{enumerate}
		\item $\Cb\yb = \rho\db$ and $\diag(\Cb \Yb \Cb^\top) = \rho\db^2$, where the square 
			  is to be interpreted element-wise.
		\item $M \Yxl M^\top = 0$, or equivalently $\tr(M \Yxl M^\top) = 0$.
		\item $M \Yxl = 0$.
	\end{enumerate}
\end{proposition}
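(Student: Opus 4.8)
The plan is to prove the three statements equivalent by going around the cycle (1) $\Rightarrow$ (2) $\Rightarrow$ (3) $\Rightarrow$ (1). The one fact that does all the work is that $\Yxl \succeq 0$ forces $M\Yxl M^\top \succeq 0$, combined with the elementary observation that a positive semidefinite matrix whose trace vanishes — equivalently, whose diagonal vanishes — is the zero matrix. In particular this already settles the parenthetical claim in (2) that $M\Yxl M^\top = 0$ and $\tr(M\Yxl M^\top) = 0$ are equivalent.

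For (1) $\Rightarrow$ (2) I would expand $M\Yxl M^\top = \Cb\Yb\Cb^\top - \Cb\yb\db^\top - \db\yb^\top\Cb^\top + \rho\,\db\db^\top$ and inspect only its diagonal: the $i$-th entry is $\diag(\Cb\Yb\Cb^\top)_i - 2(\Cb\yb)_i\db_i + \rho\,\db_i^2$. Substituting $\Cb\yb = \rho\db$ from (1) collapses the cross term to $-2\rho\,\db_i^2$, and then $\diag(\Cb\Yb\Cb^\top) = \rho\,\db^2$ makes the whole diagonal vanish; since $M\Yxl M^\top \succeq 0$, it is therefore $0$. For (2) $\Rightarrow$ (3), write $\Yxl = BB^\top$ (e.g.\ $B = \Yxl^{1/2}$); then $M\Yxl M^\top = (MB)(MB)^\top$, and a matrix times its own transpose vanishes only when the matrix itself does, so $MB = 0$ and hence $M\Yxl = MBB^\top = 0$. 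The reverse implication (3) $\Rightarrow$ (2) is immediate, which is why (2) and (3) are equivalent.

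Finally, for (3) $\Rightarrow$ (1), partition $M\Yxl$ conformally with $M = ( \begin{matrix} \Cb & -\db \end{matrix} )$ to obtain $M\Yxl = ( \begin{matrix} \Cb\Yb - \db\yb^\top & \Cb\yb - \rho\db \end{matrix} )$; thus $M\Yxl = 0$ says exactly that $\Cb\Yb = \db\yb^\top$ and $\Cb\yb = \rho\db$. The second equation is the first half of (1), and multiplying the first on the right by $\Cb^\top$ and using the second gives $\Cb\Yb\Cb^\top = \db(\Cb\yb)^\top = \rho\,\db\db^\top$, whose diagonal is $\rho\,\db^2$. The only genuinely non-cosmetic use of positive semidefiniteness is the diagonal argument in (1) $\Rightarrow$ (2) — without it, (1) does not imply (3) — so I would keep the write-up brief and simply remark that this is the $\rho$-in-the-corner version of Proposition~1 in~\cite{Burer2010}.
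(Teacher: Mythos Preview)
Your proof is correct and follows essentially the same cycle (1) $\Rightarrow$ (2) $\Rightarrow$ (3) $\Rightarrow$ (1) as the paper, using the same two ingredients: the diagonal argument for positive semidefinite matrices and the factorization $\Yxl = BB^\top$. The only cosmetic difference is in (3) $\Rightarrow$ (1), where you compute $\Cb\Yb\Cb^\top = \rho\,\db\db^\top$ directly from $\Cb\Yb = \db\yb^\top$, whereas the paper instead multiplies $M\Yxl = 0$ by $M^\top$ and reads off the diagonal; your route is marginally cleaner but otherwise identical in spirit.
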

\begin{proof}
	Assume that $\Cb \yb = \rho\db$ and $\diag(\Cb\Yb\Cb^\top) = \rho\db^2$ holds.
	Let $c_i^\top$ denote the~$i$-th row of~$C$. It then holds that
	\begin{equation*}
		\roundbr{M\Yxl M^\top}_{ii} = \roundbr{\begin{matrix} c_i^\top & -\db_i \end{matrix}} \Yxl \roundbr[\bigg]{\begin{matrix} c_i \\ -d_i \end{matrix}} =
		c_i^\top \Yb c_i - 2 \db_i c_i^\top \yb + \rho \db_i^2 = \rho\db_i^2 - 2\rho\db_i^2 + \rho \db_i^2 = 0.
	\end{equation*}
	Hence, the diagonal and therefore also the trace of $M\Yxl M^\top$ is zero.
	Since~$\Yxl$ is positive semidefinite, also $M\Yxl M^\top$ is positive semidefinite, 
	leading to the conclusion that the equality $M\Yxl M^\top = 0$ has to hold.

	For the second part of the proof, assume  that $\tr(M\Yxl M^\top) = 0$.
	Since~$\Yxl \succeq 0$, we can write $\Yxl = UU^\top$ for some matrix $U$.
	Plugging in, we obtain
	\begin{equation*}
		0 = \tr(M \Yxl M^\top) = \tr(MUU^\top M^\top) = \norm{MU}^2,
	\end{equation*}
	which is equivalent to $MU = 0$ and therefor $M\Yxl = MUU^\top = 0$.

	Finally, assume that $M\Yxl = 0$ holds.
	To prove the equality $\Cb \yb = \rho \db$, we take a closer look at the last column of $M\Yxl$.
	The $i$-th entry in the last column is
	\begin{equation*}
		0 = \roundbr{\begin{matrix} c_i^\top & -\db_i \end{matrix}} \roundbr[\bigg]{\begin{matrix} \yb \\ \rho \end{matrix}} =
		    c_i^\top\yb - \rho \db_i,
	\end{equation*}
	yielding the desired equality $\Cb \yb = \rho \db$.
	From $M\Yxl = 0$ we get that $M\Yxl M^\top = 0$.
	To prove that $\diag(\Cb\Yb\Cb^\top) = \rho\db^2$ holds, we consider the diagonal of $M\Yxl M^\top$.
	For the $i$-th entry of the diagonal, it holds that
	\begin{equation*}
		0 = \roundbr{\begin{matrix} c_i^\top & -\db_i \end{matrix}} \Yxl \roundbr[\bigg]{\begin{matrix} c_i \\ -d_i \end{matrix}} = 
		    c_i^\top \Yb c_i - 2 \db_i c_i^\top \yb + \rho \db_i^2 = c_i^\top \Yb c_i - \rho \db_i^2.
	\end{equation*}
	Hence, $\diag(C\Yb C^\top)_i = c_i^\top \Yb c_i = \rho \db_i^2$ holds, which closes the proof.
\end{proof}
For the facial reduction of~\eqref{eq:sdp-relax-2np3}, let $W$ be a matrix
such that its columns form an orthonormal basis of the nullspace of~$M$.
It then holds that
\begin{equation}
	\{\Yxl \succeq 0 : M\Yxl = 0\} = \{WRW^\top : R \succeq 0\}.
\end{equation}
\begin{lemma}\label{lemma:basis}
The vectors
\begin{equation*}
w_{n+1} = \begin{pmatrix}
0_n\\
e_n\\
\lfloor \frac n 2 \rfloor\\
-1\\
1
\end{pmatrix} 
\text{ and }
w_i = \begin{pmatrix}
u_i\\
-u_i\\
-1\\
1\\
0
\end{pmatrix}
\text{ for } i \in \{1,\dots,n\}
\end{equation*}
form a basis of $\ker(M)$, where $u_i$ is the $i$-th unit vector with entry 1 at position~$i$ and 0 everywhere else.
\end{lemma}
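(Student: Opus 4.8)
The plan is to combine a dimension count for $\ker(M)$ with a direct check that the listed vectors lie in the kernel and are linearly independent. Recall $M = (\begin{matrix}\Cb & -\db\end{matrix}) \in \R^{(n+2)\times(2n+3)}$, and a vector in $\R^{2n+3}$ is read in the blocks $(\xs;\zs;s;t;\rho)$ of sizes $n,n,1,1,1$; the claimed vectors $w_i$ and $w_{n+1}$ indeed have this shape. First I would show $M$ has full row rank $n+2$, so that by rank--nullity $\dim\ker(M) = (2n+3)-(n+2) = n+1$. For this, in a vanishing linear combination $\alpha_1 R_1 + \alpha_2 R_2 + \sum_{j=1}^n \beta_j R_{2+j} = 0$ of the rows of $M$ (the first two rows coming from the two inequality constraints, the remaining $n$ from the block $(\begin{matrix} I_n & I_n & 0_n & 0_n\end{matrix})$ of $\Cb$), I would first inspect the columns of the $\zs$-block: only $R_{2+j}$ has a nonzero entry in column $n+j$, forcing $\beta_j = 0$ for all $j$; then the columns of the slacks $s$ and $t$ isolate $R_1$ and $R_2$ respectively, forcing $\alpha_1 = \alpha_2 = 0$.

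Next I would verify membership in the kernel by a short block computation. For $w_i = (u_i;-u_i;-1;1;0)$ the first two rows of $M$ evaluate to $e_n^\top u_i - 1 = 0$, and each row from $(\begin{matrix} I_n & I_n\end{matrix})$ gives $u_j^\top u_i - u_j^\top u_i = 0$. For $w_{n+1} = (0_n;e_n;\lfloor n/2\rfloor;-1;1)$ the first two rows give $\lfloor n/2\rfloor - \lfloor n/2\rfloor = 0$ and $1 - 1 = 0$, while each remaining row gives $u_j^\top e_n - 1 = 0$. Hence all $n+1$ vectors lie in $\ker(M)$.

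Finally, I would establish linear independence by reading off only the top $n$ coordinates: from $\sum_{i=1}^n \lambda_i w_i + \lambda_{n+1} w_{n+1} = 0$, the $\xs$-block gives $(\lambda_1,\dots,\lambda_n)^\top = 0$ since $w_{n+1}$ has zero $\xs$-block, and then $\lambda_{n+1} w_{n+1} = 0$ with $w_{n+1}\neq 0$ gives $\lambda_{n+1} = 0$. Having $n+1$ linearly independent vectors in the $(n+1)$-dimensional space $\ker(M)$, they form a basis. I do not expect a genuine obstacle; the only mildly delicate point is the rank-$(n+2)$ claim for $M$, which is why I would spell out the $\zs$-block and slack-column argument rather than assert full row rank.
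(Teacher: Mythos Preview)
Your proposal is correct and follows essentially the same approach as the paper: a rank--nullity count to get $\dim\ker(M)=n+1$, together with the direct verification that $w_1,\dots,w_{n+1}$ lie in $\ker(M)$ and are linearly independent. The paper merely asserts that $M$ has rank $n+2$ and that the two checks are ``easy,'' whereas you spell out the row-independence argument via the $\zs$- and slack-columns and the linear-independence argument via the $\xs$-block; there is no substantive difference.
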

\begin{proof}
  Since the rank of $M$ is $n + 2$, the dimension of $\ker(M)$ is $(2n + 3) - (n + 2) = n + 1$.
  Moreover, it is easy to check that the vectors $w_1,\dots,w_{n+1}$ are linearly independent
  and satisfy $Mw_i = 0$ for $i \in [n + 1]$.
\end{proof}
To obtain the required matrix $W\in \R^{(2n + 3) \times (n + 1)}$, one can take as columns of~$W$
the orthonormalized basis vectors
from \Cref{lemma:basis}.
We can then rewrite~\eqref{eq:sdp-relax-2np3} as
\begin{equation}
\label{eq:sdp-relax-facialred}
\tag{DNN-PFR}
\begin{aligned}
\min        \quad & \langle \Lxl , \Yxl \rangle \\
\text{s.t.} \quad & (\begin{matrix} e^\top_n & 0^\top_{n+2}
\end{matrix}) \yb = 1\\
& \operatorname{diag}(\Yb^{12}) = 0 \\
& \Yxl  =  
\bigg(\begin{matrix}
\Yb &  \yb \\
\yb^\top & \rho
\end{matrix}\bigg) = WRW^\top \geq 0\\
& R \succeq 0,
\end{aligned}
\end{equation}
with $\Lxl = \operatorname{Diag}(\Lb,0_{n+3})$.
We will show later in this section in~\Cref{prop:objective-bigsdp-same} that one can also equivalently
set~$\Lxl = \frac{1}{2}\operatorname{Diag}(I_2 \otimes L, 0_{3})$.
The reformulation~\eqref{eq:sdp-relax-facialred} is indeed a strictly feasible formulation of~\eqref{eq:sdp-relax-2np3}.
To prove this, we apply the following result of Hu, Sotirov, and Wolkowicz~\cite{Hu2023}
on the existence of a Slater feasible point for DNN relaxations.
\begin{theorem}[Theorem 3.15 in~\cite{Hu2023}]%
	\label{thm:slater-facialred-hu}
	Let
	\begin{equation*}
		\mathcal{Q} = \set[\Bigg]{\xb \in \R^{2n + 2} : 
							\mathcal{A}\roundbr[\bigg]{\roundbr[\bigg]{
								\begin{matrix}
									\xb \xb^\top & \xb \\
									\xb^\top & 1
								\end{matrix}
							}} = 0,\ x \geq 0},
	\end{equation*}
	where $\mathcal{A}$ is a linear transformation,
	and suppose $\aff\roundbr{\conv\roundbr{\mathcal{Q}}} = \mathcal{L}$
	with $\dim\roundbr{\mathcal{L}} = n$. Then there exist $\Cb$ with
	full row rank and $\db$ such that
	\begin{equation*}
		\mathcal{L} = \set[\big]{\xb \in \R^{2n+2} : \Cb\xb = \db}.
	\end{equation*}
	Let $M = \roundbr{\begin{matrix} \Cb & -\db \end{matrix}}$ and $W$
	be a matrix such that its columns form a basis of $\ker\roundbr{M}$.
	Let further~$\mathcal{J} = \set[\big]{(i,j) : \xb_{i}\xb_{j} = 0 \ 
			\forall \xb \in \mathcal{Q}	}$
	and let $\mathcal{J}^\mathrm{c}$ be its complement.
	Then there exists a Slater point $\hat R$ for the set
	\begin{equation*}
		\hat{\mathcal{Q}}_R = \set[\Big]{R \in \symmmat{n+1} : R \succeq 0,\ 
				\roundbr[\big]{WRW^\top}_{\mathcal{J}^\mathrm{c}} \geq 0,\ 
				\roundbr[\big]{WRW^\top}_{\mathcal{J}} = 0,\ 
				\mathcal{A}\roundbr[\big]{WRW^\top} = 0}
	\end{equation*}
	of feasible points.
\end{theorem}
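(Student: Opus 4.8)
The plan is to construct a Slater point of $\hat{\mathcal Q}_R$ explicitly, as a strictly positive combination of homogenized rank-one matrices of finitely many carefully chosen points of $\mathcal Q$, pulled back through $W$. For $x \in \R^{2n+2}$ write $z(x) \coloneqq (x^\top,\,1)^\top \in \R^{2n+3}$ and $Z(x) \coloneqq z(x)z(x)^\top$, so that $Z(x)$ is exactly the matrix on which $\mathcal A$ acts in the definition of $\mathcal Q$. Since $\aff(\conv(\mathcal Q)) = \mathcal L = \set{\xb \in \R^{2n+2} : \Cb\xb = \db}$, every $x \in \mathcal Q$ lies in $\mathcal L$, hence $M z(x) = \Cb x - \db = 0$ and $z(x) \in \ker(M) = \operatorname{range}(W)$. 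As $W$ has full column rank, there is a unique $v(x) \in \R^{n+1}$ with $W v(x) = z(x)$, and then $Z(x) = W\, v(x)v(x)^\top W^\top$ with $v(x)v(x)^\top \succeq 0$ and $\mathcal A(Z(x)) = 0$ because $x \in \mathcal Q$.

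I would then fix a finite set $S \subseteq \mathcal Q$ of witnesses as follows. Since $\aff(\mathcal Q) = \aff(\conv(\mathcal Q)) = \mathcal L$ and $\dim(\mathcal L) = n$, the set $\mathcal Q$ contains $n+1$ affinely independent points $x^{(0)}, \dots, x^{(n)}$; their homogenizations are linearly independent and lie in $\ker(M)$, so $v(x^{(0)}), \dots, v(x^{(n)})$ are $n+1$ linearly independent vectors in $\R^{n+1}$ and hence a basis. In addition, by the definition of $\mathcal J^{\mathrm c}$, for each $(i,j) \in \mathcal J^{\mathrm c}$ we may choose $x^{(i,j)} \in \mathcal Q$ with $x^{(i,j)}_i x^{(i,j)}_j > 0$. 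Let $S$ collect all these (finitely many) points and set
\[
	\hat R \coloneqq \sum_{x \in S} \lambda_x\, v(x)v(x)^\top, \qquad \lambda_x > 0 .
\]
It remains to check the four defining conditions of a Slater point of $\hat{\mathcal Q}_R$. We have $\hat R \succ 0$, since the ranges of the rank-one terms for $x^{(0)},\dots,x^{(n)}$ already span $\R^{n+1}$ and adding further positive semidefinite terms preserves positive definiteness. The two equality constraints hold term by term, using $W\hat R W^\top = \sum_{x\in S}\lambda_x Z(x)$: indeed $\mathcal A(W\hat R W^\top) = \sum_{x\in S}\lambda_x\mathcal A(Z(x)) = 0$ because every $x\in S$ lies in $\mathcal Q$, and $(W\hat R W^\top)_{\mathcal J} = \sum_{x\in S}\lambda_x (Z(x))_{\mathcal J} = 0$ because $(Z(x))_{ij} = x_ix_j = 0$ for $(i,j)\in\mathcal J$ and all $x\in\mathcal Q$. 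Finally, for $(i,j)\in\mathcal J^{\mathrm c}$ each summand $\lambda_x x_i x_j$ is nonnegative and the one coming from $x^{(i,j)}$ is strictly positive, so $(W\hat R W^\top)_{\mathcal J^{\mathrm c}} > 0$; in particular $W\hat R W^\top \ge 0$. Hence $\hat R$ is a Slater point.

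The only substantive issue is this coordination step: $S$ must be chosen large enough to make $\hat R$ positive definite \emph{and} simultaneously to activate strict positivity on every entry indexed by $\mathcal J^{\mathrm c}$, whereas the equalities $(W\hat R W^\top)_{\mathcal J} = 0$ and $\mathcal A(W\hat R W^\top) = 0$ come for free because each rank-one building block already lies in the relevant subspace. Apart from that, the argument only relies on a dimension count---$\dim\ker(M) = (2n+3) - \operatorname{rank}(M) = n+1$, which matches both the order of $R$ and the number of affinely independent points used---and routine bookkeeping around the homogenizing coordinate, namely that the trailing~$1$ in $z(x)$ is consistent with $M z(x) = 0$ and with the explicit basis of $\ker(M)$ from \Cref{lemma:basis}.
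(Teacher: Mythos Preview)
The paper does not prove this statement; it is quoted verbatim as Theorem~3.15 of Hu, Sotirov and Wolkowicz~\cite{Hu2023} and used as a black box to establish \Cref{thm:slater-facialred-ours}. So there is no ``paper's own proof'' to compare against.

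That said, your argument is precisely the standard one behind such Slater-point results and is correct. Picking $n+1$ affinely independent points of $\mathcal Q$ (available because $\aff(\mathcal Q)=\mathcal L$ has dimension $n$) makes the corresponding $v(x)$'s a basis of $\R^{n+1}$, so $\sum \lambda_x v(x)v(x)^\top \succ 0$; augmenting by one witness per $(i,j)\in\mathcal J^{\mathrm c}$ forces strict positivity on those entries; and both equality constraints are satisfied term by term since each $Z(x)$ with $x\in\mathcal Q$ already lies in the correct subspace. Two small remarks: the reference to \Cref{lemma:basis} is out of place here, since that lemma is specific to the edge-expansion instance whereas the theorem is stated for a general $\mathcal A$; and strictly speaking you should also note that the last row and column of $W\hat R W^\top$ (the homogenizing coordinate) are handled automatically, since $(Z(x))_{i,2n+3}=x_i\ge 0$ and $(Z(x))_{2n+3,2n+3}=1>0$, with strict positivity of $(W\hat R W^\top)_{i,2n+3}$ following from the witness $x^{(i,i)}$ whenever $(i,i)\in\mathcal J^{\mathrm c}$.
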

With the help of this, we can now state the following result on Slater feasibility.
\begin{theorem}%
	\label{thm:slater-facialred-ours}
	Relaxation~\eqref{eq:sdp-relax-facialred} has a Slater feasible point.
\end{theorem}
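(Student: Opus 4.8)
The plan is to obtain \eqref{eq:sdp-relax-facialred} as an instance of the facially reduced program appearing in \Cref{thm:slater-facialred-hu} and to read off the Slater point from there. First I would fix the set $\mathcal Q$ of that theorem by taking the linear map $\mathcal A$ to encode both the equality $\Cb\xb=\db$ (written in the trace form $\tr(M\Yxl M^\top)=0$, with $M=(\begin{matrix}\Cb & -\db\end{matrix})$ as in \Cref{prop:MYMt}) and the complementarity relations $\xb_i\xb_{n+i}=0$ for $i\in[n]$. Since $\xb\ge 0$ together with $\Cb\xb=\db$ forces $\zs=e-\xs$, $s=\lfloor n/2\rfloor-e^\top\xs\ge 0$, $t=e^\top\xs-1\ge 0$ and $0\le\xs_i\le 1$, while $\xb_i\xb_{n+i}=\xs_i(1-\xs_i)=0$ forces $\xs_i\in\{0,1\}$, the set $\mathcal Q$ is exactly the set of integer-feasible solutions of \eqref{eq:fracprobl-edgeexp} augmented by the (then uniquely determined) slack variables; it is nonempty, since $\xs=u_1$ works.

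Next I would verify the hypotheses of \Cref{thm:slater-facialred-hu}. The matrix $\Cb$ has full row rank $n+2$ (as used in the proof of \Cref{lemma:basis}), so $\dim\mathcal L=(2n+2)-(n+2)=n$ for $\mathcal L=\{\xb:\Cb\xb=\db\}$. For the affine hull, the $\xs$-coordinates occurring in $\mathcal Q$ include all unit vectors $u_i$ and, whenever $\lfloor n/2\rfloor\ge 2$, all sums $u_i+u_j$, whose affine hull is all of $\R^n$: the $u_i$ already span the hyperplane $e^\top\xs=1$, and any weight-two point leaves it. Since $\zs,s,t$ are affine functions of $\xs$, the set $\aff(\conv(\mathcal Q))$ is the graph of this affine map, hence an $n$-dimensional affine subspace contained in $\mathcal L$, so $\aff(\conv(\mathcal Q))=\mathcal L$. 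Thus $\Cb$, $\db$ and the matrix $W$ from \Cref{lemma:basis} form a legitimate choice of the data produced by \Cref{thm:slater-facialred-hu}.

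It then remains to identify $\mathcal J=\{(i,j):\xb_i\xb_j=0 \text{ for all } \xb\in\mathcal Q\}$. I would show $\mathcal J=\{(i,n+i):i\in[n]\}$ by exhibiting, for every other pair $(i,j)$, a single integer-feasible $\xs$ with $\xb_i\xb_j>0$: a weight-one choice handles pairs involving an $\xs$-index, a $\zs$-index, or the slack $s$; a weight-two choice those involving $t$; and a weight strictly between $1$ and $\lfloor n/2\rfloor$ the pair $(s,t)$. With this, the constraint $(WRW^\top)_{\mathcal J}=0$ of $\hat{\mathcal Q}_R$ is precisely $\diag(\Yb^{12})=0$, the semidefinite and nonnegativity constraints coincide, the $\mathcal A$-part $MW=0$ is automatic, and the normalization $(\begin{matrix}e_n^\top & 0_{n+2}^\top\end{matrix})\yb=1$ is obtained by scaling $R$; hence the Slater point $\hat R$ guaranteed by \Cref{thm:slater-facialred-hu} is a Slater feasible point of \eqref{eq:sdp-relax-facialred}. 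Alternatively, one can build $\hat R$ by hand as $\sum_k\lambda_k p_kp_k^\top$ with $\lambda_k>0$, where the vectors $Wp_k$ are the $\rho$-scaled liftings of enough integer solutions chosen so that the $p_k$ span $\R^{n+1}$; then $WRW^\top=\sum_k\lambda_k(Wp_k)(Wp_k)^\top\ge 0$ has a strictly positive entry in position $(i,j)$ exactly when $(i,j)\notin\mathcal J$, and vanishes on $\diag(\Yb^{12})$.

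I expect the main obstacle to be the index-set computation together with the degenerate small cases: when $\lfloor n/2\rfloor\le 2$, i.e.\ $n\in\{3,4,5\}$, further products (for instance $st$, or for $n=3$ essentially all of them) are identically zero on $\mathcal Q$, so \Cref{thm:slater-facialred-hu} either fails to apply ($n=3$, where $\dim\aff(\conv(\mathcal Q))<n$) or its Slater point for $\hat{\mathcal Q}_R$ carries extra vanishing entries. These cases must then be treated directly, or $\hat R$ must be perturbed slightly so that the entries of $WRW^\top$ indexed by $\mathcal J\setminus\{(i,n+i):i\in[n]\}$ become strictly positive while $R\succ 0$, the prescribed zeros, and the normalization are preserved; the point to check is that those entries are not linearly forced to zero by the constraint $\diag(\Yb^{12})=0$, which holds because they are distinct linear functionals of $R$.
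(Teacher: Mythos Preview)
Your proposal follows the same strategy as the paper: set up $\mathcal Q$, apply \Cref{thm:slater-facialred-hu} with the data $\Cb$, $\db$, $W$, verify $\aff(\conv(\mathcal Q))=\mathcal L$, identify $\mathcal J$ as the complementarity pairs $\{(i,n+i)\}$ so that $(WRW^\top)_{\mathcal J}=0$ becomes $\diag(\Yb^{12})=0$, observe that the $\mathcal A$-constraints are automatic on the range of $W$, and finally rescale the resulting $\hat R$ by the positive scalar $\kappa=\sum_i(W\hat RW^\top)_{i,2n+3}$ to enforce $e_n^\top\yb^1=1$. This matches the paper's proof essentially step by step.

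The one genuine difference is the argument for $\aff(\conv(\mathcal Q))=\mathcal L$. You argue by dimension: the feasible $\xs$-values contain all unit vectors and, for $\lfloor n/2\rfloor\ge 2$, a weight-two point, so their affine hull is $\R^n$; since $(\zs,s,t)$ are affine in $\xs$, the lift is an $n$-dimensional affine subspace of the $n$-dimensional $\mathcal L$, hence equals it. The paper instead shows that $\Cb$ is totally unimodular (via Ghouila--Houri), deduces $\conv(\mathcal Q)=\{\xb\ge 0:\Cb\xb=\db\}$, and then reads off the affine hull from the implicit equalities of this polyhedron. Both routes work; yours is more elementary, the paper's exploits the combinatorial structure of $\Cb$. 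Your explicit discussion of the degenerate cases $n\in\{3,4,5\}$---where $\mathcal J$ is strictly larger than the complementarity pairs (e.g.\ $st\equiv 0$ when $\lfloor n/2\rfloor\le 2$), and for $n=3$ the affine-hull hypothesis itself fails---is a real issue that the paper glosses over; your proposed perturbation or direct construction would indeed be needed to complete those small cases.
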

\begin{proof}
Let $\xb^\top = (\begin{matrix}\xs^\top & \zs^\top & s & t\end{matrix})$, then
\begin{align*}
\mathcal{Q} &= \set[\Big]{\xb \in \R^{2n+2}_{\geq 0} : 
\xs_i \zs_i = 0\ \forall i \in [n],\ 
\xs + \zs = e_n,\ 
e_n^\top\xs + s = \floor[\Big]{\frac n 2},\ 
e_n^\top\xs - t = 1
} \\
	&=  \set[\Big]{\xb \in \Z^{2n+2}_{\geq 0} : 
			\xs + \zs = e_n,\ 
	%		\xs_i \zs_i = 0,\ 
			e_n^\top\xs + s = \floor[\Big]{\frac n 2},\ 
			e_n^\top\xs - t = 1
			} \\
		&= \set[\Big]{
			\xb \in \Z^{2n+2}_{\geq 0} : \Cb \xb = \db
		}.
\end{align*}
With Ghouila-Houri’s characterization of totally unimodular matrices
one can show that~$\Cb$ is totally unimodular and hence
$\conv\roundbr{\mathcal{Q}} = \set[\Big]{
			\xb \in \R^{2n+2}_{\geq 0} : \Cb \xb = \db}$.
One can further show that
\[\aff\roundbr{\conv\roundbr{\mathcal{Q}}} = \set[\big]{
			\xb \in \R^{2n+2} : \Cb \xb = \db},\]
which follows from the fact that
$\aff\roundbr{\set{x \in \R^{2n+2} : Ax \leq b}} = 
\set{x \in \R^{2n+2} : A^=x = b^=}$.
From the constraints we can see that the index set $\mathcal{J}$ (see \Cref{thm:slater-facialred-hu})
is $\mathcal{J} = \set{(i,n+i) : 1 \leq i \leq n} \cup 
\set{(n+j,j) : 1 \leq j \leq n}$.
In $\mathcal{A}\roundbr{X}$ we have the constraints $\Cb X_{:,(2n+3)} = \db$
and $X_{i,(n+i)} = 0$ for all $1 \leq i \leq n$.
On the matrices of the form $WRW^\top$,
the first type of constraints is redundant, since $M\roundbr{WRW^\top}_{:,(2n + 3)} = 0$
holds for all matrices $R$ as the columns of $W$ span the kernel of $M$.

The second kind of constraints is $WRW^\top_\mathcal{J} = 0$.
Hence, by~\Cref{thm:slater-facialred-hu}, there exists a matrix $\hat R \in \symmmat{n+1}$ such that
$\hat R \succ 0$, $(W\hat RW^\top)_{\mathcal{J}^\mathrm{c}} > 0$ and 
$\roundbr[\big]{W\hat RW^\top}_{\mathcal{J}} = 0$.
Let \[\kappa = \sum_{i = 1}^n (W\hat RW^\top)_{i,(2n+3)} > 0,\] then $\frac 1 \kappa \hat R$
is a strictly feasible solution of~\eqref{eq:sdp-relax-facialred}.
\end{proof}

\Cref{prop:MYMt} was not only helpful for the facial reduction, but we can also use it
to identify several redundant constraints, as shown in the next section.
\subsection{Properties of~\eqref{eq:sdp-relax-2np3} and~\eqref{eq:sdp-relax-facialred}}
In this section, we state several properties of~\eqref{eq:sdp-relax-2np3} and~\eqref{eq:sdp-relax-facialred}.
Some of these properties are useful for the algorithm that we derive in \Cref{sec:auglag}.

As already mentioned, we make in particular use of the
result from~\Cref{prop:MYMt} stating that every feasible solution of~\eqref{eq:sdp-relax-2np3}
satisfies $M \Yxl = 0$ and	$M \Yxl M^\top  = 0$.
Note, that for feasible solutions~$WRW^\top$ of~\eqref{eq:sdp-relax-facialred} it holds that~$MWRW^\top = 0$ and hence all
results in this section apply to both~$\Yxl$ and $WRW^\top$.
The first result focuses on the last row/column of feasible matrices,
in particular bounds on~$\rho$ and connections between the entries in~$\yb$
are given.
\begin{proposition}
	\label{prop:bounds-rho}
	Every feasible point of~\eqref{eq:sdp-relax-2np3} and~\eqref{eq:sdp-relax-facialred} satisfies
	\begin{equation*}
	\frac{1}{\lfloor \frac n 2 \rfloor} \leq \rho \leq 1.
	\end{equation*}
	Moreover, it holds that $\yb^1 \leq \rho$, $\yb^2 \leq \rho$, $\yb^2 = \rho e_n - \yb^1$,
	$\yb^3 = \rho \lfloor \frac n 2 \rfloor - 1$, and
	$\yb^4 = 1 - \rho$.
\end{proposition}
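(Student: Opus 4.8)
The plan is to use Proposition~\ref{prop:MYMt}, which tells us that any feasible $\Yxl$ (and any $WRW^\top$) satisfies $\Cb\yb = \rho\db$ together with $\diag(\Cb\Yb\Cb^\top) = \rho\db^2$. Writing out $\Cb\yb = \rho\db$ block by block, using the explicit form of $\Cb$ and $\db$, immediately yields the linear relations: the last block of rows $I_n\yb^1 + I_n\yb^2 = \rho e_n$ gives $\yb^2 = \rho e_n - \yb^1$; the first row $e_n^\top\yb^1 + \yb^3 = \rho\lfloor n/2\rfloor$ combined with the normalization $e_n^\top\yb^1 = 1$ (which is exactly the constraint $(e_n^\top\ 0_{n+2}^\top)\yb = 1$) gives $\yb^3 = \rho\lfloor n/2\rfloor - 1$; and the second row $e_n^\top\yb^1 - \yb^4 = \rho$ gives $\yb^4 = 1 - \rho$. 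So four of the five asserted equalities are immediate from the block structure of $\Cb\yb = \rho\db$ plus the normalization constraint.

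Next I would establish the bounds on $\rho$ and the sign conditions $\yb^1 \le \rho$, $\yb^2 \le \rho$. Since $\Yxl \ge 0$, all entries of $\yb$ are non-negative; in particular $\yb^3 \ge 0$ and $\yb^4 \ge 0$. Substituting the just-derived expressions, $\yb^3 = \rho\lfloor n/2\rfloor - 1 \ge 0$ forces $\rho \ge 1/\lfloor n/2\rfloor$, and $\yb^4 = 1 - \rho \ge 0$ forces $\rho \le 1$. That gives the displayed bounds. For $\yb^1 \le \rho$: from $\diag(\Cb\Yb\Cb^\top) = \rho\db^2$ applied to the rows of the $I_n$ block of $\Cb$, the corresponding diagonal entry is $(\Yb^{11} + \Yb^{12} + \Yb^{21} + \Yb^{22})_{ii} = \rho (e_n)_i^2 = \rho$; using $\diag(\Yb^{12}) = 0$ (the model constraint) this reads $\Yb^{11}_{ii} + \Yb^{22}_{ii} = \rho$. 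Since $\Yxl \succeq 0$, its principal $2\times 2$ minor on indices $i$ and $2n+3$ is non-negative, giving $\Yb^{11}_{ii} \ge (\yb^1_i)^2 \ge 0$ (in fact $\rho\,\Yb^{11}_{ii}\ge(\yb^1_i)^2$, but non-negativity of the diagonal already suffices here); hence $\yb^1_i \le \Yb^{11}_{ii}\cdot$\dots\ — more cleanly, $\rho\yb^1_i \le \rho\,\Yb^{11}_{ii} \le \rho(\Yb^{11}_{ii}+\Yb^{22}_{ii}) = \rho^2$, wait, I should instead use the diagonal constraint directly. The clean argument: from $\diag(\Yxl)$, the $2\times2$ PSD minor on $\{i,\,2n+3\}$ gives $\Yb^{11}_{ii}\rho \ge (\yb^1_i)^2$; combined with $\Yb^{11}_{ii} \le \rho$ (since $\Yb^{11}_{ii}+\Yb^{22}_{ii}=\rho$ and $\Yb^{22}_{ii}\ge 0$) we get $(\yb^1_i)^2 \le \rho^2$, so $\yb^1_i \le \rho$; then $\yb^2_i = \rho - \yb^1_i \le \rho$ follows from $\yb^1_i \ge 0$.

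The only mild obstacle is bookkeeping: making sure the row/column indexing of $\Cb$, $\db$, $\yb$, $\Yb$ is handled consistently so that the block identities $\Cb\yb = \rho\db$ and $\diag(\Cb\Yb\Cb^\top) = \rho\db^2$ are translated into the right coordinate statements, and that the model constraints $(e_n^\top\ 0_{n+2}^\top)\yb = 1$ and $\diag(\Yb^{12}) = 0$ are invoked at the right places (the former to pin down $\yb^3$, the latter to simplify the diagonal identity for $\yb^1 \le \rho$). I would also note explicitly that since every feasible $WRW^\top$ of~\eqref{eq:sdp-relax-facialred} satisfies $MWRW^\top = 0$ by construction of $W$, Proposition~\ref{prop:MYMt} applies verbatim to it, so the proof covers both relaxations simultaneously. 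No step requires anything beyond elementary PSD-minor inequalities and the already-proven Proposition~\ref{prop:MYMt}.
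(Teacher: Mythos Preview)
Your proof is correct and follows essentially the same route as the paper: invoke Proposition~\ref{prop:MYMt} to get $\Cb\yb=\rho\db$, read off the block identities for $\yb^2,\yb^3,\yb^4$, and use non-negativity of $\yb^3,\yb^4$ for the bounds on~$\rho$. The one place you diverge is the bound $\yb^1\le\rho$: you go through $\diag(\Cb\Yb\Cb^\top)=\rho\db^2$ and a $2\times2$ PSD minor, whereas the paper (and your own argument for $\yb^2\le\rho$) simply uses $\yb^2=\rho e_n-\yb^1\ge 0$, which immediately gives $\yb^1\le\rho e_n$; your detour works but is unnecessary.
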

\begin{proof}
	From the last column of the matrix equality $M\Yxl = 0$ it follows that
	$\Cb \yb = \rho \db$ and hence
	\begin{align*}
	(\begin{matrix} e^\top_n & 0^\top_n & 1 & \phantom{-}0 \end{matrix}) \yb &=
	\rho \Big\lfloor \frac n 2 \Big\rfloor\\
	(\begin{matrix} e^\top_n & 0^\top_n & 0 & -1 \end{matrix}) \yb &=
	\rho \\
	(\begin{matrix} I_n & I_n & 0_n & 0_n \end{matrix}) \yb &= \rho e_n
	\end{align*}
	holds. Remember, that for simplicity we denoted $\yb = (\begin{matrix}\yb^1 & \yb^2 & \yb^3 & \yb^4\end{matrix})$
	with $\yb^1, \yb^2 \in \R^n$ and $\yb^3, \yb^4 \in \R$.
	From the constraint $e_n^\top \yb^1 = 1$ and the equations above, it follows 
	that~$\yb^3 = \rho \lfloor \frac n 2 \rfloor - 1$. By the non-negativity of $\yb^3$,
	we get that $\rho \geq \frac{1}{\lfloor \frac n 2 \rfloor}$ has to hold.
	With the same argumentation, we get that $\yb^4 = 1 - \rho \geq 0$ holds for
	every feasible point, proving the claimed upper bound on~$\rho$.
	From the last of the above derived constraints and non-negativity, we get
	$\yb^1 = \rho e_n - \yb^2 \geq 0$ and $\yb^2 = \rho e_n - \yb^1 \geq 0$,
	which implies that~$\yb^1 \leq \rho$ and $\yb^2 \leq \rho$ has to be satisfied.
\end{proof}

Similarly, we can show the following connections between the submatrices~$\Yb^{11}$, $\Yb^{22}$ and~$\Yb^{12}$ and the
vectors~$\yb^1$ and~$\yb^2$.
\begin{proposition}\label{prop:Y11plusY21}
	Every feasible solution~$\Yxl$ of~\eqref{eq:sdp-relax-2np3} and~\eqref{eq:sdp-relax-facialred} satisfies
	\begin{align*}
		\Yb^{11}_{ij} + (\Yb^{12})^\top_{ij} &= \yb^1_j \text{ and }\\
		\Yb^{22}_{ij} + \Yb^{12}_{ij} &= \yb^2_j
	\end{align*}
	for all $1 \leq i,j \leq n$.
\end{proposition}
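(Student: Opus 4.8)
The plan is to read the two identities directly off the matrix equation $M\Yxl = 0$, which holds for every feasible point by \Cref{prop:MYMt} (and, as already noted, also for feasible matrices of the form $WRW^\top$ of~\eqref{eq:sdp-relax-facialred}, since $MWRW^\top = 0$). The point is that the block structure of $M = (\,\Cb \mid -\db\,)$ already encodes the slack equations $\xs + \zs = e_n$ built into~$\Cb$, so the rows of $M\Yxl = 0$ coming from that part of~$\Cb$ must reproduce exactly the claimed linear relations among the blocks of~$\Yxl$.

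Concretely, I would first isolate the last $n$ rows of $M$: from the definitions of $\Cb$ and $\db$ these form the $n \times (2n+3)$ block $(\,I_n \mid I_n \mid 0_n \mid 0_n \mid -e_n\,)$, where the last column is $-e_n$ because the bottom $n$ entries of $\db$ equal $e_n$. Multiplying this block by~$\Yxl$ and using the agreed block partition of~$\Yxl$ (with $\Yb^{11},\Yb^{22}\in\symmmat n$, $\yb^1,\yb^2\in\R^n$, and the last row equal to $((\yb^1)^\top,(\yb^2)^\top,\yb^3,\yb^4,\rho)$), the equation $M\Yxl = 0$ says that for every $i \in [n]$ the $i$-th row of~$\Yxl$ plus its $(n+i)$-th row equals its last row. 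Restricting this row identity to columns $1,\dots,n$ gives $\Yb^{11}_{ij} + \Yb^{21}_{ij} = \yb^1_j$ for all $i,j\in[n]$, and restricting it to columns $n+1,\dots,2n$ gives $\Yb^{12}_{ij} + \Yb^{22}_{ij} = \yb^2_j$. Since $\Yxl$ is symmetric we have $\Yb^{21} = (\Yb^{12})^\top$, and substituting this into the first relation yields the two stated equalities.

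I do not expect a genuine obstacle: the statement is a direct consequence of \Cref{prop:MYMt}, and the only care required is the bookkeeping, i.e.\ correctly identifying which rows of $M$ carry the constraint $\xs+\zs=e_n$ and matching the last column $-e_n$ of those rows with the $\yb$-part (last row and column) of~$\Yxl$. One could instead argue directly from $\Cb\yb = \rho\db$ and $\diag(\Cb\Yb\Cb^\top) = \rho\db^2$, but that route only exposes diagonal information about $\Cb\Yb\Cb^\top$, whereas the needed off-diagonal relations fall out cleanly from the single identity $M\Yxl = 0$.
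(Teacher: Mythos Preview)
Your proposal is correct and follows exactly the paper's own argument: both use \Cref{prop:MYMt} to obtain $M\Yxl = 0$ and then read off the two identities from the last $n$ rows of $M$ (the rows encoding $\xs+\zs=e_n$) applied to the first $n$ and the next $n$ columns of~$\Yxl$, respectively. Your exposition is in fact more careful than the paper's, which contains a typo in the row index (it writes $(2n+i)$ where it should be $(2+i)$).
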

\begin{proof}
	By considering the equations
	\begin{align*}
	0 &= (M\Yxl)_{(2n + i),j}
	= \Yb^{11}_{ij} + \Yb^{21}_{ij} - \yb^1_j
	\qquad\  \text{and}\\
	0 &= (M\Yxl)_{\roundbr{2n + i},(n + j)} =
	\Yb^{12}_{ij} + \Yb^{22}_{ij} - \yb^2_j
	\end{align*}
	for $1 \leq  i,j \leq n$, one immediately obtains the above stated result.	
\end{proof}

We can further show that the bounds on the sum of the entries in~$\Yb^{11}$ are already implied
by the constraints in~\eqref{eq:sdp-relax-2np3} and~\eqref{eq:sdp-relax-facialred} as well.
\begin{proposition}
	\label{prop:sumY11-inequality-implied}
	Every feasible point of~\eqref{eq:sdp-relax-2np3} and~\eqref{eq:sdp-relax-facialred}
	satisfies
	\begin{equation*}
	1 \leq
	\langle E, \Yb^{11} \rangle \leq \Big\lfloor \frac n 2 \Big\rfloor.
	\end{equation*}
\end{proposition}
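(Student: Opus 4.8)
The plan is to derive the two bounds on $\langle E, \Yb^{11}\rangle$ from the relations already established, rather than from the explicit inequality constraints of the original problem. The starting point is \Cref{prop:MYMt}, which tells us that every feasible $\Yxl$ satisfies $\diag(\Cb\Yb\Cb^\top) = \rho\,\db^2$ componentwise. I would single out the first two rows of $\Cb$, namely $c_1^\top = (\,e_n^\top\ \ 0_n^\top\ \ 1\ \ 0\,)$ with $\db_1 = \lfloor n/2\rfloor$, and $c_2^\top = (\,e_n^\top\ \ 0_n^\top\ \ 0\ \ {-1}\,)$ with $\db_2 = 1$. Writing out $c_i^\top \Yb c_i = \rho\,\db_i^2$ for these two indices expresses $\langle E, \Yb^{11}\rangle$ in terms of $\rho$, the scalar blocks $\Yb^{33}$, $\Yb^{44}$, and the cross terms $e_n^\top \Yb^{13}$, $e_n^\top \Yb^{14}$.

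Next I would clean these two identities up. For $i=2$ we get $\langle E,\Yb^{11}\rangle - 2\,e_n^\top\Yb^{14} + \Yb^{44} = \rho$, and for $i=1$ we get $\langle E,\Yb^{11}\rangle + 2\,e_n^\top\Yb^{13} + \Yb^{33} = \rho\lfloor n/2\rfloor^2$. To make these usable I need to control the cross terms. Here I would use \Cref{prop:MYMt} again (or directly $M\Yxl = 0$): the last column of $M\Yxl = 0$ gives $\Cb\yb = \rho\db$, and more generally the rows of $M\Yxl = 0$ indexed by the slack coordinates, together with the already-proven facts $\yb^3 = \rho\lfloor n/2\rfloor - 1$ and $\yb^4 = 1-\rho$ from \Cref{prop:bounds-rho}, let me pin down $e_n^\top\Yb^{13}$, $\Yb^{33}$, $e_n^\top\Yb^{14}$, $\Yb^{44}$ in terms of $\rho$ and $\langle E,\Yb^{11}\rangle$. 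Concretely, applying $M\Yxl = 0$ in the rows corresponding to $s$ and $t$ paired against the first-block columns yields $e_n^\top\Yb^{13} = \rho\lfloor n/2\rfloor - \Yb^{33} - \langle E,\Yb^{11}\rangle$ and similarly for the $t$-block, so the scalar identities above collapse to a linear relation purely between $\langle E,\Yb^{11}\rangle$ and $\rho$.

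Once that relation is in hand — I expect it to read essentially $\langle E,\Yb^{11}\rangle$ equals a convex-combination-like expression such as $\rho\lfloor n/2\rfloor$ shifted by $\Yb^{33}$, or after substitution simply $1 + \text{(nonnegative)}$ on one side and $\lfloor n/2\rfloor - \text{(nonnegative)}$ on the other — the bounds follow from non-negativity of the DNN matrix entries $\Yb^{33}, \Yb^{44}, e_n^\top\Yb^{13}, e_n^\top\Yb^{14}$ (all of which are sums of entries of $\Yxl \ge 0$) together with the bounds $\tfrac{1}{\lfloor n/2\rfloor} \le \rho \le 1$ from \Cref{prop:bounds-rho}. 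The lower bound $\langle E,\Yb^{11}\rangle \ge 1$ should come from the $i=2$ identity and $\rho \ge$ its lower bound (or more cleanly: $\langle E,\Yb^{11}\rangle = e_n^\top\yb^1 + (\text{nonneg})$ and $e_n^\top\yb^1 = 1$); the upper bound $\langle E,\Yb^{11}\rangle \le \lfloor n/2\rfloor$ from the $i=1$ identity, non-negativity, and $\rho \le 1$.

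The main obstacle is bookkeeping rather than any genuine difficulty: I must be careful about exactly which rows of the $(n+2)\times(2n+3)$ matrix $M$ I am intersecting with which columns of $\Yxl$, and about the off-by-factor-of-two issues in the cross terms $e_n^\top\Yb^{1k}$ (since $c_i^\top\Yb c_i$ produces each cross term twice). I would organize the computation by first recording the three identities coming from $\diag(\Cb\Yb\Cb^\top) = \rho\db^2$ restricted to the two scalar-slack rows, then substituting the values of $\yb^3,\yb^4$ and the cross-term expressions obtained from $M\Yxl = 0$, and only at the end invoking non-negativity and the $\rho$-bounds. If the algebra is managed carefully the two inequalities drop out without needing the original constraints $1 \le \langle E,\Yb^{11}\rangle \le \lfloor n/2\rfloor$ at all, which is precisely the point of the proposition.
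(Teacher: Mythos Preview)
Your plan is correct and, for the upper bound, essentially identical to the paper's: both combine $(M\Yxl M^\top)_{1,1} = 0$ with $(M\Yxl)_{1,2n+1} = 0$ to reach $\langle E, \Yb^{11}\rangle + e_n^\top \Yb^{13} = \lfloor n/2\rfloor$, and non-negativity of $\Yb^{13}$ finishes it (no $\rho$-bound is actually needed here, contrary to what you expected). For the lower bound the paper takes a slightly shorter path: instead of your $i=2$ identity plus the relation $e_n^\top \Yb^{14} = \Yb^{44} + \yb^4$, it applies positive semidefiniteness directly with the test vector $v^\top = (e_n^\top,\,0_{n+2}^\top,\,-1)$, obtaining $\langle E, \Yb^{11}\rangle - 2 + \rho \ge 0$ and then invoking $\rho \le 1$. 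Your route, once the algebra is carried out, yields the identity $\langle E, \Yb^{11}\rangle = \Yb^{44} + 2 - \rho$, so you also need $\rho \le 1$ (not $\rho \ge 1/\lfloor n/2\rfloor$ as you guessed); the two arguments arrive at equivalent inequalities, and yours has the minor advantage of using the same $M\Yxl = 0$ machinery for both directions. A couple of your stated intermediate formulas are off (e.g.\ the expression you wrote for $e_n^\top \Yb^{13}$), but you already flagged bookkeeping as the only real obstacle, and that is indeed the case.
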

\begin{proof}
	From the positive semidefiniteness of the matrix variable, it follows that
	\begin{align*}
	0 &\leq (\begin{matrix} e_n^\top & 0_{n+2}^\top & -1 \end{matrix})
	\bigg(\begin{matrix}
	\Yb &  \yb \\
	\yb^\top & \rho
	\end{matrix}\bigg)
	\Bigg(\begin{matrix}
	e_n \\ 0_{n+2} \\ -1
	\end{matrix}\Bigg)\\
	&= e_n^\top \Yb^{11} e_n 
	- 2 (\begin{matrix} e_n^\top & 0_{n+2} \end{matrix})^\top \yb + \rho\\
	&= \langle E, \Yb^{11} \rangle - 2 + \rho.
	\end{align*}
	Since $\rho \leq 1$, we obtain the lower bound $1 \leq \langle E, \Yb^{11}\rangle$.
	
	Considering \Cref{prop:bounds-rho}
	and
	\begin{equation*}
	0 = (M\Yxl)_{1,\roundbr{2n+1}} = 
	e_n^\top \Yb^{13} + \Yb^{33} -\Big\lfloor \frac n 2 \Big\rfloor \yb^3 = 
	e_n^\top \Yb^{13} + \Yb^{33} -  \Big\lfloor \frac n 2 \Big\rfloor^2\rho + 
	\Big\lfloor \frac n 2 \Big\rfloor,
	\end{equation*}
	yields
	\begin{equation*}
	e_n^\top \Yb^{13} + \Yb^{33} =  \Big\lfloor \frac n 2 \Big\rfloor^2\rho -
	\Big\lfloor \frac n 2 \Big\rfloor.
	\end{equation*}
	Plugging this into $(M\Yxl M^\top)_{1,1} = 0$, we can then derive
	\begin{align*}
	0 &= 
	e_n^\top \Yb^{11} e_n + \Yb^{33} + \Big\lfloor \frac{n}{2} \Big\rfloor^2 \rho
	+ 2 e_n^\top \Yb^{13} - 2 \Big\lfloor \frac{n}{2} \Big\rfloor e_n^\top \yb^1
	- 2  \Big\lfloor \frac{n}{2} \Big\rfloor \yb^3 \\
	&= 	e_n^\top \Yb^{11} e_n + \Yb^{33} + \Big\lfloor \frac{n}{2} \Big\rfloor^2 \rho
	+ 2 e_n^\top \Yb^{13} - 2 \Big\lfloor \frac{n}{2} \Big\rfloor 
	- 2  \Big\lfloor \frac{n}{2}
	\Big\rfloor\Big(\Big\lfloor \frac{n}{2} \Big\rfloor\rho - 1\Big) \\
	&= e_n^\top \Yb^{11} e_n + \Yb^{33} + 2 e_n^\top \Yb^{13} 
	- \Big\lfloor \frac{n}{2} \Big\rfloor^2 \rho\\
	&= e_n^\top \Yb^{11} e_n + e_n^\top \Yb^{13} -
	\Big\lfloor \frac n 2 \Big\rfloor.
	\end{align*}
	Due to the non-negativity, it holds that $e_n^\top \Yb^{13} \geq 0$  
	and therefore $\langle E, \Yb^{11} \rangle \leq \lfloor \frac n 2 \rfloor$.	
\end{proof}

In addition to the non-negativity, we can derive the following upper bounds on the entries of
all feasible matrices.
\begin{proposition}\label{prop:ub-entries-Y}
	Every feasible solution of~\eqref{eq:sdp-relax-2np3} and~\eqref{eq:sdp-relax-facialred} satisfies
	\begin{align*}
	\yb^3 &\leq \Big\lfloor\frac{n}{2}\Big\rfloor - 1,\\
	\yb^4 &\leq 1 - \frac{1}{\lfloor\frac{n}{2}\rfloor},\\
	\Yb^{33} &\leq \Big\lfloor \frac n 2 \Big\rfloor ^2 -
	\Big\lfloor \frac n 2 \Big\rfloor,\\
	\Yb^{44} &\leq \Big\lfloor \frac n 2 \Big\rfloor - 1,\\
	\Yb^{34} &\leq \Big\lfloor\frac{n}{2}\Big\rfloor - 1, \\
	\Yb^{13}, \Yb^{23} &\leq
	\Big\lfloor\frac{n}{2}\Big\rfloor -1 \text{, and }\\
	\Yb^{14}, \hat Y^{24} &\leq 1 - \frac{1}{\lfloor\frac{n}{2}\rfloor}.
	\end{align*}
\end{proposition}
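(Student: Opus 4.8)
The plan is to obtain every inequality in \Cref{prop:ub-entries-Y} by combining three facts that are already in hand: that every feasible $\Yxl$ (for both \eqref{eq:sdp-relax-2np3} and \eqref{eq:sdp-relax-facialred}, cf.\ the remark at the beginning of this subsection) satisfies $M\Yxl = 0$ and $\diag(\Cb\Yb\Cb^\top) = \rho\db^2$ by \Cref{prop:MYMt}; that $\frac{1}{\floor{n/2}} \le \rho \le 1$, $\yb^3 = \rho\floor{n/2} - 1$, and $\yb^4 = 1 - \rho$ by \Cref{prop:bounds-rho}; and entry-wise non-negativity of $\Yxl$. The first two claimed bounds then need nothing further: $\yb^3 = \rho\floor{n/2} - 1 \le \floor{n/2} - 1$ because $\rho \le 1$, and $\yb^4 = 1 - \rho \le 1 - \frac{1}{\floor{n/2}}$ because $\rho \ge \frac{1}{\floor{n/2}}$.

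For the bounds on the matrix blocks the idea is systematic: each entry of the matrix identity $M\Yxl = 0$ is a linear relation among the blocks of $\Yxl$, and reading it off in a position indexed by one of the slack coordinates $s$ or $t$ isolates the targeted block up to non-negative terms that may simply be dropped. Pairing the row of $M$ that encodes $\xs_i + \zs_i = 1$ with the columns of $\Yxl$ indexed by $s$ and by $t$ gives $\Yb^{13}_i + \Yb^{23}_i = \yb^3$ and $\Yb^{14}_i + \Yb^{24}_i = \yb^4$ for each $i \in [n]$; non-negativity of all terms then yields $\Yb^{13}_i, \Yb^{23}_i \le \yb^3 \le \floor{n/2} - 1$ and $\Yb^{14}_i, \Yb^{24}_i \le \yb^4 \le 1 - \frac{1}{\floor{n/2}}$. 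Pairing instead the row of $M$ that encodes $e_n^\top\xs + s = \floor{n/2}$ with the same two columns gives $e_n^\top\Yb^{13} + \Yb^{33} = \floor{n/2}\,\yb^3$ and $e_n^\top\Yb^{14} + \Yb^{34} = \floor{n/2}\,\yb^4$; dropping the non-negative sums $e_n^\top\Yb^{13}$ and $e_n^\top\Yb^{14}$ and substituting the values of $\yb^3$ and $\yb^4$ gives $\Yb^{33} \le \rho\,{\floor{n/2}}^{2} - \floor{n/2} \le {\floor{n/2}}^{2} - \floor{n/2}$ (using $\rho \le 1$) and $\Yb^{34} \le \floor{n/2}(1-\rho) \le \floor{n/2} - 1$ (using $\rho \ge \frac{1}{\floor{n/2}}$).

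The only bound requiring two steps — and hence the one place where a little care is needed — is the one on $\Yb^{44}$. Reading the row of $M$ that encodes $e_n^\top\xs - t = 1$ against the $t$-column of $\Yxl$ gives $\Yb^{44} = e_n^\top\Yb^{14} - \yb^4$, which only bounds $\Yb^{44}$ once $e_n^\top\Yb^{14}$ is bounded from above; but the identity $e_n^\top\Yb^{14} + \Yb^{34} = \floor{n/2}\,\yb^4$ derived above, together with $\Yb^{34} \ge 0$, already gives $e_n^\top\Yb^{14} \le \floor{n/2}\,\yb^4$. Chaining the two yields $\Yb^{44} \le (\floor{n/2} - 1)\,\yb^4 = (\floor{n/2} - 1)(1 - \rho) \le \floor{n/2} - 1$, since $0 \le 1 - \rho \le 1$. (Equivalently, one may eliminate $\Yb^{44}$ between $\Yb^{44} = e_n^\top\Yb^{14} - \yb^4$ and the diagonal identity $c_2^\top\Yb c_2 = \rho$ from \Cref{prop:MYMt}, obtaining $e_n^\top\Yb^{14} = \langle E, \Yb^{11}\rangle - 1$ and then the same bound via \Cref{prop:sumY11-inequality-implied}.) I expect the genuinely error-prone part of the write-up to be only the bookkeeping: keeping the $4\times 4$ block structure of $\Yb$, the extra last row and column of $\Yxl$, and the sign pattern of the rows of $M$ mutually consistent while reading off the right coordinate of $M\Yxl = 0$. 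Once a fixed indexing convention is pinned down, each of the seven inequalities is one or two lines.
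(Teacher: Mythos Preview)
Your proposal is correct and follows essentially the same route as the paper: read off suitable entries of the identity $M\Yxl = 0$, drop non-negative terms, and insert the bounds on $\rho$, $\yb^3$, $\yb^4$ from \Cref{prop:bounds-rho}. The only notable deviation is your primary argument for $\Yb^{44}$: the paper combines $(M\Yxl)_{2,(2n+2)}=0$ with $(M\Yxl M^\top)_{2,2}=0$ to obtain $\Yb^{44} = \langle E,\Yb^{11}\rangle + \rho - 2$ and then invokes \Cref{prop:sumY11-inequality-implied}, whereas you bound $e_n^\top\Yb^{14}$ directly from the already-derived identity $e_n^\top\Yb^{14} + \Yb^{34} = \floor{n/2}\,\yb^4$ and $\Yb^{34}\ge 0$, giving $\Yb^{44} \le (\floor{n/2}-1)(1-\rho)\le \floor{n/2}-1$. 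Your version is slightly more self-contained (it uses only $M\Yxl=0$ and non-negativity, avoiding both $M\Yxl M^\top$ and \Cref{prop:sumY11-inequality-implied}); the paper's version, which you also sketch as an alternative, has the small bonus of producing the exact identity $\Yb^{44} = \langle E,\Yb^{11}\rangle + \rho - 2$.
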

\begin{proof}
	The first two inequalities follow directly from~\Cref{prop:bounds-rho}, namely
	\begin{align*}
	\yb^3 &= \rho \Big\lfloor\frac{n}{2}\Big\rfloor - 1 \leq \Big\lfloor\frac{n}{2}\Big\rfloor - 1
	\text{ and }\\
	\yb^4 &= 1 - \rho \leq 1 - \frac{1}{\lfloor\frac{n}{2}\rfloor}.
	\end{align*}
	From $0 = (M\Yxl)_{1,(2n + 1)} = e_n^\top \Yb^{13} + \Yb^{33} -
	\lfloor\frac{n}{2}\rfloor \yb^3$, we obtain
	\[\Yb^{33} = \Big\lfloor\frac{n}{2}\Big\rfloor \yb^3 - e_n^\top \Yb^{13} \leq
	\Big\lfloor \frac n 2 \Big\rfloor ^2 - 
	\Big\lfloor \frac n 2 \Big\rfloor,
	\]
	using the upper bound on $\yb^3$ and $\Yb \geq 0$.
	Similarly, we can derive the upper bound for~$\Yb^{34}$.
	It holds that
	$
	0 = (M\Yxl)_{1,(2n+2)} = e_n^\top \Yb^{14} + \Yb^{34} -
	\lfloor\frac{n}{2}\rfloor \yb^4,
	$
	hence \[\Yb^{34}  = \Big\lfloor\frac{n}{2}\Big\rfloor \yb^4 -
	e_n^\top \Yb^{14} \leq   \Big\lfloor\frac{n}{2}\Big\rfloor  - 1.\]

	For the upper bound on~$\Yb^{44}$, consider the equations
	\begin{align*}
	0 &= (M\Yxl)_{2,(2n+2)} = e_n^\top \Yb^{14} - \Yb^{44} - \yb^4 \Leftrightarrow e_n^\top \Yb^{14} = \Yb^{44} + \yb^4\\
	0 &= (M\Yxl M^\top)_{2,2} = \langle E, \Yb^{11} \rangle +
	\Yb^{44} + \rho - 2e_n^\top \Yb^{14} - 2 + 2\yb^4.
	\end{align*}
	Plugging in the first into the second equation, we get
	\begin{equation*}
	\Yb^{44} = \langle E, \Yb^{11} \rangle + \rho - 2 \leq
	\Big \lfloor\frac{n}{2}\Big\rfloor -1,
	\end{equation*}
	since $\langle E, \Yb^{11} \rangle \leq \lfloor\frac n 2 \rfloor$~(cf.~\Cref{prop:sumY11-inequality-implied}) 
	and $\rho \leq 1$.
	Next, observe that
	\begin{align*}
	0_n &= (M\Yxl)_{3:(n+2),(2n+1)} = \Yb^{13} + \Yb^{23} - \yb^3 e_n \text{ and}\\
	0_n &= (M\Yxl)_{3:(n+2),(2n+2)} = \Yb^{14} + \Yb^{24} - \yb^4 e_n
	\end{align*}
	imply that $(\Yb^{13} + \Yb^{23})_i = \yb^3$ and $(\Yb^{14} + \Yb^{24})_i = \yb^4$
	has to hold for all~$i \in [n]$.
	Due to the non-negativity of~$\Yb$, each of the summands on the left-hand side is bounded by the
	right-hand side of the equations.
\end{proof}

We have already shown that~$\Yb^{11}$ and $\rho$ satisfy the inequality constraints of
the basic relaxation~\eqref{eq:sdp-relax-np1}.
In the subsequent section, we now want to strengthen our new relaxation~\eqref{eq:sdp-relax-2np3}
with further valid (in)equalities and compare it to the basic relaxation.
\subsection{Strengthening and comparison to the basic DNN relaxation}\label{sec:simple}
Recall, that the constraint $\Yxl \in \text{DNN}$ is the relaxation of
$\Yxl \in \mathcal{CP}^{2n+3}$, which is equivalent to 
$(\rho,\yb, \Yb) \in \rho \conv(\{ (1,\xb, \xb\xb^\top) : \xb \in \R^{2n + 2}_{\geq 0} \})$,
where we set $\xb^\top = (\begin{matrix}\xs^\top & \zs^\top & s & t\end{matrix})$.
Since the variables $\xs$ and $\zs$ are supposed to be binary,
we can add the following diagonal constraints.
\begin{equation}
	\label{eq:diagonalcon}
	\diag(\Yb^{11}) = \yb^1,\ \diag(\Yb^{22}) = \yb^2.
\end{equation}
These constraints can be added to~\eqref{eq:sdp-relax-facialred} without losing strict feasibility,
as presented in the following proposition.
\begin{proposition}
	Relaxation~\eqref{eq:sdp-relax-facialred} with the additional diagonal constraints~\eqref{eq:diagonalcon}
	has a Slater feasible point.
\end{proposition}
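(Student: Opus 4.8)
The plan is to observe that the additional equations~\eqref{eq:diagonalcon} are, in fact, already implied by the constraints of~\eqref{eq:sdp-relax-facialred}, so that the strengthened relaxation has exactly the same feasible set as~\eqref{eq:sdp-relax-facialred} and the statement follows at once from~\Cref{thm:slater-facialred-ours}. To see the implication, note that for every feasible point $\Yxl = WRW^\top$ of~\eqref{eq:sdp-relax-facialred} one has $M\Yxl = MWRW^\top = 0$, since the columns of $W$ span $\ker(M)$; hence~\Cref{prop:Y11plusY21} applies and yields $\Yb^{11}_{ij} + (\Yb^{12})^\top_{ij} = \yb^1_j$ and $\Yb^{22}_{ij} + \Yb^{12}_{ij} = \yb^2_j$ for all $i,j \in [n]$. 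Taking $i = j$ and using $(\Yb^{12})^\top_{jj} = \Yb^{12}_{jj}$ together with the constraint $\diag(\Yb^{12}) = 0$ that is already part of~\eqref{eq:sdp-relax-facialred}, both relations collapse to $\Yb^{11}_{jj} = \yb^1_j$ and $\Yb^{22}_{jj} = \yb^2_j$, which is precisely~\eqref{eq:diagonalcon}.

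It then remains only to record that appending redundant constraints cannot spoil strict feasibility. Since~\eqref{eq:diagonalcon} holds on the whole feasible set of~\eqref{eq:sdp-relax-facialred}, it introduces no new entries that are forced to vanish and leaves the relative interior untouched. Consequently, the matrix $\frac{1}{\kappa}\hat R$ produced in the proof of~\Cref{thm:slater-facialred-ours}, which is strictly positive definite, has all entries indexed by $\mathcal{J}^{\mathrm{c}}$ strictly positive, and meets every equality constraint, remains a Slater feasible point for~\eqref{eq:sdp-relax-facialred} augmented by~\eqref{eq:diagonalcon}.

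Should one prefer an argument that does not invoke~\Cref{prop:Y11plusY21}, the fallback is to repeat the proof of~\Cref{thm:slater-facialred-ours} with the linear map $\mathcal{A}$ from~\Cref{thm:slater-facialred-hu} enlarged by the homogeneous equations $X_{ii} - X_{i,(2n+3)} = 0$ and $X_{(n+i)(n+i)} - X_{(n+i),(2n+3)} = 0$ for $i \in [n]$, which encode~\eqref{eq:diagonalcon} on the lifted matrix. Because every $\xb \in \mathcal{Q}$ has $\xs,\zs \in \{0,1\}^n$, one has $(\xb\xb^\top)_{ii} = \xs_i$ and $(\xb\xb^\top)_{(n+i)(n+i)} = \zs_i$, so the enlargement does not shrink $\mathcal{Q}$; hence $\mathcal{L}$, $\Cb$, $\db$, $M$, $W$ and $\mathcal{J}$ are all unchanged, \Cref{thm:slater-facialred-hu} again supplies a Slater point that now additionally obeys~\eqref{eq:diagonalcon}, and the same rescaling by $\kappa = \sum_{i=1}^{n}(W\hat R W^\top)_{i,(2n+3)}$ finishes the proof. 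There is no real obstacle in either route; the only thing worth emphasizing is the conceptual point that~\eqref{eq:diagonalcon} does not genuinely strengthen~\eqref{eq:sdp-relax-facialred} and therefore cannot destroy the Slater property.
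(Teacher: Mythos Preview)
Your proposal is correct. Your fallback argument---enlarging $\mathcal{A}$ by the diagonal equations, noting that $\mathcal{Q}$ is unchanged because $\xs,\zs\in\{0,1\}^n$, and reapplying \Cref{thm:slater-facialred-hu}---is exactly the paper's proof.

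Your primary argument, however, takes a different and sharper route: by invoking \Cref{prop:Y11plusY21} at $i=j$ together with $\diag(\Yb^{12})=0$, you show that \eqref{eq:diagonalcon} is already implied by the constraints of~\eqref{eq:sdp-relax-facialred}, so the feasible set is unchanged and the Slater point from \Cref{thm:slater-facialred-ours} carries over verbatim. The paper does not record this redundancy; it treats the diagonal constraints as a genuine (if mild) strengthening and proves the Slater property by re-running the Hu--Sotirov--Wolkowicz machinery. Your observation is cleaner and has the side benefit of explaining why the ``small improvement'' the paper reports for the diagonal constraint should in principle be zero---any observed effect is numerical. It also makes the hypothesis in \Cref{thm:sdp2domsdp1} superfluous: \eqref{eq:sdp-relax-2np3} alone already dominates~\eqref{eq:sdp-relax-np1}.
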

\begin{proof}
Note, that in the proof of~\Cref{thm:slater-facialred-ours} we can add to the description
of~$\mathcal{Q}$ the constraint $\xs_i \xs_i = \xs_i$ and
$\zs_i \zs_i = \zs_i$ for all $1 \leq i \leq n$ without any impact on $\aff\roundbr{\conv(\mathcal{Q})}$.
Hence,~\eqref{eq:sdp-relax-facialred} with the diagonal constraint~\eqref{eq:diagonalcon}
has a Slater feasible point as well.
\end{proof}
To be able to compare~\eqref{eq:sdp-relax-2np3} to the basic relaxation~\eqref{eq:sdp-relax-np1},
we first present the following result on the objective function.
\begin{proposition}%
	\label{prop:objective-bigsdp-same}
	Every feasible point~$\Yxl$ of~\eqref{eq:sdp-relax-2np3} and~\eqref{eq:sdp-relax-facialred}
	satisfies
	\begin{equation*}
	\Yb^{22} = \Yb^{11} + \rho E - e_n \roundbr{\yb^1}^\top - \yb^1 e_n^\top
	\end{equation*}
	and therefore
	\begin{equation*}
	\big\langle \Yb, \operatorname{Diag}(\laplacian, 0_{n+2}) \big\rangle = 
	\frac{1}{2}\big\langle \Yb, \operatorname{Diag}(I_2 \otimes L, 0_{2}) \big\rangle.
	\end{equation*}
\end{proposition}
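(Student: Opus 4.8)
The plan is to derive the matrix identity $\Yb^{22} = \Yb^{11} + \rho E - e_n(\yb^1)^\top - \yb^1 e_n^\top$ by squaring the block-row relations of $M\Yxl = 0$, and then observe that this identity forces the two quadratic forms in the Laplacian to coincide. I would first invoke \Cref{prop:MYMt}, which guarantees $M\Yxl = 0$ for every feasible point of both relaxations (and \Cref{prop:Y11plusY21}, which already records $\Yb^{11}_{ij} + \Yb^{21}_{ij} = \yb^1_j$ and $\Yb^{22}_{ij} + \Yb^{12}_{ij} = \yb^2_j$). The key new input is that the block of $C$ corresponding to the slack equation $\xs + \zs = e_n$ reads $(I_n \ \ I_n \ \ 0_n \ \ 0_n)$, so the rows $3,\dots,n+2$ of $M = (\Cb \ \ -\db)$ act on $\Yxl$ as "add the first and second block of columns and subtract $\rho e_n$". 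Applying this twice — i.e. looking at the $(n\times n)$ block of $M\Yxl M^\top$ indexed by these rows, which is zero — gives
\begin{equation*}
0 = \Yb^{11} + \Yb^{12} + \Yb^{21} + \Yb^{22} - e_n(\yb^1 + \yb^2)^\top - (\yb^1 + \yb^2)e_n^\top + \rho E.
\end{equation*}
Since $\yb^1 + \yb^2 = \rho e_n$ by \Cref{prop:bounds-rho} (or directly from the same block of $M\Yxl = 0$), this simplifies to $\Yb^{11} + \Yb^{12} + \Yb^{21} + \Yb^{22} = 2\rho E - \rho E = \rho E$; combining with $\Yb^{12} + \Yb^{21} = e_n(\yb^1)^\top + \yb^1 e_n^\top - \Yb^{11} \cdot 0 \dots$ — more cleanly, using $\Yb^{12} + \Yb^{11} = $ (transpose of) the \Cref{prop:Y11plusY21} relation and $\Yb^{12} + \Yb^{22}$ from the same proposition with $\yb^2 = \rho e_n - \yb^1$ — one solves for $\Yb^{22}$ and lands exactly on the claimed formula. (Alternatively, and perhaps most transparently, substitute $\Yb^{21} = e_n(\yb^1)^\top - \Yb^{11}$ and $\Yb^{12} = e_n(\yb^2)^\top - \Yb^{22} = \rho e_n e_n^\top - e_n(\yb^1)^\top - \Yb^{22}$ from \Cref{prop:Y11plusY21} into the squared relation and read off $\Yb^{22}$.)

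For the second assertion, write $\Lb = \tfrac12\operatorname{Diag}(I_2\otimes \laplacian, 0_2) = \tfrac12\operatorname{Diag}(\laplacian,\laplacian,0_2)$, so that
\begin{equation*}
\tfrac12\langle \Yb, \operatorname{Diag}(I_2\otimes \laplacian, 0_2)\rangle = \tfrac12\langle \laplacian, \Yb^{11}\rangle + \tfrac12\langle \laplacian, \Yb^{22}\rangle.
\end{equation*}
Substituting the just-proved identity for $\Yb^{22}$, this equals $\langle\laplacian,\Yb^{11}\rangle + \tfrac12\big(\rho\langle\laplacian, E\rangle - 2\langle\laplacian, e_n(\yb^1)^\top\rangle\big)$, using symmetry of $\laplacian$ to merge the two rank-one terms. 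Now $\langle \laplacian, E\rangle = e_n^\top \laplacian e_n = 0$ and $\langle \laplacian, e_n(\yb^1)^\top\rangle = e_n^\top \laplacian\, \yb^1 = 0$ because $\laplacian e_n = 0$ for any graph Laplacian; hence both correction terms vanish and we are left with $\langle\laplacian,\Yb^{11}\rangle = \langle\Yb,\operatorname{Diag}(\laplacian,0_{n+2})\rangle$, which is the desired equality.

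The only delicate point is bookkeeping: extracting exactly the right $n\times n$ sub-block of $M\Yxl M^\top$ and keeping track of which rows of $M$ carry the $\xs$-equations versus the slack-equations, and making sure the signs in $-\db$ and in the $\zs$-block are handled correctly. This is entirely routine once one fixes the block convention already set up in the excerpt; I expect no conceptual obstacle, only the need to be careful that $\yb^1+\yb^2 = \rho e_n$ is genuinely available (it is — it is the third displayed line of the proof of \Cref{prop:bounds-rho}) so that the simplification $2\rho E - \rho E = \rho E$ goes through. Once the matrix identity is in hand, the objective-equality is immediate from $\laplacian e_n = 0$.
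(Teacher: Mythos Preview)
Your approach is correct and in substance the same as the paper's: both derive the matrix identity from \Cref{prop:Y11plusY21} together with the symmetry of $\Yxl$ and $\yb^2 = \rho e_n - \yb^1$, and both conclude $\langle \laplacian, \Yb^{22}\rangle = \langle \laplacian, \Yb^{11}\rangle$ from $\laplacian e_n = 0$. The paper works entrywise where you work in block-matrix form, and it skips your detour through the $n\times n$ block of $M\Yxl M^\top$: the relation $\Yb^{11}+\Yb^{12}+\Yb^{21}+\Yb^{22} = \rho E$ is already a consequence of the two identities in \Cref{prop:Y11plusY21} plus $\yb^1+\yb^2 = \rho e_n$, so it adds no information.

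One caution: your parenthetical ``alternatively, substitute $\Yb^{21} = e_n(\yb^1)^\top - \Yb^{11}$ and $\Yb^{12} = e_n(\yb^2)^\top - \Yb^{22}$ into the squared relation'' does not work --- both $\Yb^{11}$ and $\Yb^{22}$ cancel and you are left with the tautology $\rho E = \rho E$. The route that actually lands is your ``more cleanly'' line: transpose the first identity of \Cref{prop:Y11plusY21} (using $(\Yb^{11})^\top = \Yb^{11}$) to get $\Yb^{11} + \Yb^{12} = \yb^1 e_n^\top$, subtract the second identity $\Yb^{22} + \Yb^{12} = e_n(\yb^2)^\top = \rho E - e_n(\yb^1)^\top$, and read off $\Yb^{22}$ directly. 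That is exactly the paper's computation, written in matrix rather than scalar form.
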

\begin{proof}
	Using \Cref{prop:Y11plusY21} and then \Cref{prop:bounds-rho}, we get 
	\begin{align*}
		\Yb^{11}_{ij} + \Yb^{11}_{ji} - \Yb^{22}_{ij} - \Yb^{22}_{ji} &= \yb^1_j - \Yb^{12}_{ji} + \yb^1_i - \Yb^{12}_{ij} - \yb^2_j + \Yb^{12}_{ij} - \yb^2_i + \Yb^{12}_{ji}\\
		&= \yb^1_j + \yb^1_i - (\rho - \yb^1_j) - (\rho - \yb^1_i).
	\end{align*}
	From the symmetry of 
	$\Yxl$ it follows that $\Yb^{11}, \Yb^{22} \in \symmmat{n}$ and hence,
	\begin{equation*}
		\Yb^{22}_{ij} = \Yb^{11}_{ij} + \rho - \yb^1_i
		- \yb^1_j.
		\end{equation*}
	It then holds that $\inprod{L,\Yb^{22}} = \inprod{L,\Yb^{11}}$ due to the properties of the
	Laplacian matrix~$L$, which closes our proof.
\end{proof}

\begin{theorem}
	\label{thm:sdp2domsdp1}
	Relaxation~\eqref{eq:sdp-relax-2np3} with the diagonal constraint
	\begin{equation}\label{eq:diagconY11only}
		\diag(\Yb^{11}) = \yb^1
	\end{equation}
	is at least as good as~\eqref{eq:sdp-relax-np1}.
\end{theorem}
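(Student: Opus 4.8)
The plan is to exploit that both \eqref{eq:sdp-relax-np1} and \eqref{eq:sdp-relax-2np3} are minimization problems producing lower bounds on $h(G)$, so that ``at least as good'' means the optimal value of \eqref{eq:sdp-relax-2np3} with the extra constraint \eqref{eq:diagconY11only} is no smaller than the optimal value of \eqref{eq:sdp-relax-np1}. I would establish this by showing that every feasible point of \eqref{eq:sdp-relax-2np3} that additionally satisfies \eqref{eq:diagconY11only} restricts to a feasible point of \eqref{eq:sdp-relax-np1} attaining the same objective value; minimizing over the (only larger) feasible region of \eqref{eq:sdp-relax-np1} then yields the claimed inequality, with no attainment assumption needed.

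Concretely, given such a feasible $\Yxl$, I would set $\Ys := \Yb^{11}$, $\ys := \yb^1$ and keep $\rho$, and then check the constraints of \eqref{eq:sdp-relax-np1} one by one, each being already at hand: $e^\top \ys = 1$ is the first constraint of \eqref{eq:sdp-relax-2np3} restricted to the block $\yb^1$; the bounds $1/\lfloor n/2\rfloor \le \rho \le 1$ are \Cref{prop:bounds-rho}; the bounds $1 \le \langle E, \Ys\rangle \le \lfloor n/2\rfloor$ are \Cref{prop:sumY11-inequality-implied}; and $\diag(\Ys)=\ys$ is precisely \eqref{eq:diagconY11only} (and is in fact already forced by \eqref{eq:sdp-relax-2np3}, combining \Cref{prop:Y11plusY21} with $\diag(\Yb^{12})=0$). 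For the conic constraint, I would observe that the matrix appearing in the last line of \eqref{eq:sdp-relax-np1}, i.e.\ the one with blocks $\Ys$, $\ys$, $\rho$, is exactly the principal submatrix of $\Yxl$ on the index set $\{1,\dots,n\}\cup\{2n+3\}$: a principal submatrix of a positive semidefinite matrix is positive semidefinite, and its entries form a subset of the entries of $\Yxl \ge 0$, hence it lies in $\mathrm{DNN}$. Finally, since $\Lb = \operatorname{Diag}(\laplacian, 0_{n+2})$, the objective of \eqref{eq:sdp-relax-2np3} equals $\langle \laplacian, \Yb^{11}\rangle = \langle \laplacian, \Ys\rangle$, which is precisely the objective of \eqref{eq:sdp-relax-np1} at $\Ys$; the alternative form of $\Lb$ gives the same value by \Cref{prop:objective-bigsdp-same}.

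I do not expect a genuine obstacle: the argument is in essence bookkeeping, verifying that the restriction of a feasible point of the richer relaxation to the triple $(\Yb^{11},\yb^1,\rho)$ satisfies every defining (in)equality of the basic relaxation. The only steps that are not completely immediate are the inequality bounds on $\rho$ and on $\langle E, \Yb^{11}\rangle$, and these were deliberately isolated earlier as \Cref{prop:bounds-rho} and \Cref{prop:sumY11-inequality-implied}, so here they are merely cited. The single point worth double-checking is the index bookkeeping for the principal submatrix, namely that the block $\Yb^{11}$ of $\Yb$ together with the $\yb^1$-part of the last column and the entry $\rho$ indeed form a principal submatrix of $\Yxl$; this is immediate from the block structure fixed after \eqref{eq:exact-form-2np3}.
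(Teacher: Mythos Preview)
Your proposal is correct and follows essentially the same approach as the paper: extract the principal submatrix indexed by $\{1,\dots,n\}\cup\{2n+3\}$, verify the constraints of \eqref{eq:sdp-relax-np1} via \Cref{prop:bounds-rho} and \Cref{prop:sumY11-inequality-implied}, and match the objective values. Your parenthetical observation that \eqref{eq:diagconY11only} is in fact already forced by \Cref{prop:Y11plusY21} together with $\diag(\Yb^{12})=0$ is a nice bonus the paper does not make explicit at this point.
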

\begin{proof}
	To prove the theorem, we construct for every feasible point of the formulation~\eqref{eq:sdp-relax-2np3}~+~\eqref{eq:diagconY11only}
	a feasible point for~\eqref{eq:sdp-relax-np1} with the same objective function value.
	Let~$\Yxl \in \symmmat{2n + 3}$ be a matrix satisfying
	all constraints in~\eqref{eq:sdp-relax-2np3} and~\eqref{eq:diagconY11only}.
	From~\Cref{prop:bounds-rho,prop:sumY11-inequality-implied} and the
	constraint~\eqref{eq:diagconY11only}, we get that the submatrix
	\begin{equation*}
		\bigg(\begin{matrix}
		\Yb^{11} &  \yb^1 \\
		(\yb^1)^\top & \rho
		\end{matrix}\bigg)
	\end{equation*}
	satisfies all constraints of~\eqref{eq:sdp-relax-np1}.
	Due to~\Cref{prop:objective-bigsdp-same}, it holds that the
	objective function values are the same and hence~\eqref{eq:sdp-relax-2np3} is at least as good as~\eqref{eq:sdp-relax-np1}.
\end{proof}
\begin{remark}
\Cref{thm:sdp2domsdp1} also works without adding the constraint~$\diag(\Yb^{11}) = \yb^1$ if we remove the constraint
$\diag(\Ys) = \ys$ from~\eqref{eq:sdp-relax-np1}, i.e.,~\eqref{eq:sdp-relax-2np3}
is at least as good as~\eqref{eq:sdp-relax-np1} without the diagonal constraint.
\end{remark}
With the same argumentation as for the diagonal constraints,
we can further strengthen relaxation~\eqref{eq:sdp-relax-2np3}
by adding scaled facet defining inequalities for the boolean
quadric polytope (BQP) on
$\yb^1$, $\yb^2$ and
the left upper $2n \times 2n$ block of~$\Yxl$.
The scaled (multiplied with $\rho$) BQP inequalities are
\begin{subequations}
	\begin{align}
	\Yb_{ij} &\leq \yb_i \label{subeq:bqpa}\\
	\Yb_{ij} + \Yb_{ik} - \Yb_{jk} & \leq \yb_i \label{subeq:bqpb} \\
	\yb_i + \yb_j - \Yb_{ij} &\leq \rho \label{subeq:bqpc} \\
	\yb_i + \yb_j + \yb_k - \Yb_{ij} - \Yb_{ik} - \Yb_{jk} &\leq \rho \label{subeq:bqpd}
	\end{align}
\end{subequations}
for all $1 \leq i, j, k \leq 2n$.
Indeed, similarly as in~\cite{Sotirov2017} for the vector-lifted DNN relaxation of the graph bisection problem,
 we can show that constraints~\eqref{subeq:bqpa} and~\eqref{subeq:bqpc} are already implied
 by the constraints in~\eqref{eq:sdp-relax-2np3} and~\eqref{eq:sdp-relax-facialred}.
\begin{proposition}\label{prop:impliedbqpineq}
	Every feasible solution~$\Yxl$ of~\eqref{eq:sdp-relax-2np3} and~\eqref{eq:sdp-relax-facialred} satisfies
	the scaled BQP inequalities~\eqref{subeq:bqpa} and~\eqref{subeq:bqpc}.
\end{proposition}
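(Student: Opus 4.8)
The plan is to reduce both scaled BQP inequalities to two elementary ``pairing'' identities that encode the lifted constraint $\xs+\zs=e_n$, together with the entrywise non-negativity of~$\Yxl$. First I would invoke~\Cref{prop:MYMt} to get $M\Yxl=0$ for every feasible point of~\eqref{eq:sdp-relax-2np3}, and read off the $n$ rows of~$M$ that come from the block $(\begin{matrix} I_n & I_n & 0_n & 0_n\end{matrix})$ of~$\Cb$: these yield $\Yb_{kc}+\Yb_{n+k,c}=\yb_c$ for all $k\in[n]$ and all $c\in[2n+2]$, while the entry in the last column yields $\yb_k+\yb_{n+k}=\rho$. (These facts are essentially what is recorded in~\Cref{prop:Y11plusY21,prop:bounds-rho}.) Since $\Yb=\Yb^\top$, the first identity may equally be read with a fixed row and the column pair $\{k,n+k\}$. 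Writing $\bar\ell$ for the partner index of $\ell\in[2n]$, that is $\bar\ell=\ell+n$ if $\ell\le n$ and $\bar\ell=\ell-n$ otherwise (so that $\bar\ell\in[2n]$ again), I record for later use: $\Yb_{i\ell}+\Yb_{i\bar\ell}=\yb_i$ for all $i\in[2n]$, and $\yb_\ell+\yb_{\bar\ell}=\rho$.

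With these identities,~\eqref{subeq:bqpa} is immediate: for $i,j\in[2n]$, applying the pairing identity to the column pair $\{j,\bar j\}$ gives $\Yb_{ij}=\yb_i-\Yb_{i\bar j}\le\yb_i$, using $\Yb_{i\bar j}\ge0$. Then~\eqref{subeq:bqpc} follows from~\eqref{subeq:bqpa}: for $i,j\in[2n]$, substituting $\yb_i=\rho-\yb_{\bar i}$ and $\yb_j=\Yb_{ij}+\Yb_{\bar i j}$ into the left-hand side gives
\begin{equation*}
	\yb_i+\yb_j-\Yb_{ij}=\rho-\yb_{\bar i}+\Yb_{\bar i j}=\rho-\bigl(\yb_{\bar i}-\Yb_{\bar i j}\bigr),
\end{equation*}
and the parenthesised term is non-negative by the instance of~\eqref{subeq:bqpa} applied to $(\bar i,j)$, so $\yb_i+\yb_j-\Yb_{ij}\le\rho$. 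Because feasible points $WRW^\top$ of~\eqref{eq:sdp-relax-facialred} also satisfy $MWRW^\top=0$, the same argument applies verbatim to them.

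I do not expect a genuine obstacle here. The one point requiring care is the index bookkeeping: extracting the appropriate rows and columns of~$M$ so that the pairing identities hold in exactly the form used, and checking that the partner of an index in $[2n]$ again lies in $[2n]$, so that the cited instance of~\eqref{subeq:bqpa} is legitimate.
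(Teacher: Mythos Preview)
Your proof is correct. Both you and the paper derive~\eqref{subeq:bqpa} in the same way, from the pairing identity $\Yb_{k,c}+\Yb_{n+k,c}=\yb_c$ (which is~\Cref{prop:Y11plusY21}) together with non-negativity. For~\eqref{subeq:bqpc}, however, the paper proceeds by a case distinction on whether each of $i,j$ lies in $[n]$ or in $\{n+1,\dots,2n\}$, invoking the identity $\Yb^{22}=\Yb^{11}+\rho E-e_n(\yb^1)^\top-\yb^1 e_n^\top$ from~\Cref{prop:objective-bigsdp-same} for the base case and then reducing the mixed and complementary cases back to~\eqref{subeq:bqpa} or to non-negativity of~$\Yb^{11}$. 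Your partner-index notation $\bar\ell$ collapses all four cases into one line: the substitution $\yb_i=\rho-\yb_{\bar i}$ and $\yb_j=\Yb_{ij}+\Yb_{\bar i j}$ reduces~\eqref{subeq:bqpc} directly to the instance of~\eqref{subeq:bqpa} at $(\bar i,j)$, with no need for~\Cref{prop:objective-bigsdp-same}. This is a genuinely cleaner and more uniform argument; what the paper's route buys in exchange is that it makes explicit which block of~$\Yxl$ ultimately certifies each case, which is perhaps more informative structurally but not needed for the proposition itself.
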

\begin{proof}
	From~\Cref{prop:Y11plusY21} and $\Yxl \geq 0$ it follows that $\Yb_{ij} \leq \yb_j$ 
	for all $1 \leq i,j \leq 2n$, which is~\eqref{subeq:bqpa}.

	By~\Cref{prop:objective-bigsdp-same} and $\Yb^{22} \geq 0$ we get that
	$\Yb^{11}_{ij} + \rho - \yb^1_i - \yb^1_j \geq 0$ holds, which is equivalent to~\eqref{subeq:bqpc}
	for~$1 \leq i,j \leq n$.
	For~$1 \leq i \leq n$ and $(n+1)\leq j \leq 2n$, let $k = j - n$, then~\eqref{subeq:bqpc}
	can be written as
	\begin{align*}
		\rho &\geq \yb^1_i + \yb^2_k - \Yb^{12}_k = \yb^1_i + \Yb^{22}_{ik}
		= \Yb^{11}_{ik} - \yb^1_k + \rho,
	\end{align*}
	where the first equality follows from~\Cref{prop:Y11plusY21} and the second follows from~\Cref{prop:objective-bigsdp-same}.
	Hence, for this choice of~$i$ and~$j$,~\eqref{subeq:bqpc} is equivalent to~\eqref{subeq:bqpa}. In the same way, one can prove the case~$(n + 1) \leq i \leq 2n$ and $1 \leq j \leq n$.
	For the case~$(n + 1) \leq i,j \leq 2n$ let $k = n - i$ and $\ell = n - j$.
	Applying~\Cref{prop:bounds-rho} and~\Cref{prop:objective-bigsdp-same} we obtain
	that~\eqref{subeq:bqpc} can be written as
	\begin{equation*}
		\rho \geq \yb^2_k + \yb^2_\ell - \Yb^{22}_{k\ell} = 2 \rho - \yb^1_k - \yb^1_\ell - \Yb^{22}_{k\ell}
		= \rho - \Yb^{11}_{k\ell},
	\end{equation*}
	which is equivalent to~$\Yb^{11} \geq 0$ and thus valid for every feasible solution of~\eqref{eq:sdp-relax-2np3}
	and~\eqref{eq:sdp-relax-facialred}.
\end{proof}

\begin{corollary}
	\label{thm:sdp2domsdp1plusbqpac}
	Relaxation~\eqref{eq:sdp-relax-2np3} with the diagonal constraint
	\begin{equation*}
		\diag(\Yb^{11}) = \yb^1
	\end{equation*}
	is at least as good as~\eqref{eq:sdp-relax-np1} with the additional constraints
	\begin{align*}
		\Ys_{ij} &\leq \ys_i \\
		\ys_i + \ys_j - \Ys_{ij} &\leq \rho
	\end{align*}
	for $1 \leq i,j \leq n$.
\end{corollary}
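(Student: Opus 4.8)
The plan is to build directly on the reduction already constructed in the proof of~\Cref{thm:sdp2domsdp1}. Given any $\Yxl$ feasible for~\eqref{eq:sdp-relax-2np3} together with the diagonal constraint $\diag(\Yb^{11}) = \yb^1$, that proof shows that the principal submatrix
\[
\bigg(\begin{matrix}
\Yb^{11} &  \yb^1 \\
(\yb^1)^\top & \rho
\end{matrix}\bigg)
\]
is feasible for~\eqref{eq:sdp-relax-np1}, and by~\Cref{prop:objective-bigsdp-same} it has the same objective value $\langle \laplacian, \Yb^{11}\rangle = \langle \Lb, \Yxl\rangle$. Hence the only thing left to verify is that this submatrix additionally satisfies the two families of cuts appearing in the statement, namely $\Ys_{ij}\le\ys_i$ and $\ys_i+\ys_j-\Ys_{ij}\le\rho$ for $1\le i,j\le n$, where we read off $\Ys=\Yb^{11}$, $\ys=\yb^1$.

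For this I would simply invoke~\Cref{prop:impliedbqpineq}, which guarantees that every feasible $\Yxl$ of~\eqref{eq:sdp-relax-2np3} satisfies the scaled BQP inequalities~\eqref{subeq:bqpa} and~\eqref{subeq:bqpc} for all $1\le i,j\le 2n$. Restricting the index range to $1\le i,j\le n$, inequality~\eqref{subeq:bqpa} becomes $\Yb^{11}_{ij}\le\yb^1_i$ (using symmetry of $\Yb^{11}$ to get the version with subscript $i$ rather than $j$), and inequality~\eqref{subeq:bqpc} becomes $\yb^1_i+\yb^1_j-\Yb^{11}_{ij}\le\rho$; these are precisely the two added constraints on $(\Ys,\ys,\rho)=(\Yb^{11},\yb^1,\rho)$. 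Therefore the constructed submatrix is feasible for~\eqref{eq:sdp-relax-np1} augmented by those cuts, with unchanged objective value, so a feasible point of~\eqref{eq:sdp-relax-2np3} plus~\eqref{eq:diagconY11only} maps to a feasible point of the strengthened basic relaxation of equal value, which yields the claimed domination.

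Since both ingredients — the feasibility/value reduction from~\Cref{thm:sdp2domsdp1} and the implied-inequality fact from~\Cref{prop:impliedbqpineq} — are already in hand, there is essentially no obstacle here; the corollary is a short composition of the two. The only point requiring (minor) care is the bookkeeping that the restriction of the $2n$-indexed scaled BQP inequalities to the first $n$ coordinates corresponds exactly to the inequalities stated for the $(n+1)$-dimensional relaxation, which is immediate from the fixed block structure of $\Yxl$ introduced in~\Cref{sec:convexification}, together with the bounds $\rho\le 1$ and $1\le\langle E,\Yb^{11}\rangle\le\lfloor n/2\rfloor$ already used in the proof of~\Cref{thm:sdp2domsdp1}.
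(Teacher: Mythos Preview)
Your proposal is correct and matches the paper's own proof, which simply states that the corollary follows from the proof of \Cref{thm:sdp2domsdp1} together with \Cref{prop:impliedbqpineq}. Your additional bookkeeping about restricting the index range and invoking symmetry is exactly the (trivial) detail the paper leaves implicit.
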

\begin{proof}
	The statement follows from the proof of \Cref{thm:sdp2domsdp1} and \Cref{prop:impliedbqpineq}.
\end{proof}

Our numerical tests suggest that~\eqref{subeq:bqpb} leads to a better
improvement of the DNN relaxation than~\eqref{subeq:bqpd}.
The diagonal constraint yields a small improvement only, and therefore we
will not consider it for our relaxation.

Solving the relaxation with non-negativity and scaled triangle inequalities with
interior point solvers in a reasonable time is not possible due to the number of constraints.
In the next section, we therefore introduce an augmented Lagrangian algorithm to compute
the strengthened DNN relaxation.

\section{Solving the DNN relaxation}\label{sec:auglag}
\subsection{An augmented Lagrangian algorithm}\label{sec:auglag-subsec}
Problem~\eqref{eq:sdp-relax-facialred} strengthened by a subset~$\mathcal{T}$ of the scaled BQP inequalities~\eqref{subeq:bqpb}
can be written as 
\begin{equation}
\label{eq:DNN-CUTS-P-facialred}
\tag{DNN-PFRC}
\begin{aligned}
 \min      \quad & \inprod[\big]{W^\top \Lxl W , R} \\
\text{s.t.} \quad & \mathcal A\roundbr[\big]{WRW^\top} = b \\
& \mathcal{B}\roundbr[\big]{WRW^\top} \leq 0 \\
&WRW^\top \ge 0\\
& R \succeq 0,
\end{aligned}
\end{equation}
where $\mathcal A \colon \symmmat{2n + 3} \to \mathbb{R}^{p}$ is the operator corresponding to the
linear equality constraints from~\eqref{eq:sdp-relax-facialred} and
$\mathcal{B} \colon \symmmat{2n + 3} \to \mathbb{R}^{q}$ is the operator corresponding to the BQP
inequalities in~$\mathcal T$.
Note that~$p$ denotes the number of equality constraints and is equal to~$3n + 1$ if the diagonal constraint is
included and~$n+1$ otherwise. The number of BQP inequalities in~$\mathcal T$ is denoted by~$q$.
We define the Lagrangian function with respect to the primal
variable $R\succeq 0$ and the dual variables $\dualvarone \in \mathbb{R}^{p}$, $\dualvartwo \in \mathbb{R}^{q}$, 
$S \in \symmmat{2n + 3}$ and $Z \in \symmmat{n + 1}$ corresponding to the equality constraints, inequality constraints,
non-negativity and positive semidefiniteness constraint,  as
\begin{align*}
\mathcal{L}(R; \dualvarone, \dualvartwo, S, Z) &= \inprod[\big]{W^\top \Lxl W , R} +
 \dualvarone^\top\roundbr[\big]{b - \mathcal A\roundbr[\big]{WRW^\top}} + \dualvartwo^\top \mathcal{B}\roundbr[\big]{WRW^\top} \\
  & \qquad - \inprod[\big]{WRW^\top, S} - \inprod{R,  Z}\\
&= b^\top \dualvarone - \inprod[\big]{W^\top \roundbr[\big]{\mathcal A^\top \dualvarone - \mathcal B^\top \dualvartwo + S - \Lxl} W + Z, R}.
\end{align*}
The Lagrange dual function is then defined by
\begin{equation*}
	g(\dualvarone, \dualvartwo, S, Z) = \inf_{R \in \symmmat{n + 1}} \mathcal{L}(R; \dualvarone,  \dualvartwo, S,  Z) = \begin{cases}
		b^\top \dualvarone & \text{if } W^\top\roundbr[\big]{\mathcal{A}^\top \dualvarone - \mathcal B^\top \dualvartwo + S - \Lxl}W + Z = 0,\\
		-\infty & \text{else,}
	\end{cases}
\end{equation*}
and the dual problem of~\eqref{eq:DNN-CUTS-P-facialred} is given by
$\max_{\dualvarone,\dualvartwo \geq 0, S\geq 0, Z\succeq 0} g(\dualvarone, \dualvartwo, S, Z)$, that is
\begin{equation}
\tag{DNN-DFRC}
\label{eq:dual_sdp-relax-facialred}
\begin{aligned}
 \max     \quad & b^\top \dualvarone \\
\text{s.t.} \quad & W^\top\roundbr[\big]{\mathcal{A}^\top \dualvarone - \mathcal B^\top \dualvartwo + S - \Lxl}W + Z = 0\\
&\dualvarone\in \R^p,\ \dualvartwo \geq 0 ,\ S \ge 0,  \ Z \succeq 0. \\
\end{aligned}
\end{equation}

We propose to use the augmented Lagrangian approach to approximately solve the above dual problem. 
By introducing a Lagrange multiplier $R$ for the dual equality constraint,  and a penalty parameter $\alpha > 0$, 
the augmented Lagrangian function $\mathcal{L}_{\alpha}$ can be written as
{
\begin{align*}
\mathcal{L}_{\alpha}(\dualvarone,\dualvartwo,S,Z{;} R) &= b^\top \dualvarone 
	-\inprod[\big]{W^\top\roundbr[\big]{\mathcal A^\top \dualvarone - \mathcal B^\top \dualvartwo + S - \Lxl} W + Z, R} \\
&\qquad - \frac{1}{2\alpha}\norm[\Big]{W^\top\roundbr[\big]{\mathcal A^\top \dualvarone - \mathcal B^\top\dualvartwo + S -  \Lxl}W + Z}^2\\
&=b^\top \dualvarone  - \frac{1}{2\alpha}\norm[\Big]{W^\top\roundbr[\big]{\mathcal A^\top \dualvarone -\mathcal B^\top\dualvartwo+ S -  \Lxl} W + Z + \alpha R}^2 + 
    \frac{\alpha}{2}\norm{R}^2.
\end{align*}}%
Note that the Lagrangian dual of~\eqref{eq:dual_sdp-relax-facialred} is again~\eqref{eq:DNN-CUTS-P-facialred},
which justifies our choice of the dual variable name~$R$.
The augmented Lagrangian method for solving~\eqref{eq:dual_sdp-relax-facialred} consists in maximizing
$\mathcal{L}_{\alpha}(\dualvarone, \dualvartwo,S,Z; R_k)$ for a fixed $\alpha > 0$ and $R_k$ to get $\dualvarone_k \in \mathbb{R}^{p}$,
$\dualvartwo_k \ge 0$, $S_k \ge 0$ and $Z_k \succeq 0$. 
Then the primal matrix $R$ is updated using 
\begin{equation}
\label{eq:update_primal_matrix}
R_{k+1} = R_k - \frac{1}{\alpha}\roundbr[\big]{W^\top \roundbr[\big]{\mathcal A^\top \dualvarone_k - \mathcal B^\top\dualvartwo_k + S_k - \Lxl }W + Z_k},
\end{equation}
see~\cite{bertsekas2014constrained}.  By construction, the primal matrix $\Yxl$ is then given by
\begin{equation}
\label{eq:construct_primal_matrix}
\Yxl = WRW^\top.
\end{equation}
In the augmented Lagrangian method, 
as opposed to the penalty method,  the penalty parameter $\alpha$ does not necessarily need to go to zero in order
to guarantee convergence.
However, to avoid problem-specific tuning of the penalty parameter, we let $\alpha \to 0$.
Decreasing $\alpha$ makes the subproblem in each iteration harder to solve due to ill-conditioning.
First-order methods are particularly sensitive to ill-conditioning, hence, we use here a quasi-Newton method to solve the inner problem.   

For given $\alpha > 0$ and $R \succeq 0$, the inner maximization problem
\begin{align}
\label{eq:inner_problem}
\begin{split}
\max \quad&  \ b^\top \dualvarone  - \frac{1}{2\alpha}\norm[\Big]{W^\top\roundbr[\big]{\mathcal A^\top \dualvarone - \mathcal B^\top\dualvartwo + S -  \Lxl} W + Z + 
					\alpha R}^2 + \frac{\alpha}{2} \norm{R}^2 \\
\textrm{s.t.} \quad&  \dualvarone \in \mathbb{R}^{p}, \dualvartwo \geq 0, \ S \ge 0, \ Z \succeq 0
\end{split}
\end{align}
can be further simplified by eliminating the matrix $Z$ as follows.  Define
\[
M = W^\top\roundbr[\big]{\mathcal A^\top \dualvarone - \mathcal B^\top\dualvartwo + S - \Lxl}W + \alpha R.
\]
For fixed $\dualvarone$, $\dualvartwo$ and $S$, the optimal~$Z$ of problem~\eqref{eq:inner_problem} is the
same as
\begin{equation*}
\argmin_{Z \succeq 0} \norm{Z + M}^2,
\end{equation*}
which is the projection of $-M$ onto the cone of positive semidefinite matrices.
It is well known that the solution  $Z = \projpsd(-M) = -\projnsd(M)$ can be computed
from the eigenvalue decomposition of $M$,  see \cite{higham1988computing}. 

By eliminating $Z$ in~\eqref{eq:inner_problem} we get
\begin{align}
\label{eq:inner_problem_red}
\begin{split}
\max \quad&  b^\top \dualvarone - 
			\frac{1}{2\alpha}
				\norm[\Big]{\projpsd\roundbr[\big]{W^\top\roundbr[\big]{\mathcal A^\top \dualvarone - \mathcal B^\top\dualvartwo
					+ S - \Lxl}W + \alpha R} }^2 + \frac{\alpha}{2}\norm{R}^2\\
\textrm{s.t.} \quad& \dualvarone \in \mathbb{R}^{p}, \dualvartwo \geq 0, \ S \ge 0.
\end{split}
\end{align}
Let $F_{\alpha}(\dualvarone, \dualvartwo, S)$ 
denote the objective function of~\eqref{eq:inner_problem_red}.
We can then state the following properties of the objective function.
\begin{proposition}
Let $\alpha > 0$ and $F_{\alpha}(\dualvarone, \dualvartwo, S)$ be the objective function of~\eqref{eq:inner_problem_red}.
The function~$F_{\alpha}(\dualvarone, \dualvartwo, S)$ is concave and
differentiable,  with partial gradients given by
\begin{align*}
	\nabla_S F_{\alpha}(\dualvarone,\dualvartwo,S) &= -\frac{1}{\alpha}W \projpsd\roundbr[\big]{W^\top 
										\roundbr[\big]{\mathcal A^\top \dualvarone - \mathcal B^\top\dualvartwo + S - \Lxl}W + \alpha R} W^\top,\\
\nabla_{\dualvarone_i} F_{\alpha}(\dualvarone,\dualvartwo,S) &= b_i - 
		\frac{1}{\alpha}\inprod[\Big]{A_i, W \projpsd\roundbr[\Big]{W^\top\roundbr[\big]{\mathcal A^\top \dualvarone -\mathcal B^\top\dualvartwo + S - \Lxl}W + \alpha R }W^\top}\\
\nabla_{\dualvartwo_j} F_{\alpha}(\dualvarone,\dualvartwo,S) &= \frac{1}{\alpha}\inprod[\Big]{B_j, W \projpsd\roundbr[\Big]{W^\top\roundbr[\big]{\mathcal A^\top \dualvarone -\mathcal B^\top\dualvartwo + S - \Lxl}W + \alpha R }W^\top}
\end{align*}
for all $1 \leq i \leq p$, and $1 \leq j \leq q$.
\end{proposition}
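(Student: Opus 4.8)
The plan is to recognize $F_{\alpha}$ as a linear function plus a constant minus a positive multiple of a convex function composed with an affine map. Write
\[
\Phi(\dualvarone,\dualvartwo,S) = W^\top\roundbr[\big]{\mathcal A^\top\dualvarone-\mathcal B^\top\dualvartwo+S-\Lxl}W+\alpha R,
\]
which is affine in $(\dualvarone,\dualvartwo,S)\in\R^{p}\times\R^{q}\times\symmmat{2n+3}$, and put $\theta(X)=\tfrac12\norm{\projpsd(X)}^{2}$, so that
\[
F_{\alpha}(\dualvarone,\dualvartwo,S)=b^\top\dualvarone-\tfrac1\alpha\,\theta\roundbr[\big]{\Phi(\dualvarone,\dualvartwo,S)}+\tfrac\alpha2\norm{R}^{2}.
\]
The crux is the claim that $\theta$ is convex and continuously differentiable on $\symmmat{n+1}$ with $\nabla\theta(X)=\projpsd(X)$. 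I would deduce this from the Moreau decomposition $X=\projpsd(X)+\projnsd(X)$ (the cones $\symmmat{n+1}_{\succeq 0}$ and $\symmmat{n+1}_{\preceq 0}$ being mutually polar), which gives $\theta(X)=\tfrac12\norm{X-\projnsd(X)}^{2}=\tfrac12\operatorname{dist}\roundbr{X,\symmmat{n+1}_{\preceq 0}}^{2}$, i.e.\ $\theta$ is half the squared distance to a closed convex set; such a function is well known to be convex and $C^{1}$ with gradient $X\mapsto X-\projnsd(X)=\projpsd(X)$.

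Granting this, concavity of $F_{\alpha}$ is immediate: $\theta\circ\Phi$ is convex as the composition of a convex function with an affine map, $-\tfrac1\alpha(\theta\circ\Phi)$ is concave since $\alpha>0$, and adding the linear term $b^\top\dualvarone$ and the constant $\tfrac\alpha2\norm{R}^{2}$ preserves concavity; differentiability is the chain rule. For the partial gradients I would expand $\mathcal A^\top\dualvarone=\sum_{i=1}^{p}\dualvarone_{i}A_{i}$ and $\mathcal B^\top\dualvartwo=\sum_{j=1}^{q}\dualvartwo_{j}B_{j}$, so that the partial differentials of $\Phi$ are $H\mapsto W^\top HW$ in $S$, multiplication by $W^\top A_{i}W$ in $\dualvarone_{i}$, and multiplication by $-W^\top B_{j}W$ in $\dualvartwo_{j}$. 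Using the adjoint identity $\inprod{W^\top HW,Y}=\tr(W^\top HWY)=\tr(H\,WYW^\top)=\inprod{H,WYW^\top}$ together with the chain rule $\nabla(\theta\circ\Phi)=(\partial\Phi)^{\ast}\bigl[\projpsd(\Phi)\bigr]$, one obtains, with $\Phi$ short for $\Phi(\dualvarone,\dualvartwo,S)$,
\[
\nabla_{S}F_{\alpha}=-\tfrac1\alpha\,W\projpsd(\Phi)W^\top,\qquad
\nabla_{\dualvarone_{i}}F_{\alpha}=b_{i}-\tfrac1\alpha\inprod{A_{i},\,W\projpsd(\Phi)W^\top},
\]
and $\nabla_{\dualvartwo_{j}}F_{\alpha}=\tfrac1\alpha\inprod{B_{j},\,W\projpsd(\Phi)W^\top}$, the sign in the $\dualvartwo$-gradient coming from the $-\mathcal B^\top\dualvartwo$ term; these are the stated expressions.

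The one genuinely non-routine step is the $C^{1}$-smoothness of $\theta$, so if a self-contained argument is preferred over a citation I would prove $\nabla\theta(X)=\projpsd(X)$ directly. Since the metric projection onto a closed convex set is $1$-Lipschitz and distance-minimizing, $\projnsd(X)\in\symmmat{n+1}_{\preceq 0}$ gives $\theta(X+H)\le\tfrac12\norm{X+H-\projnsd(X)}^{2}=\theta(X)+\inprod{\projpsd(X),H}+\tfrac12\norm{H}^{2}$, while applying the same bound at $X$ with the point $\projnsd(X+H)$ and using nonexpansiveness of $\projpsd$ yields a matching lower estimate $\theta(X+H)\ge\theta(X)+\inprod{\projpsd(X),H}-\tfrac32\norm{H}^{2}$; hence $\theta(X+H)-\theta(X)-\inprod{\projpsd(X),H}=o(\norm{H})$, which is differentiability with the claimed gradient, and convexity follows since $\theta$ is a composition of the convex nonnegative map $\operatorname{dist}(\cdot,\symmmat{n+1}_{\preceq 0})$ with the convex nondecreasing map $t\mapsto\tfrac12 t^{2}$ on $\R_{\geq 0}$. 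Everything after that---the composition rules for concavity, the chain rule, and the adjoint identity---is bookkeeping, so I do not anticipate a real obstacle.
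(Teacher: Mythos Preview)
Your proof is correct and follows the same overall architecture as the paper's: write $F_\alpha$ as a linear term minus $\tfrac{1}{\alpha}$ times the convex function $\theta(X)=\tfrac12\norm{\projpsd(X)}^2$ composed with an affine map, then apply the chain rule with the appropriate adjoints to read off the partial gradients. The one substantive difference is how you justify that $\theta$ is convex and $C^1$ with $\nabla\theta=\projpsd$. The paper writes $\theta(X)=f(\lambda(X))$ with $f(x)=\tfrac12\norm{x_+}^2$ and cites the Borwein--Lewis theory of differentiable spectral functions; you instead use the Moreau decomposition to recognize $\theta$ as half the squared distance to the closed convex cone $\symmmat{n+1}_{\preceq 0}$, and even supply a direct two-sided estimate giving Fr\'echet differentiability. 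Your route is more self-contained and avoids the spectral-function machinery entirely, at the cost of a few extra lines; the paper's route is shorter if one is willing to cite. The remaining adjoint computations and chain-rule bookkeeping are identical in both arguments.
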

\begin{proof}
Let $f \colon \R^{n+1} \to \R$ be defined by $f(x) = \frac{1}{2}\Vert x
_+ \Vert^2$, where $x_+$ is the vector of length~$n+1$ with $\roundbr[\big]{x_+}_i = \max (0, x_i)$. Then $f$ is a convex and
differentiable function with gradient $\nabla f(x) = x_+$.
Note that the objective function $F_{\alpha}$ can be written as 
$$
F_{\alpha}(\dualvarone, \dualvartwo, S) = b^\top \dualvarone - \frac{1}{\alpha}g\left( W^\top 
										\roundbr[\big]{\mathcal A^\top \dualvarone -\mathcal B^\top\dualvartwo + S - \Lxl}W + \alpha R \right) + \frac{\alpha}{2}\norm{R}^2 
$$
where the function $g \colon \symmmat{n+1} \to \R$ is defined by 
$$
g(X) = \frac{1}{2}\norm{\projpsd(X)}^2 = \frac{1}{2}\sum_{i=1}^{n+1}\left( \max(0,\lambda_i(X))\right)^2 = f(\lambda(X)),
$$
where $\lambda(X)$ denotes the vector containing all eigenvalues of $X$.
From \cite[\S5.2]{borwein2005convex} we have that the function $g$ is convex and differentiable with gradient $\nabla g(X) =  \projpsd(X)$.  Hence, we can conclude that $F_{\alpha}$ is concave and differentiable. 

To obtain the gradients, we apply the chain rule.
For this, let~$D_1, D_2, D_3 \in \symmmat{n+1}$ where $D_1$ is independent of~$\dualvarone$, $D_2$ is independent of~$\dualvartwo$
and $D_3$ is independent of~$S$.
Let further 
$\mathcal{M}_1 \colon \R^p \to \symmmat{n+1}$, $\mathcal{M}_2 \colon \R^q \to \symmmat{n+1}$ and $\mathcal{N} \colon \symmmat{2n+3} \to \symmmat{n+1}$ be
linear operators defined by 
\begin{align*}
\mathcal{M}_1\dualvarone &= W^\top(A^\top \dualvarone)W = \sum_{i=1}^{p} \dualvarone_i\left(W^\top A_iW\right),\\
\mathcal{M}_2\dualvartwo &= W^\top(B^\top \dualvartwo)W = \sum_{j=1}^{q} \dualvartwo_j\left(W^\top B_jW\right) \text{ and}\\
\mathcal{N}(S) &= W^\top S W.
\end{align*}
Their adjoints are $\left(\mathcal{M}_1^*X \right)_i = \langle W^\top A_i W, X\rangle$, $(\mathcal M_2^*X)_j = \inprod{W^\top B_j W, X}$  and 
$\mathcal{N}^*(X) = W X W^\top$, respectively. 
Now, applying the chain rule, we get
\begin{align*}
	\nabla_\dualvarone\left[g\left(\mathcal{M}_1\dualvarone + D_1\right)\right] &= \mathcal{M}_1^*\nabla g(\mathcal{M}_1\dualvarone + D_1), \\
	\nabla_\dualvartwo\left[g\left(-\mathcal{M}_2\dualvartwo + D_2\right)\right] &= -\mathcal{M}_2^*\nabla g(-\mathcal{M}_2\dualvartwo + D_2) \text{ and}\\
	\nabla_S\left[g(\mathcal{N}(S)+D_3)\right] &= \mathcal{N}^*\nabla g(\mathcal{N}(S)+D_3),
\end{align*}
verifying our stated gradients.
\end{proof}
Using this proposition allows that for a fixed $\alpha > 0$ and $R$ the function $-F_{\alpha}$
is minimized using the L-BFGS-B algorithm~\cite{byrd1995limited}.

To make the algorithm efficient in practice, for a fixed set of cuts~$\mathcal T$ and $\alpha$, we perform only one iteration
of the augmented Lagrangian method,  i.e.,  we approximately solve the inner
problem~\eqref{eq:inner_problem_red},  update $R$  by~\eqref{eq:update_primal_matrix},  and proceed to search for
new violated inequalities in~$WRW^\top$ and add them to~$\mathcal T$.
If we find no or only a few violations, we reduce $\alpha$ and repeat
until the penalty parameter~$\alpha$ is smaller than a certain threshold.
At the end, we perform several augmented Lagrangian iterations with the same~$\alpha$
and without adding new cuts.

A similar solution technique was also used in~\cite{krislock2014improved}, where the penalty method is applied to the dual problem 
and the resulting nonlinear function is minimized using a quasi-Newton method, and in~\cite{hrga2023solving}, where the augmented 
Lagrangian method is used to solve an SDP relaxation strengthened by cutting planes. We have extended this methodology to facially 
reduced DNN programs.

\subsection{Post processing to derive a valid lower bound}
Our approach yields a dual solution $(\dualvarone, \dualvartwo, S, Z)$ of moderate precision. In particular,
the constraint~$W^\top \roundbr{\mathcal{A}^\top \dualvarone - \mathcal B^\top\dualvartwo + S - \Lxl}W + Z = 0$ does not necessarily hold.
However, because of weak duality, every feasible solution of the dual~\eqref{eq:dual_sdp-relax-facialred} gives a lower bound
on the optimal value of~\eqref{eq:DNN-CUTS-P-facialred}.
To derive a safe dual lower bound, we adapt the methods of~\cite{Cerulli2021} and~\cite{Jansson2007}, where the post-processing
of~\cite{Jansson2007} is for SDPs and was adapted by~\cite{Cerulli2021} for DNNs.
Both approaches are based on the following Lemma.
\begin{lemma}[Lemma 3.1 in~\cite{Jansson2007}]\label{lemma:jansson}
	Let $A, B \in \symmmat{2n+3}$ with
	\begin{equation*}
		\underline{b} \leq \lambda_{\min}(B), \quad 0 \leq \lambda_{\min}(A), \quad \lambda_{\max}(A) \leq \overline{a}
	\end{equation*}
	for some $\underline{b}, \overline{a} \in \R$. Then the inequality
	\[ \inprod{A,B} \geq \overline{a} \sum_{i:\lambda_{i}(B) < 0} \lambda_{i}(B) \geq \overline{a}\roundbr{2n+3}\min\set{0,\underline{b}} \]
	holds.
\end{lemma}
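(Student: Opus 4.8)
The statement is Lemma 3.1 of Jansson, so the plan is to give the short spectral argument. First I would diagonalize $B$: since $B$ is symmetric, write $B = \sum_{i=1}^{2n+3} \lambda_i(B)\, q_i q_i^\top$ for an orthonormal eigenbasis $q_1,\dots,q_{2n+3}$. Then $\inprod{A,B} = \tr(AB) = \sum_i \lambda_i(B)\, q_i^\top A q_i$. The key point is to bound each coefficient $q_i^\top A q_i$ using the Rayleigh quotient bounds $0 \le \lambda_{\min}(A) \le q_i^\top A q_i \le \lambda_{\max}(A) \le \overline a$, and then split the sum according to the sign of $\lambda_i(B)$.

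For the indices $i$ with $\lambda_i(B) \ge 0$, the term $\lambda_i(B)\, q_i^\top A q_i$ is non-negative (using $q_i^\top A q_i \ge \lambda_{\min}(A)\ge 0$), so dropping all these terms only decreases the sum:
\begin{equation*}
  \inprod{A,B} \;\ge\; \sum_{i:\lambda_i(B)<0} \lambda_i(B)\, q_i^\top A q_i.
\end{equation*}
For the remaining indices, $\lambda_i(B) < 0$, so multiplying the bound $q_i^\top A q_i \le \overline a$ by the negative number $\lambda_i(B)$ reverses the inequality: $\lambda_i(B)\, q_i^\top A q_i \ge \overline a\, \lambda_i(B)$. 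Summing over these indices gives the first claimed inequality
\begin{equation*}
  \inprod{A,B} \;\ge\; \overline a \sum_{i:\lambda_i(B)<0} \lambda_i(B).
\end{equation*}

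For the second inequality, note that every negative eigenvalue of $B$ is at least $\underline b$ (since $\underline b \le \lambda_{\min}(B)$), hence each is at least $\min\{0,\underline b\}$; summing the at most $2n+3$ such terms and using $\overline a \ge 0$ (which follows from $0 \le \lambda_{\min}(A) \le \lambda_{\max}(A) \le \overline a$) yields
\begin{equation*}
  \overline a \sum_{i:\lambda_i(B)<0} \lambda_i(B) \;\ge\; \overline a\,(2n+3)\min\{0,\underline b\}.
\end{equation*}

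There is essentially no obstacle here; the only things to watch are the sign bookkeeping when multiplying an inequality by the negative quantity $\lambda_i(B)$, and recording explicitly that $\overline a \ge 0$ so that the final scaling step preserves the direction of the inequality.
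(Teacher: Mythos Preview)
Your argument is correct and is exactly the standard spectral proof of Jansson's lemma. Note, however, that the paper does not give its own proof of this statement: it is quoted verbatim as Lemma~3.1 of \cite{Jansson2007} and used as a black box in the proof of Theorem~\ref{thm:postprocessing}, so there is no in-paper proof to compare against.
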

This leads us to the following theorem of~\cite{Jansson2007,Cerulli2021} adapted for facially reduced DNNs.
\begin{theorem}\label{thm:postprocessing}
	Consider the facially reduced primal problem~\eqref{eq:DNN-CUTS-P-facialred}, let $R^*$ be an optimal solution
	and let $p^*$ be its optimal value. Given $\dualvarone \in \R^p$, $\dualvartwo \in \R^q$ and $S \in \symmmat{2n+3}$
	with $\dualvartwo \geq 0$ and $S \geq 0$, set
	\begin{equation*}
		\widetilde{Z} = W^\top\roundbr[\big]{\Lxl - \mathcal{A}^\top \dualvarone + \mathcal B^\top \dualvartwo - S}W
	\end{equation*}
	and suppose that $\underline{z} \leq \lambda_{\min}(W\widetilde{Z}W^\top)$.
	Assume $\overline{r} \in \R$ such that $\overline{r} \geq \lambda_{\max}(WR^*W^\top)$ is known.
	Then the inequality
	\begin{equation}\label{eq:safe-dualbound}
		p^* \geq b^\top \dualvarone + \overline{r} \sum_{i: \lambda_i\roundbr[\big]{W\widetilde{Z}W^\top} < 0} \lambda_i(W\widetilde{Z}W^\top) \geq 
					b^\top \dualvarone + \overline{r}\roundbr{2n + 3}\min\set{0,\underline{z}}
	\end{equation}
	holds.
\end{theorem}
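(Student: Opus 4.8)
The plan is to prove a robust form of weak duality: the triple $(\dualvarone,\dualvartwo,S)$, together with the matrix $\widetilde Z$, need not be exactly dual feasible for~\eqref{eq:dual_sdp-relax-facialred}, but the amount by which dual feasibility is violated is controlled by the negative eigenvalues of $W\widetilde Z W^{\top}$, which \Cref{lemma:jansson} converts into an additive correction term. Throughout, write $\Yxl^{*}=WR^{*}W^{\top}$ for the lifted primal optimum, so that $\Yxl^{*}\succeq 0$, $\Yxl^{*}\ge 0$, $\mathcal A(\Yxl^{*})=b$, $\mathcal B(\Yxl^{*})\le 0$ and $p^{*}=\langle W^{\top}\Lxl W,R^{*}\rangle=\langle\Lxl,\Yxl^{*}\rangle$; also set $\widehat Z=\Lxl-\mathcal A^{\top}\dualvarone+\mathcal B^{\top}\dualvartwo-S\in\symmmat{2n+3}$, so that $\widetilde Z=W^{\top}\widehat Z W$.

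First I would split $p^{*}$ by the adjoint identities for $\mathcal A$ and $\mathcal B$,
\[
p^{*}=\langle\Lxl,\Yxl^{*}\rangle=\langle\widehat Z,\Yxl^{*}\rangle+\dualvarone^{\top}\mathcal A(\Yxl^{*})-\dualvartwo^{\top}\mathcal B(\Yxl^{*})+\langle S,\Yxl^{*}\rangle,
\]
and bound the last three terms: $\dualvarone^{\top}\mathcal A(\Yxl^{*})=\dualvarone^{\top}b=b^{\top}\dualvarone$ by primal feasibility, $-\dualvartwo^{\top}\mathcal B(\Yxl^{*})\ge 0$ because $\dualvartwo\ge 0$ and $\mathcal B(\Yxl^{*})\le 0$, and $\langle S,\Yxl^{*}\rangle\ge 0$ because $S$ and $\Yxl^{*}$ are entrywise nonnegative. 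Hence $p^{*}\ge b^{\top}\dualvarone+\langle\widehat Z,\Yxl^{*}\rangle$.

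Next I would transport the residual term onto $\symmmat{2n+3}$ so that \Cref{lemma:jansson} becomes applicable. Since the columns of $W$ are orthonormal we have $W^{\top}W=I_{n+1}$, and therefore
\[
\langle\widehat Z,\Yxl^{*}\rangle=\langle W^{\top}\widehat Z W,R^{*}\rangle=\langle\widetilde Z,R^{*}\rangle=\langle W\widetilde Z W^{\top},WR^{*}W^{\top}\rangle=\langle W\widetilde Z W^{\top},\Yxl^{*}\rangle.
\]
Applying \Cref{lemma:jansson} with $A=\Yxl^{*}$ and $B=W\widetilde Z W^{\top}$ is now legitimate: $0\le\lambda_{\min}(\Yxl^{*})$ since $\Yxl^{*}\succeq 0$, $\lambda_{\max}(\Yxl^{*})\le\overline{r}$ (so in particular $\overline{r}\ge 0$) by assumption, and $\underline{z}\le\lambda_{\min}(W\widetilde Z W^{\top})$ by assumption. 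The lemma gives
\[
\langle W\widetilde Z W^{\top},\Yxl^{*}\rangle\ge\overline{r}\sum_{i:\lambda_{i}(W\widetilde Z W^{\top})<0}\lambda_{i}(W\widetilde Z W^{\top})\ge\overline{r}(2n+3)\min\{0,\underline{z}\},
\]
and chaining this with $p^{*}\ge b^{\top}\dualvarone+\langle W\widetilde Z W^{\top},\Yxl^{*}\rangle$ yields precisely~\eqref{eq:safe-dualbound}.

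I do not anticipate a genuine obstacle here; the argument is weak duality made tolerant of an inexact dual iterate. The points deserving care are the sign bookkeeping---that $\dualvartwo\ge 0$ together with $\mathcal B(\Yxl^{*})\le 0$ orients the inequality correctly, and that $\overline{r}\ge\lambda_{\max}(\Yxl^{*})\ge 0$ so that multiplying the (nonpositive) eigenvalue sum by $\overline{r}$ preserves the direction---and the repeated use of $W^{\top}W=I_{n+1}$ to move between the reduced matrix $R^{*}$ and the lifted matrix $\Yxl^{*}=WR^{*}W^{\top}$, which is exactly what links the objects $\widetilde Z$ and $W\widetilde Z W^{\top}$ that appear in the statement.
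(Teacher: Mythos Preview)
Your argument is correct and is essentially the same as the paper's proof: both derive $p^{*}\ge b^{\top}\dualvarone+\langle\widetilde Z,R^{*}\rangle$ via primal feasibility and the sign conditions on $\dualvartwo$, $S$, then rewrite $\langle\widetilde Z,R^{*}\rangle=\langle W\widetilde Z W^{\top},\Yxl^{*}\rangle$ using $W^{\top}W=I_{n+1}$ and apply \Cref{lemma:jansson}. Your explicit introduction of $\widehat Z$ and the remark that $\overline{r}\ge 0$ are cosmetic additions that do not change the route.
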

\begin{proof}
	Let $R^*$ be optimal for~\eqref{eq:DNN-CUTS-P-facialred}, then
	\begin{align*}
		\inprod[\big]{W^\top \Lxl W,R^*} - b^\top \dualvarone &= \inprod[\big]{\Lxl, WR^*W^\top} - \inprod[\big]{\mathcal{A}\roundbr{WR^*W^\top}, \dualvarone}\\
				&= \inprod[\big]{\Lxl - \mathcal{A}^\top \dualvarone, WR^*W^\top} \\
				&= \inprod[\big]{W^\top\roundbr[\big]{\Lxl - \mathcal{A}^\top \dualvarone}W, R^*} \\
				&= \inprod[\big]{W^\top (S - \mathcal B^\top\dualvartwo)W + \widetilde{Z}, R^*}\\
				&= \inprod[\big]{S, WR^*W^\top} - \inprod[\big]{\mathcal B^\top \dualvartwo, WR^*W^\top} + \inprod[\big]{\widetilde{Z},R^*}\\
				&\geq - \inprod[\big]{\mathcal B^\top \dualvartwo, WR^*W^\top} + \inprod[\big]{\widetilde{Z}, R^*}\\
				&= -\dualvartwo^\top \mathcal B\roundbr[\big]{WR^*W^\top} + \inprod[\big]{\widetilde{Z}, R^*} \\
				&\geq \inprod[\big]{\widetilde{Z}, R^*}
	\end{align*}
	holds, where the first inequality follows from the fact that $S \geq 0$ and $WR^*W^\top \geq 0$, and the second inequality
	follows from the fact that $\dualvartwo \geq 0$ and $\mathcal B\roundbr[\big]{WR^*W^\top} \leq 0$.
	Consequently, with \Cref{lemma:jansson} the inequality
	\begin{align*}
		p^* &= \inprod[\big]{W^\top \Lxl W, R^*} \\
			&\geq b^\top \dualvarone + \inprod[\big]{\widetilde{Z}, R^*}\\
			&= b^\top \dualvarone + \inprod[\big]{W\widetilde{Z}W^\top, WR^*W^\top}\\
			& \geq b^\top \dualvarone + \overline{r} \sum_{i: \lambda_i\roundbr[\big]{W\widetilde{Z}W^\top} < 0} \lambda_i(W\widetilde{Z}W^\top) \geq 
			b^\top \dualvarone + \overline{r}\roundbr{2n + 3}\min\set{0,\underline{z}}
	\end{align*}
	holds.
\end{proof}

Given that we know an upper bound on the largest eigenvalue of $WR^*W^\top$, where~$R^*$ is an optimal solution
of~\eqref{eq:DNN-CUTS-P-facialred} and hence also feasible for~\eqref{eq:sdp-relax-facialred},
we can compute a safe lower bound on~\eqref{eq:DNN-CUTS-P-facialred} by applying \Cref{thm:postprocessing} 
in a post-processing step.
To this end, we state an upper bound on the largest eigenvalue of all feasible solutions~$\Yxl$ of~\eqref{eq:sdp-relax-facialred}
in the next Proposition.
Again, note that $WR^*W^\top = \Yxl^*$ is optimal for~\eqref{eq:sdp-relax-2np3} under the
assumption that $R^*$ is optimal for~\eqref{eq:sdp-relax-facialred}.
\begin{proposition}\label{prop:ubtrY}
	Let $\Yxl = WRW^\top$ be a feasible solution of~\eqref{eq:sdp-relax-facialred}, then
	\[\lambda_{\max}(\Yxl) \leq \tr(\Yxl) \leq \Big\lfloor \frac n 2 \Big\rfloor^2 + n\]
	holds.
\end{proposition}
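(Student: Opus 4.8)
The plan is to treat the two inequalities in the chain separately.

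For the inequality $\lambda_{\max}(\Yxl) \le \tr(\Yxl)$: since $\Yxl = WRW^\top$ with $R \succeq 0$, the matrix $\Yxl$ is positive semidefinite, so every eigenvalue is non-negative and the largest one is bounded by the sum of all of them, which equals $\tr(\Yxl)$.

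For the bound $\tr(\Yxl) \le \lfloor n/2\rfloor^2 + n$, I would expand the trace along the block structure of $\Yxl$,
\begin{equation*}
\tr(\Yxl) = \tr(\Yb^{11}) + \tr(\Yb^{22}) + \Yb^{33} + \Yb^{44} + \rho,
\end{equation*}
and bound each term using the structural results already established. The crucial point is that the constraint $\diag(\Yb^{12}) = 0$ present in~\eqref{eq:sdp-relax-facialred}, combined with~\Cref{prop:Y11plusY21}, already forces $\diag(\Yb^{11}) = \yb^1$ and $\diag(\Yb^{22}) = \yb^2$ on every feasible point, so there is no need to invoke~\eqref{eq:diagonalcon}. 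Consequently $\tr(\Yb^{11}) = e_n^\top\yb^1 = 1$ by the affine equality of~\eqref{eq:sdp-relax-facialred}, while $\tr(\Yb^{22}) = e_n^\top\yb^2 = n\rho - 1 \le n - 1$ using $\yb^2 = \rho e_n - \yb^1$ and $\rho \le 1$ from~\Cref{prop:bounds-rho}. Together with the bounds $\Yb^{33} \le \lfloor n/2\rfloor^2 - \lfloor n/2\rfloor$, $\Yb^{44} \le \lfloor n/2\rfloor - 1$ from~\Cref{prop:ub-entries-Y} and $\rho \le 1$, summing gives
\begin{equation*}
\tr(\Yxl) \le 1 + (n-1) + \Big(\Big\lfloor\tfrac n2\Big\rfloor^2 - \Big\lfloor\tfrac n2\Big\rfloor\Big) + \Big(\Big\lfloor\tfrac n2\Big\rfloor - 1\Big) + 1 = \Big\lfloor\tfrac n2\Big\rfloor^2 + n,
\end{equation*}
which is the claimed estimate.

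I do not expect a genuine obstacle; the argument is essentially bookkeeping over~\Cref{prop:bounds-rho},~\Cref{prop:Y11plusY21}, and~\Cref{prop:ub-entries-Y}. The one place that deserves care is the observation that the diagonal identities $\diag(\Yb^{11}) = \yb^1$ and $\diag(\Yb^{22}) = \yb^2$ hold automatically as a consequence of $\diag(\Yb^{12}) = 0$, since that is exactly what lets us pin $\tr(\Yb^{11})$ down to $1$ and rewrite $\tr(\Yb^{22})$ in terms of $\rho$.
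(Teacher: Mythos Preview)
Your proof is correct and follows essentially the same block-by-block trace decomposition as the paper. The only minor difference is that you obtain $\diag(\Yb^{11})=\yb^1$ and $\diag(\Yb^{22})=\yb^2$ by specializing \Cref{prop:Y11plusY21} to the diagonal together with $\diag(\Yb^{12})=0$, whereas the paper re-derives $\tr(\Yb^{11})+\tr(\Yb^{22})=n\rho$ directly from the identity $(M\Yxl M^\top)_{2+i,2+i}=0$; both routes yield the same sum and the same final bound.
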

\begin{proof}
	Since the matrix $\Yxl$ is positive semidefinite, its eigenvalues are non-negative and 
	therefore the trace gives an upper bound on the largest eigenvalue. 
	
	From \Cref{prop:bounds-rho,prop:ub-entries-Y} we already know that
	$\rho \leq 1$ and $\Yb^{33} + \Yb^{44} \leq \big\lfloor \frac n 2 \big\rfloor^2 - 1$ holds for every feasible solution.
	For the submatrices $\Yb^{11}$ and $\Yb^{22}$ of $\Yxl$, consider for~$i \in [n]$ the equations
	\begin{align*}
		0 &= (M\Yxl M^\top)_{2+i,2+i} = \Yb_{ii}^{11} + \Yb_{ii}^{22} + \rho + 2 \Yb_{ii}^{12} - 2\yb_i^1 - 2\yb_i^2.
	\end{align*}
	Since $\Yb_{ii}^{12} = 0$, this yields the equality $\Yb_{ii}^{11} + \Yb_{ii}^{22} = 2 (\yb_i^1 + \yb_i^2) - \rho$.
	Summing over all indices~$i \in [n]$ gives
	\begin{align*}
	\tr(\Yb^{11}) + \tr(\Yb^{22}) &= \sum_{i = 1}^n (\Yb_{ii}^{11} + \Yb_{ii}^{22}) = 2 (e_n^\top \yb^1 + e_n^\top \yb^2) - n\rho\\
	&= 2 \big(e_n^\top \yb^1 + e_n^\top(\rho e_n - \yb^1)\big) - n\rho = n\rho, 
	\end{align*}
	where we used the equality $\yb^2 = \rho e_n - \yb^1$ from~\Cref{prop:bounds-rho}.
	To sum up, we obtain
	\begin{align*}
		\tr(\Yxl) &= \tr(\Yb^{11}) + \tr(\Yb^{22}) + \Yb^{33} + \Yb^{44} + \rho \\
				  &\leq (n + 1) \rho + \Big\lfloor \frac n 2 \Big\rfloor^2 - 1 \leq \Big\lfloor \frac n 2 \Big\rfloor^2 + n,
	\end{align*}
	which is the claimed upper bound.
\end{proof}

We summarize our bounding routine in Algorithm~\ref{alg:bounding_routine}. Note that separating the BQP inequalities \eqref{subeq:bqpb} can be done in $\mathcal{O}(n^3)$ by complete enumeration. 
\begin{algorithm}
\small
\caption{Computation of DNN bound with BQP inequalities}
\label{alg:bounding_routine}
\begin{algorithmic}
\State{\textbf{input:}} $\alpha > 0$, matrices $\Lxl$,  $W$, initial $\Yxl$, $\mathcal{T} = \emptyset$,
$n_{bqp}$ - maximum number of BQP inequalities to be added in one iteration.
\While{$\alpha$ is sufficiently large}
\State Get an approximate maximizer  of \eqref{eq:inner_problem_red} by using L-BFGS-B method.
\State Update $R$ and $\Yxl$ according to \eqref{eq:update_primal_matrix} and \eqref{eq:construct_primal_matrix}.
\State Remove inactive BQP inequalities from $\mathcal{T}$.
\State Add the $n_{bqp}$ most violated BQP inequalities for $\Yxl$ to $\mathcal{T}$.
\If{the number of new violated inequalities added is below a threshold}
    \State Reduce $\alpha$.
\EndIf
\EndWhile
\State Perform extra augmented Lagrangian iterations (with the previous $\alpha$, without adding new cuts).
\State Compute $\widetilde{Z} = W^\top\roundbr[\big]{\Lxl - \mathcal{A}^\top \dualvarone + \mathcal B^\top \dualvartwo - S}W$.
\State Use \eqref{eq:safe-dualbound} with $\bar{r} = \big\lfloor\frac n 2\big\rfloor^2 + n$ to compute a valid lower bound LB.
\State{\textbf{output:}} LB
\end{algorithmic}
\end{algorithm}

\section{Numerical results}\label{sec:numericalresults}
We implemented Algorithm~\ref{alg:bounding_routine} in Julia~\cite{bezanson2017julia} version 1.10.0.
For computing the basic DNN bound~\eqref{eq:sdp-relax-np1} we are using Mosek~v10.2.3~\cite{mosek10.2.3julia} with the
modeling language JuMP~\cite{Lubin2023}.
We use the Julia package LBFGSB~v0.4.1~\cite{LBFGSBjulia} as a wrapper to the \mbox{L-BFGS-B} solver.
All computations were carried out on an AMD EPYC 7343 with 16~cores 
with~4.00~GHz and 1024GB~RAM, operated under Debian GNU/Linux~11.
The code is available at the arXiv page of this paper and at~\url{https://github.com/melaniesi/CheegerConvexificationBounds.jl}.

The initial value of $\alpha$ is set to 1, and we stop reducing it once the penalty parameter~$\alpha$ is smaller than $10^{-5}$.
We start to add violated cuts after the first five iterations.
In each iteration, we then add the~$500$ most violated cuts, where we consider a BQP inequality
to be violated if the violation is at least~$10^{-3}$.
In case we find less than~50 new violated cuts, we reduce $\alpha$ by a factor of~$\frac{3}{5}$.
Cuts are removed from~$\mathcal T$ if the corresponding dual value is smaller than~$10^{-5}$.
The cuts are added for the upper left~$n \times n$ submatrix~$\Yb^{11}$ only.
We perform at most~500 augmented Lagrangian iterations additionally
at the end with constant~$\alpha$ and stop as soon as
the correction of the post processing is smaller than~$0.01$.
We set the parameters of the L-BFGS-B solver to \texttt{maxiter=2000}, \texttt{factr=1e8}, and
\mbox{\texttt{m = 10}}.

As benchmark instances, we use all instances already
considered in~\cite{gupte2024journal} with less than~400~vertices
and larger grlex and grevlex instances, cf.~\cite{gupte2019dantzigfigures}. 
In~\cite{gupte2024journal} only grlex and grevlex instances with up to~92~vertices
were considered, whereas we include all instances with fewer than~400~vertices.
The benchmark set consists of graphs of random~0/1-polytopes, graphs of
grlex and grevlex polytopes, instances from the 10\textsuperscript{th}~DIMACS challenge and some
network instances.
The edge expansion was computed in~\cite{gupte2024journal} for all instances considered therein.
It was shown in~\cite{gupte2019dantzigfigures} that the edge expansion of all
grlex instances is~1.
Upper bounds, conjectured to be the edge expansion of the larger grevlex instances
are taken from~\cite{Rescher2023}.

\subsection{Comparing \eqref{eq:sdp-relax-np1} with~\eqref{eq:sdp-relax-2np3}}
In \Cref{sec:simple} we proved that our new relaxation~\eqref{eq:sdp-relax-2np3} is at least as good as the
basic relaxation~\eqref{eq:sdp-relax-np1}. We now compute these two bounds
for several instances in order to show the significant dominance of
our new relaxation.
Table~\ref{tab:comp-dnnsmall} is structured as follows.
In the first three columns, we list the name of the instance, the number of vertices~$n$, and
the number of edges~$m$. The edge expansion, or if not known, an upper bound on it, is reported in the fourth
column~UB of the table.
In column~5 we report the lower bound~\eqref{eq:sdp-relax-np1} computed with Mosek, and in column~6 the
relative gap of this lower bound to the upper bound.
The relative gap is computed with the formula
\begin{equation}\label{eq:gapformula}
	\frac{\text{UB} - \text{LB}}{\text{UB}},
\end{equation}
where LB denotes a lower bound on the edge expansion, and UB denotes the upper bound on the
edge expansion.
In the last two columns of Table~\ref{tab:comp-dnnsmall} we report the bound~\eqref{eq:sdp-relax-2np3}
approximated by~\Cref{alg:bounding_routine} without BQP inequalities and without the diagonal constraint~\eqref{eq:diagonalcon}
and its relative gap to the upper bound.

Note that all reported bounds are rounded to two decimal places.

Table~\ref{tab:comp-dnnsmall} shows that except for the instance malariagenes-HVR1,
the DNN relaxation~\eqref{eq:sdp-relax-2np3} of
dimension~$2n+3$ drastically reduces the relative gap to the upper bound compared to the
basic DNN relaxation~\eqref{eq:sdp-relax-np1} of dimension~$n+1$.
The relative gap of~\eqref{eq:sdp-relax-np1}
is between~21\% and~72\% for all of our instances, with an average relative gap of~46.9\%.
The average relative gap of~\eqref{eq:sdp-relax-2np3} is~8.6\%.
For~19~instances, the relative gap of~\eqref{eq:sdp-relax-2np3} is at most~1\%.
Especially for most of the grlex instances, we were able to obtain a lower bound that is
close to the optimum.
For~9~instances, this bound rounded to two decimal places is equal to the rounded
optimal values.
Still, there are several benchmark instances with a relative gap of~\eqref{eq:sdp-relax-2np3}
being above~10\%.

\begingroup
	\setlength{\tabcolsep}{7pt}
	\begin{longtable}{lrrr|rrrr}
		\caption{Comparison of DNN relaxations without additional constraints.}\label{tab:comp-dnnsmall}\\
		\toprule
		\multicolumn{4}{c}{Instance} & \multicolumn{2}{c}{\eqref{eq:sdp-relax-np1}} & \multicolumn{2}{c}{\eqref{eq:sdp-relax-2np3}} \\
		\cmidrule(r){1-4} \cmidrule(lr){5-6} \cmidrule(l){7-8}
		 & \multicolumn{1}{c}{$n$} & \multicolumn{1}{c}{$m$} & \multicolumn{1}{l}{UB}
		 & \multicolumn{1}{l}{bound} & \multicolumn{1}{l}{gap (\%)} & 
		 \multicolumn{1}{l}{bound} & \multicolumn{1}{l}{gap (\%)} \\\midrule \endfirsthead
		 \caption*{Table~\ref{tab:comp-dnnsmall} (cont.): Comparison of DNN relaxations without additional constraints.}\\
		 \toprule
		 \multicolumn{4}{c}{Instance} & \multicolumn{2}{c}{\eqref{eq:sdp-relax-np1}} & \multicolumn{2}{c}{\eqref{eq:sdp-relax-2np3}} \\
		 \cmidrule(r){1-4} \cmidrule(lr){5-6} \cmidrule(l){7-8}
		 & \multicolumn{1}{c}{$n$} & \multicolumn{1}{c}{$m$} & \multicolumn{1}{l}{UB}
		& \multicolumn{1}{l}{bound} & \multicolumn{1}{l}{gap (\%)} & 
		\multicolumn{1}{l}{bound} & \multicolumn{1}{l}{gap (\%)} \\\midrule\endhead
		\bottomrule \endlastfoot
		\csvreader[head to column names, late after line = \\]{results-randpolytope.csv}{n=\size,m=\edges}
		{\instance & \size & \edges & \ubopt & \lbdnnsmall & \gapdnnsmall & \lbdnn &  \gapdnn}
		\vspace*{-0.25cm}& & & & & & &\\ % no & for discontinued vertical line
		\csvreader[head to column names, late after line = \\]{results-grlex.csv}{n=\size,m=\edges}
		{\instance & \size & \edges & \ubopt & \lbdnnsmall & \gapdnnsmall & \lbdnn &  \gapdnn}
		\vspace*{-0.25cm}& & & & & & &\\ % no & for discontinued vertical line
		\csvreader[head to column names, late after line = \\]{results-grevlex.csv}{n=\size,m=\edges}
		{\instance & \size & \edges & \ubopt & \lbdnnsmall & \gapdnnsmall & \lbdnn &  \gapdnn}
		\vspace*{-0.25cm}& & & & & & &\\ % no & for discontinued vertical line
		\csvreader[head to column names, late after line = \\]{results-dimacs.csv}{n=\size,m=\edges}
		{\instance & \size & \edges & \ubopt & \lbdnnsmall & \gapdnnsmall & \lbdnn &  \gapdnn}
		\vspace*{-0.25cm}& & & & & & &\\ % no & for discontinued vertical line
		\csvreader[head to column names, late after line = \\]{results-network.csv}{n=\size,m=\edges}
		{\instance & \size & \edges & \ubopt & \lbdnnsmall & \gapdnnsmall & \lbdnn &  \gapdnn}
	\end{longtable}
\endgroup

\subsection{Detailed numerical tests of \eqref{eq:DNN-CUTS-P-facialred}}
In the following we now compare~\eqref{eq:sdp-relax-2np3} to relaxation~\eqref{eq:DNN-CUTS-P-facialred}
including additional scaled BQP inequalities, both computed approximately with our \Cref{alg:bounding_routine}.
We omit instances from the table presenting computational results in this subsection
where we could not get an improvement from our cutting planes.
For several of these instances, the bound from~\eqref{eq:sdp-relax-2np3} is already close to the optimal value, hence further improvement from
cutting planes cannot be expected.

Table~\ref{tab:comp-all} is structured as follows.
The first four columns describe the instance, i.e., the name of the instance, the number of vertices~$n$,
the number of edges~$m$, and an upper bound~UB on the edge expansion are given.
In the next~3~columns, we report computational results for computing~\eqref{eq:sdp-relax-2np3}.
Column~5 lists the lower bound on~$h(G)$ obtained by \Cref{alg:bounding_routine}, in column~6
the number of edges~$m$, and an upper bound~UB on the edge expansion are given.
In the next~3~columns, we report computational results for computing~\eqref{eq:sdp-relax-2np3}.
Column~5 lists the lower bound on~$h(G)$ obtained by \Cref{alg:bounding_routine}, column~6 displays
the relative gap~\eqref{eq:gapformula}, and in column~7 the wall-clock time in seconds needed
to compute the lower bound is reported.
The same numbers (bound, relative gap, computation time) for computing~\eqref{eq:DNN-CUTS-P-facialred} 
with \Cref{alg:bounding_routine} are shown in the subsequent~3 columns.
In the second to last column of the table, we report the number of cutting planes left in the last iteration
of the algorithm, i.e., the number of cuts that were added and not removed by the algorithm.
The total number of iterations needed by the algorithm to compute the DNN relaxation strengthened by
scaled BQP cuts is given in the last column of the table.

Table~\ref{tab:comp-all} shows that for all graphs of random 0/1-polytopes, except~3~instances, the lower
bound \eqref{eq:DNN-CUTS-P-facialred} yields a relative gap below~1\%.
The computation time for all instances from graphs of random 0/1-polytopes
was less than 49~minutes and~11.4~minutes on average.
For the grlex instances, we were able to obtain for~5 further instances a lower bound that
is, rounded to two decimal places, equal to the optimum by adding cutting planes to 
the DNN relaxation.
Considering the grevlex instances, there is only one instance where \eqref{eq:DNN-CUTS-P-facialred} yields a
relative gap below~1\% compared to a relative gap of~12.3\% from~\eqref{eq:sdp-relax-2np3}.
Nevertheless, also for this class of benchmark instances, we were able to significantly
reduce the relative gap to the upper bound/optimum by adding BQP inequalities, namely from an
average of~15.8\% to~4.4\%.
A similar observation can be made from the computational results on 
the DIMACS and network instances.
In total, for~10~instances~\eqref{eq:DNN-CUTS-P-facialred} rounded to~2~decimal places
closes the gap to the optimum~$h(G)$.

Overall, our computational results demonstrate
that in case the cutting planes improve the DNN relaxation of the edge expansion,
the improvement is substantial.
We were able to compute strong lower bounds on the edge expansion
by computing \eqref{eq:DNN-CUTS-P-facialred}, a DNN of dimension up to~$761 \times 761$ and up to~$4\,272$
cutting planes, with \Cref{alg:bounding_routine} in less than~69~minutes.

\begin{landscape}
\begingroup
	\setlength{\tabcolsep}{6.2pt}
	\begin{longtable}{lrrr|rrrrrrrr}
		\caption{Computational results of \Cref{alg:bounding_routine}.}\label{tab:comp-all}\\
		\toprule
		\multicolumn{4}{c}{Instance} & \multicolumn{3}{c}{\eqref{eq:sdp-relax-2np3}} & \multicolumn{5}{c}{\eqref{eq:DNN-CUTS-P-facialred}}\\
		\cmidrule(r){1-4} \cmidrule(lr){5-7} \cmidrule(l){8-12}
		 & \multicolumn{1}{c}{$n$} & \multicolumn{1}{c}{$m$} & \multicolumn{1}{l}{UB}
		 & \multicolumn{1}{l}{bound} & \multicolumn{1}{l}{gap (\%)} &  \multicolumn{1}{l}{time (s)} &
		 \multicolumn{1}{l}{bound} & \multicolumn{1}{l}{gap (\%)} & \multicolumn{1}{l}{time (s)} &
		 \multicolumn{1}{l}{cuts} & \multicolumn{1}{l}{iterations}\\\midrule \endfirsthead
		 \caption*{Table~\ref{tab:comp-all} (cont.): Computational results of \Cref{alg:bounding_routine}.}\label{tab:comp-all}\\
		 \toprule
		\multicolumn{4}{c}{Instance} & \multicolumn{3}{c}{\eqref{eq:sdp-relax-2np3}} & \multicolumn{5}{c}{\eqref{eq:DNN-CUTS-P-facialred}}\\
		\cmidrule(r){1-4} \cmidrule(lr){5-7} \cmidrule(l){8-12}
		 & \multicolumn{1}{c}{$n$} & \multicolumn{1}{c}{$m$} & \multicolumn{1}{l}{UB}
		 & \multicolumn{1}{l}{bound} & \multicolumn{1}{l}{gap (\%)} &  \multicolumn{1}{l}{time (s)} &
		 \multicolumn{1}{l}{bound} & \multicolumn{1}{l}{gap (\%)} & \multicolumn{1}{l}{time (s)} &
		 \multicolumn{1}{l}{cuts} & \multicolumn{1}{l}{iterations}\\\midrule \endhead
		\bottomrule \endlastfoot
		\csvreader[head to column names, late after line = \\]{results2-randpolytope.csv}{n=\size,m=\edges}
		{\instance & \size & \edges & \ubopt & \lbdnn &  \gapdnn & \timednn & \lb & \gap & \time & \ncuts & \iterations}
		\vspace*{-0.25cm} &  &  &  &  &   &  &  &  &  &  & \\
		\csvreader[head to column names, late after line = \\]{results2-grlex.csv}{n=\size,m=\edges}
		{\instance & \size & \edges & \ubopt & \lbdnn &  \gapdnn & \timednn & \lb & \gap & \time & \ncuts & \iterations}
		\vspace*{-0.25cm} &  &  &  &  &   &  &  &  &  &  & \\
		\csvreader[head to column names, late after line = \\]{results2-grevlex.csv}{n=\size,m=\edges}
		{\instance & \size & \edges & \ubopt & \lbdnn &  \gapdnn & \timednn & \lb & \gap & \time & \ncuts & \iterations}
		\vspace*{-0.25cm} &  &  &  &  &   &  &  &  &  &  & \\
		\csvreader[head to column names, late after line = \\]{results2-dimacs.csv}{n=\size,m=\edges}
		{\instance & \size & \edges & \ubopt & \lbdnn &  \gapdnn & \timednn & \lb & \gap & \time & \ncuts & \iterations}
		\vspace*{-0.25cm} &  &  &  &  &   &  &  &  &  &  & \\
		\csvreader[head to column names, late after line = \\]{results2-network.csv}{n=\size,m=\edges}
		{\instance & \size & \edges & \ubopt & \lbdnn &  \gapdnn & \timednn & \lb & \gap & \time & \ncuts & \iterations}
	\end{longtable}
\endgroup
\end{landscape}

\section{Conclusion}\label{sec:conclusion}
This work demonstrates the effectiveness of the convexification technique by He, Liu, and Tawarmalani~\cite{he2024convexification} in addressing the challenging problem of computing the edge expansion of a graph. By formulating the edge expansion as a completely positive program and introducing a doubly non-negative relaxation enhanced by cutting planes, we have developed a new approach for obtaining strong lower bounds. The augmented Lagrangian algorithm further improves the computational efficiency of solving this relaxation. Additionally, we provide a post processing routine to derive valid lower bounds. Numerical results show that our approach yields tight bounds and performs efficiently even for large graphs. For several instances, the rounded bound is equal to the optimal value or yields a very small relative gap.
Some possible extensions of our approach would be to further strengthen the relaxation by adding McCormick inequalities.

\bibliographystyle{hplain}
\bibliography{references}

\appendix

\end{document}